\newtheorem{theorem}{Theorem}[section]
\newtheorem{claim}{}[theorem]
\newtheorem{lemma}[theorem]{Lemma}
\newtheorem{conjecture}[theorem]{Conjecture}
\theoremstyle{definition}
\newcommand{\bF}{\mathbb F}
\newcommand{\bZ}{\mathbb Z}
\newcommand{\bQ}{\mathbb Q}
\newcommand{\cF}{\mathcal{F}}
\newcommand{\cH}{\mathcal{H}}
\newcommand{\cL}{\mathcal{L}}
\newcommand{\fE}{\mathbf{E}}
\newcommand{\fM}{\fE}
\newcommand{\cM}{\mathcal{M}}
\newcommand{\cO}{\mathcal{O}}
\newcommand{\cP}{\mathcal{P}}
\newcommand{\cX}{\mathcal{X}}
\newcommand{\bE}{\mathbf{E}}
\newcommand{\bI}{\mathbf{I}}
\newcommand{\lex}{\mathrm{lex}}
\DeclareMathOperator{\si}{si}
\DeclareMathOperator{\cl}{cl}
\DeclareMathOperator{\PG}{PG}
\DeclareMathOperator{\Ex}{Ex}
\DeclareMathOperator{\GF}{GF}
\DeclareMathOperator{\AG}{AG}
\newcommand{\wh}{\widehat}
\newcommand{\elem}{\varepsilon}
\newcommand{\del}{\!\setminus\!}
\newcommand{\con}{/}
\newcommand{\whp}{\widehat{\varphi}}
\begin{document}
\sloppy

\title[Exponential growth rates]{The Densest Matroids in Minor-Closed Classes with Exponential Growth Rate}

\author[Geelen]{Jim Geelen}
\address{Department of Combinatorics and Optimization,
University of Waterloo, Waterloo, Canada}
\thanks{ This research was partially supported by a grant from the
Office of Naval Research [N00014-10-1-0851].}

\author[Nelson]{Peter Nelson}

\begin{abstract}
 The \emph{growth rate function} for a nonempty minor-closed class of matroids $\cM$ is the function $h_{\cM}(n)$ whose value at an integer $n \ge 0$ is defined to be the maximum number of elements in a simple matroid in $\cM$ of rank at most $n$. Geelen, Kabell, Kung and Whittle showed that, whenever $h_{\cM}(2)$ is finite, the function $h_{\cM}$ grows linearly, quadratically or exponentially in $n$ (with base equal to a prime power $q$), up to a constant factor. 
 
 We prove that in the exponential case, there are nonnegative integers $k$ and $d \le \tfrac{q^{2k}-1}{q-1}$ such that $h_{\cM}(n) = \frac{q^{n+k}-1}{q-1} - qd$ for all sufficiently large $n$, and we characterise which matroids attain the growth rate function for large $n$. We also show that if $\cM$ is specified in a certain `natural' way (by intersections of classes of matroids representable over different finite fields and/or by excluding a finite set of minors), then the constants $k$ and $d$, as well as the point that `sufficiently large' begins to apply to $n$, can be determined by a finite computation. 
\end{abstract}

\subjclass{05B35,05D99}
\keywords{matroids, growth rates}
\date{\today}

\maketitle

\section{Introduction}

We write $\elem(M)$ for the number of points in a matroid $M$, so $\elem(M) = |M|$ for every simple matroid $M$. For a nonempty minor-closed class of matroids $\cM$, the \emph{growth rate function $h_{\cM}(n) \colon \bZ_{\ge 0} \to \bZ \cup \{\infty\}$} for $\cM$ is defined for all $n \ge 0$ by \[h_{\cM}(n) = \max\{\elem(M)\colon M \in \cM, r(M) \le n\}.\]  If $\cM$ contains all rank-$2$ uniform matroids, then clearly $h_{\cM}(n) = \infty$ for all $n \ge 2$. Otherwise, $h_{\cM}(n)$ is controlled up to a constant factor by the following theorem of Geelen, Kabell, Kung and Whittle [\ref{gkw}]:

\begin{theorem}[Growth Rate Theorem]\label{grt}
	Let $\cM$ be a minor-closed class of matroids not containing all rank-$2$ uniform matroids. There exists an integer $\alpha$ such that either
	\begin{enumerate}
		\item\label{lin} $h_{\cM}(n) \le \alpha n$ for all $n \ge 0$, 
		\item\label{quad} $\binom{n+1}{2} \le h_{\cM}(n) \le \alpha n^2$ for all $n \ge 0$ and $\cM$ contains all graphic matroids, or
		\item\label{exp} there is a prime power $q$ such that $\frac{q^n-1}{q-1} \le h_{\cM}(n) \le \alpha q^n$ for all $n \ge 0$, and $\cM$ contains all $\GF(q)$-representable matroids. 
	\end{enumerate}
\end{theorem}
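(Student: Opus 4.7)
The plan is to combine an upper bound from Kung's theorem with a structural result that forces specific geometries to appear inside any sufficiently dense class, and then read off the trichotomy. Since $\cM$ omits some rank-$2$ uniform matroid, there is an integer $\ell \ge 2$ with $U_{2,\ell+2} \notin \cM$; let $q$ be the largest prime power at most $\ell$. Then every simple $M \in \cM$ has no $U_{2,q+2}$-minor, so Kung's density theorem gives $\elem(M) \le \frac{q^{r(M)}-1}{q-1}$, which immediately furnishes an exponential upper bound $h_{\cM}(n) \le \alpha q^n$ with $\alpha = \frac{1}{q-1}$ for this particular $q$. The rest of the work is to identify the \emph{correct} $q$ and to show that intermediate growth rates (between linear/quadratic and between quadratic/exponential) cannot occur.

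The heart of the argument is a \emph{density-to-structure} theorem, which I would attempt by induction on rank: for every prime power $q$ and every positive integer $t$ there is a constant $C = C(q,t)$ so that if $M$ is simple with no $U_{2,q+2}$-minor, $r(M) \ge C$, and $\elem(M) \ge C \cdot (q')^{r(M)}$ for some prime power $q' \le q$, then $M$ has a $\PG(t-1, q')$-minor; the analogous statement at $q' = 1$ produces an $M(K_t)$-minor via an intermediate Dowling-geometry structure whenever $\elem(M) \ge C \cdot r(M)^2$. The inductive step contracts a single element whose ``link'' in $M$ is itself dense, passes to a minor of rank $r(M)-1$ of comparable density, and iterates; the base case amounts to showing that any rank-$3$ simple matroid with bounded line length and enough points contains a projective plane. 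Ramsey-type arguments are used at each contraction both to control the accumulating linear structure and to extract a consistent prime power $q'$ across the sequence of contractions.

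Granted the structural theorem, the trichotomy follows by case analysis. Define $q^*$ to be the infimum of those prime powers $q' \le q$ (using $q^* = 1$ to signal subexponential growth) for which $\limsup_n h_{\cM}(n)/(q')^n > 0$. If $q^* \ge 2$, then the density-to-structure theorem yields a $\PG(t-1,q^*)$-minor of some member of $\cM$ for every $t$, and minor-closure gives all $\GF(q^*)$-representable matroids, placing $\cM$ in case (\ref{exp}). If $q^* = 1$ but $h_{\cM}(n) \ge c n^2$ infinitely often, the $q' = 1$ form of the structural theorem produces $M(K_t)$-minors for every $t$, hence all graphic matroids, giving case (\ref{quad}); otherwise $h_{\cM}(n) = O(n)$ and we are in case (\ref{lin}). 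The main obstacle is the density-to-structure theorem itself, and in particular isolating the correct base field $q'$ from density information alone: arranging for an honest projective geometry (rather than a matroid that is merely ``locally'' projective or contains only a free-like substructure) requires a delicate sequence of Ramsey choices and is by far the most technical ingredient of the Geelen--Kabell--Kung--Whittle argument.
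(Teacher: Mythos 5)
This statement is not proved in the paper; the authors simply quote it as a known result and cite Geelen, Kung and Whittle~[\ref{gkw}], so there is no in-paper proof to compare yours against. What you have written is a reasonable high-level sketch of where that external proof lives, but it contains a few genuine logical slips that are worth flagging.

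First, the inference ``$U_{2,\ell+2}\notin\cM$ and $q\le\ell$, therefore no $M\in\cM$ has a $U_{2,q+2}$-minor'' is backwards: excluding $U_{2,\ell+2}$ does not exclude the shorter line $U_{2,q+2}$. The honest consequence of Kung's bound is $\elem(M)\le\tfrac{\ell^{r(M)}-1}{\ell-1}$, an exponential bound in base $\ell$, not base $q$; the correct $q$ of case~(\ref{exp}) has to be identified by a separate argument, not read off from $\ell$.

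Second, and more centrally, the density-to-structure theorem you state is strictly stronger than what is actually available. The result used throughout this paper (Theorem~\ref{gk}) asserts that a $U_{2,\ell+2}$-free matroid with density at least a constant times $q^{r}$ contains $\PG(m-1,q')$ for \emph{some} prime power $q'>q$ --- not for the same $q$ that witnessed the density. Your version, which promises a $\PG(t-1,q')$-minor for the exact $q'$ in the density hypothesis, is false in general and would collapse the hardest part of the argument. The correct way to isolate the base is to take $q$ to be the largest prime power such that $\cM$ contains all $\GF(q)$-representable matroids (this set of prime powers is nonempty and bounded by $\ell$), and then to argue that $h_{\cM}(n)=O(q^n)$, since any larger growth would, via the density-to-structure theorem, force $\PG(m-1,q')$-minors for some $q'>q$ and hence all $\GF(q')$-representable matroids into $\cM$, contradicting maximality. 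Your ``$q^*$'' should accordingly be a supremum (the largest prime power at which exponential density persists), not an infimum; as written, the infimum of the set you describe is always the smallest prime power whenever $\cM$ is exponentially dense.

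Finally, the inductive claim that a rank-$3$ simple matroid with bounded line length and sufficiently many points ``contains a projective plane'' is not correct, and this is precisely where the genuine technical difficulty of the Geelen--Kabell--Kung--Whittle program lies: one cannot force an honest $\PG(2,q)$ from density alone at rank $3$, and much of [\ref{gkpaper}] and [\ref{gkw}] is devoted to building projective geometries indirectly, via intermediate ``near-projective'' structures, rather than by a naive rank induction with contractions.

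None of this is fatal to the overall strategy --- excluded-line $\Rightarrow$ exponential upper bound, density $\Rightarrow$ structure, structure $\Rightarrow$ trichotomy --- but the errors above are exactly the places where a careless version of the argument would give the wrong base or a false structural conclusion, so they would need to be corrected before this could be called a proof outline of the Growth Rate Theorem.
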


Classes of type (\ref{exp}) are \emph{(base-$q$) exponentially dense}. Our main result is an essentially best-possible refinement of condition (\ref{exp}), determining the precise value of each exponential growth rate function for all but finitely many $n$, and determining which large-rank matroids in $\cM$ attain this function. 

\begin{theorem}\label{main}
	Let $\cM$ be a base-$q$ exponentially dense minor-closed class of matroids. There are integers $k \ge 0$ and $d \in \{0,1,\dotsc,\tfrac{q^{2k}-1}{q^2-1}\}$ so that 
	\[h_{\cM}(n) = \tfrac{q^{n+k}-1}{q-1} - qd\]
	for all sufficiently large $n$. Moreover, if $M \in \cM$ has sufficiently large rank and satisfies $\elem(M) = h_{\cM}(r(M))$, then $M$ is, up to simplification, a $k$-element projection of a projective geometry over $\GF(q)$. 
\end{theorem}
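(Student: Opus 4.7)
My plan is to extract the parameters $k$ and $d$ intrinsically from the class $\cM$, and then match them with both a construction and a structural upper bound. I would first define $k = k(\cM)$ to be the largest nonnegative integer such that, for arbitrarily large $s$, there is a matroid $N \in \cM$ of rank $s$ whose simplification is a $k$-element projection of $\PG(s-1, q)$ with $\elem(\si(N))$ within $O(q^{s/2})$ of $\tfrac{q^s-1}{q-1}$. Since $\cM$ is base-$q$ exponentially dense, the Growth Rate Theorem guarantees $\PG(s-1, q) \in \cM$, so $k = 0$ is always feasible; an upper bound on $k$ can be obtained by applying the Growth Rate Theorem to a natural minor-closed subclass of $\cM$ consisting of matroids without suitable $(k+1)$-lift structure, which must have strictly smaller growth-rate base. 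With $k$ fixed, define $d = d(\cM)$ as the minimum defect $\tfrac{1}{q}\bigl(\tfrac{q^{s+k}-1}{q-1} - \elem(\si(N))\bigr)$ among witnesses of large rank; a combinatorial analysis of $k$-element projections of projective geometries then gives $d \le \tfrac{q^{2k}-1}{q^2-1}$.

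For the lower bound, I would take a witness $N$ realising the minimum defect $d$ and extend it to larger ranks. The projective base can be enlarged from $\PG(s-1, q)$ to $\PG(n+k-1, q)$ for any $n$, using that $\cM$ contains all $\GF(q)$-representable matroids; applying the same $k$-element projection yields rank-$n$ matroids $M_n \in \cM$ with $\elem(M_n) = \tfrac{q^{n+k}-1}{q-1} - qd$.

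The main obstacle is the matching upper bound and the structural characterisation. The strategy is to show that any $M \in \cM$ of large rank $n$ with $\elem(M)$ exceeding the claimed formula would either produce a witness for $k+1$ (contradicting maximality of $k$) or a witness with defect smaller than $d$ (contradicting minimality of $d$). Making this precise requires (i) a density-increment argument locating a large $\PG(s-1, q)$ minor inside $M$, refined from the Growth Rate Theorem of Geelen--Kabell--Kung--Whittle to give sharp rather than approximate structural information; (ii) a rigidity/stabiliser theorem showing that the elements of $M$ outside this minor must project onto it via a prescribed $k$-element projection, forcing $\si(M)$ itself to be a $k$-element projection of $\PG(n+k-1, q)$; and (iii) a tight count of parallel classes in such a projection, matching the formula exactly. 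The hardest step will be (ii): translating approximate density information into an exact structural description almost certainly requires a stability theorem specific to extremal configurations in exponentially dense classes. Once these are in place, the characterisation of extremal matroids follows, since such projections are essentially unique once the $k$ projected elements are specified, up to the $\GF(q)$-linear symmetries of $\PG$.
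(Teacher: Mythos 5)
Your high-level plan---extract the extremal pair $(k,d)$ intrinsically from $\cM$, then match it with a construction for the lower bound and a structural argument for the upper bound---is in the same spirit as the paper, which chooses $(k,d)$ to be $\prec$-maximal among pairs for which $\cM \cap \cP_q^{qd}(k)$ contains matroids of unbounded rank (Theorem~\ref{maintech}). But your lower-bound step has a direction error that cannot be repaired without an additional hypothesis you do not have. You propose to take a witness $N$ of some rank, enlarge its projective base from $\PG(s-1,q)$ to $\PG(n+k-1,q)$, apply ``the same'' $k$-element projection, and conclude that the resulting rank-$n$ matroid $M_n$ lies in $\cM$. However, $M_n$ is in general not $\GF(q)$-representable, and it is not a minor of $N$ but rather something strictly larger; neither minor-closedness nor the fact that $\cM$ contains all $\GF(q)$-representable matroids gives you $M_n \in \cM$. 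This extension-by-modular-sum is precisely the extra closure property that the paper has to \emph{assume} in Lemma~\ref{computability} to prove the computability result Theorem~\ref{compute}---it is not available for an arbitrary base-$q$ exponentially dense class. The paper's lower bound goes in the opposite direction: having built $(k,d_0)$ so that $\cM \cap \cP_q^{d_0}(k)$ contains matroids of arbitrarily large rank, it uses Lemma~\ref{genericpoint} to show that contracting a generic point of a high-rank matroid in $\cP_q^{qd}(k)$ yields another matroid in $\cP_q^{qd}(k)$, and contractions of members of $\cM$ stay in $\cM$. That contraction argument is what you should use; the extension argument does not work without assuming closure under modular sum.

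A few secondary points. Your definition of $k$ has a rank inconsistency: a $k$-element projection of $\PG(s-1,q)$ has rank $s-k$, not $s$, and the threshold ``within $O(q^{s/2})$'' is not the right scale---once $\si(N) \in \cP_q(k)$, Lemma~\ref{projectiondensity} already bounds the defect by $q\tfrac{q^{2k}-1}{q^2-1}$, a constant in $s$; what separates $\cP_q(k)$ from $\cP_q(k+1)$ is the leading exponent ($q^k$ versus $q^{k+1}$), not an $O(q^{s/2})$ window. For the upper bound and characterization, your ``density-increment'' plus ``rigidity/stabiliser theorem'' is a reasonable slogan for what the paper does, but it is a placeholder rather than an argument. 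The actual machinery is quite specific and not a generic stability argument: pass to a weakly round restriction (Lemma~\ref{roundness}), find an affine geometry via a matroidal Density Hales--Jewett theorem (Theorem~\ref{mccdhj}), control non-representable structure via stacks (Lemmas~\ref{stackfindprojection}--\ref{stackwin}), contract to a nearly spanning projective geometry (Lemma~\ref{getnearspanningpg}), and finally recognize a $k$-element projection via projective maps and a lexicographic optimization over a four-parameter ``modularity'' measure (Lemmas~\ref{recognisepqk}--\ref{recognition} and~\ref{finddenseprojection}). Your step (ii)---forcing $\si(M)$ to be a $k$-element projection from approximate density information---is exactly where all the difficulty lies, and as written it is an unfilled gap rather than an alternative route.
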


By this we mean that $\si(M) \cong \si(M')$ for some matroid $M'$ obtained from a rank-$(r(M)+k)$ projective geometry over $\GF(q)$ by $k$ successive `extension-contraction' operations. The theorem was conjectured in [\ref{oldgrfpaper}].

	The `sufficiently large rank' condition in Theorem~\ref{main} is necessary in general, as the union of a base-$q$ exponentially dense minor-closed class $\cM$ with, say, the class of all $\GF(q')$-representable matroids of rank at most $t$ for some fixed $q' > q$, is base-$q$ exponentially dense, but  has a growth rate function that only adopts base-$q$ exponential behaviour for $n > t$. However, if $\cM$ is specified in some natural `finitary' way, then one might expect to compute $h_{\cM}(n)$ for every $n$. We prove a theorem showing that, in many cases, such a computation is possible in principle (as usual, $\Ex(\cO)$ denotes the class of matroids with no minor isomorphic to a matroid in $\cO$):
	
	\begin{theorem}\label{compute}
		Let $\cF$ be a finite set of finite fields and $\cO$ be a finite set of simple matroids. Let $\cM$ be the class of matroids in $\Ex(\cO)$ that are representable over all fields in $\cF$. If $\cM$ is base-$q$ exponentially dense and does not contain all truncations of $\GF(q)$-representable matroids, then there are computable nonnegative integers $k,d$ and $n_0$ such that $h_{\cM}(n) = \frac{q^{n+k}-1}{q-1} - qd$ for all $n \ge n_0$. 
	\end{theorem}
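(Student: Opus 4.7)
My plan is to invoke Theorem~\ref{main} to obtain $k$, $d$, and $n_0$ in principle, and then use the finitary specification of $\cM$ via $\cF$ and $\cO$ to effectivise each of these constants.

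\textbf{Decidability infrastructure.} Since $\cF$ and $\cO$ are finite, membership in $\cM$ is decidable: representability of a matroid over a finite field $\GF(q')$ can be checked by searching over the finite space of $r \times |E|$ matrices, and testing whether $M$ contains a fixed minor $N \in \cO$ is done by enumerating deletion/contraction pairs.  Moreover, there is a computable function $B(n)$ such that every simple $M \in \cM$ with $r(M) \le n$ satisfies $|M| \le B(n)$: if $\cF$ is nonempty, then the smallest field in $\cF$ gives such a bound directly, and if $\cF$ is empty we instead extract a computable $B$ from an effective form of the Growth Rate Theorem applied to $\Ex(\cO)$. These two ingredients combine to let us compute $h_{\cM}(n)$ exactly for any specific~$n$: enumerate all simple matroids of rank at most~$n$ and size at most~$B(n)$, test each for membership in $\cM$, and take the maximum.

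\textbf{Bounding $k$.} Theorem~\ref{main} asserts that $h_{\cM}(n) = \tfrac{q^{n+k}-1}{q-1}-qd$ for all large $n$, with the extremal matroids (of large rank) being simplifications of $k$-element projections of $\GF(q)$-projective geometries. The hypothesis that $\cM$ does not contain all truncations of $\GF(q)$-representable matroids supplies a concrete obstruction---an element of $\cO$ or a field $F \in \cF$ that some high truncation fails. Since a $k$-truncation is a particular $k$-element projection and, more generally, $k$-element projections of large projective geometries inherit structural features of $k$-th truncations, one can show that for $k$ exceeding a computable threshold $K$, every $k$-element projection of $\PG(n+k-1,q)$ with $n$ sufficiently large carries the forbidden obstruction, contradicting the characterisation of extremal matroids given by Theorem~\ref{main}. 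This yields a computable upper bound $k \le K$, and hence $d \le \tfrac{q^{2K}-1}{q^2-1}$.

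\textbf{Determining $k$, $d$, and $n_0$.} With $(k,d)$ confined to a finite set of candidate pairs, and with distinct pairs producing distinct asymptotics $\tfrac{q^{n+k}-1}{q-1}-qd$, only finitely many values of $n$ are required to distinguish them. Computing $h_{\cM}(n)$ at progressively larger $n$ via the decidability infrastructure above eventually narrows the list to a unique pair, yielding $k$ and $d$. An effective $n_0$ then follows either from an explicit reading of the proof of Theorem~\ref{main} (keeping track of how ``sufficiently large'' depends on $\cF$, $\cO$, $k$, $d$), or pragmatically, by continuing to compute $h_{\cM}(n)$ until the candidate formula has been verified throughout a sufficiently long stable range and the proof of Theorem~\ref{main} guarantees no later breakdown.

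The main obstacle is the $k$-bounding step. Converting the ``not all truncations'' hypothesis into an explicit computable upper bound on $k$ requires a careful structural argument: one must show that, once $k$ is large enough relative to the truncation at which $\cM$ first fails, every $k$-element extension-contraction sequence applied to a large projective geometry unavoidably produces the obstruction. This quantitative link between the general $k$-element projection operation and the particular truncation obstruction---rather than the decidability book-keeping---is the conceptual heart of the proof.
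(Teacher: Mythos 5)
Your proposal contains a genuine gap, and it also misplaces the role of the ``no truncations'' hypothesis.

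The critical difficulty is in your final step. You propose to determine $k$ and $d$ by ``computing $h_{\cM}(n)$ at progressively larger $n$'' until the list of candidate $(k,d)$ pairs narrows, and to determine $n_0$ either by ``an explicit reading of the proof of Theorem~\ref{main}'' or by verifying the formula over ``a sufficiently long stable range.'' Both alternatives are circular. Theorem~\ref{main} does \emph{not} give a computable bound on when ``sufficiently large'' begins; indeed, the paper explicitly points out that the sufficiently-large condition depends on $\cM$ in an essential and unbounded way (one can always pollute a class with low-rank examples). The quantified version, Theorem~\ref{maintech}, produces a computable $n_0 = f_{\ref{maintech}}(q,c,m)$ only given an integer $m$ with the property that no $(k',d') \succ (k,d)$ has $\cM \cap \cP_q^{d'}(k')$ containing a matroid of rank at least $m$. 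Determining such an $m$ is exactly the non-finitary obstruction: scanning $h_{\cM}(n)$ for ever-larger $n$ never tells you when you have entered the stable regime, because the question of whether a better $(k',d')$ will ``kick in'' at some higher rank is not controlled by any finite prefix of the growth rate function. So your procedure does not halt.

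The paper's resolution is the observation you did not make: under the no-truncations hypothesis, $\cM$ is closed under a modular-sum operation (Lemma~\ref{sumexcludeminor} for the excluded-minor part, Theorem~\ref{subfieldrepresentation} for representability). This closure shows that if $\cM \cap \cP_q^{qd}(k)$ contains a simple matroid of rank exactly $m = 58c^4$, it automatically contains one of every larger rank; hence the maximal $(k,qd)$ (and therefore $m$ in Theorem~\ref{maintech}) can be found by examining rank-$m$ matroids alone, a finite search. That is what makes $n_0$ — and then $k,d$ — computable. Your ``bounding $k$'' step, by contrast, is unnecessary: $k < c_{\cM}$ already follows from Lemma~\ref{controlclass}(\ref{c5}) and requires no truncation hypothesis. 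You correctly intuit that the no-truncations assumption must be converted into some quantitative structural control, but you put it in the wrong place (bounding $k$ rather than enabling lifting of low-rank extremal examples to all higher ranks), and the step you actually need is absent.
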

	
	Here, by \emph{computable} we mean that there is a Turing machine which, given some encoding of $\cF$ and $\cO$ as input, will output $k,d$ and $n_0$ in finite time. Since $h_{\cM}(n)$ can also be computed by exhaustion for all $n < n_0$, the above theorem shows that the entire growth rate function can be computed precisely for any such class.
	
	The insistence that the fields in $\cF$ are finite is artificial and just exists to avoid technicalities involving `specifying' an infinite field; if $\bF$-representability of a given matroid can be decided by a Turing machine for each $\bF \in \cF$, then the conclusion of Theorem~\ref{compute} still holds. The `truncation' condition, on the other hand, is necessary for our methods. Fortunately, this condition holds whenever $\cF \ne \varnothing$ or $\cO$ contains a co-line (since the truncation of a large circuit is a large co-line, which is not representable over a small finite field), so Theorem~\ref{compute} applies in both these natural cases.
	
	 If $\cM$ contains all  truncations of $\GF(q)$ representable matroids, then it appears that the densest matroids in $\cM$, though they are just small projections of projective geometries, can be `wild'. We conjecture that the problems this causes for our proof methods are fundamental, and that Theorem~\ref{computability} does not hold in full generality:
	
	\begin{conjecture}\label{undecidable}
		Let $\cO$ be a finite set of matroids such that $\Ex(\cO)$ is base-$q$ exponentially dense for some prime power $q$. It is undecidable to determine whether there exists an integer $k \ge 0$ such that $h_{\cM}(n) = \frac{q^{n+k}-1}{q-1}$ for all sufficiently large $n$.
	\end{conjecture}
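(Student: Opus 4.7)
The plan is to prove undecidability by reduction from the halting problem. Given a Turing machine $T$, one would construct a finite set $\cO_T$ of excluded minors such that $\Ex(\cO_T)$ is base-$q$ exponentially dense and satisfies $h_{\cM}(n) = \frac{q^{n+k}-1}{q-1}$ for some $k$ and all large $n$, that is, $d = 0$ in the notation of Theorem~\ref{main}, if and only if $T$ fails to halt on empty input. Since Theorem~\ref{compute} breaks down precisely when $\cM$ contains all truncations of $\GF(q)$-representable matroids, and the paper flags that case as the source of potential wildness, the reduction should live there.

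First, I would engineer a matroid encoding of Turing machine configurations. Following the shape of Theorem~\ref{main}, the densest matroids in $\cM$ are, up to simplification, $k$-element projections of $\PG(n+k-1, q)$; the idea is to identify such projections (for a fixed suitable $k$) with the state of $T$ after $t$ steps, so that the transition from step $t$ to $t+1$ corresponds to a well-defined local change in the projection. Concretely, each halting configuration of $T$ would be realized by a specific finite simple matroid $H_T$, and $\cO_T$ would consist of $H_T$ together with whatever additional fixed minors are needed to pin down base-$q$ exponential density and force the structural conclusion of Theorem~\ref{main} to apply.

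Second, I would verify both directions of the reduction. If $T$ never halts, no witness of halting ever appears, the excluded minors impose no constraint at large rank, the densest matroids are clean projections of $\PG(n+k-1, q)$, and $d = 0$. If $T$ halts, then for all sufficiently large $n$ the halting witness $H_T$ must appear inside every candidate projection of $\PG(n+k-1,q)$ that attains the bound $\frac{q^{n+k}-1}{q-1}$; forbidding $H_T$ then strictly decreases the point count by at least one, forcing $d \ge 1$ at every large rank.

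The main obstacle, and the reason the statement is only conjectured, is constructing gadgets with the required rigidity. The growth rate function is an extremely coarse invariant: to leverage it as a decision oracle for halting, every halting witness must robustly cost points against the projective-geometry upper bound at every large rank, simultaneously, in a way encoded by finitely many forbidden minors. The $k$-element projections appearing in Theorem~\ref{main} have just enough flexibility to potentially simulate computation (via the truncation-closed wildness highlighted in the paper), but it is very delicate to arrange that a halting computation appears without the usual cascade effects of minor-closure collapsing the distinction between halting and non-halting. A sensible preliminary step would be to produce any example of a finitely-specified minor-closed class in which the constant $d$ in Theorem~\ref{main} cannot be determined by a bounded-rank inspection; such a construction, even short of full undecidability, would already indicate whether the above reduction strategy can succeed.
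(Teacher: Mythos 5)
The statement you are addressing is a \emph{conjecture} in the paper (Conjecture~\ref{undecidable}); the authors do not prove it, and indeed they explicitly say they believe the obstacles to Theorem~\ref{compute} in this setting are ``fundamental.'' There is therefore no paper proof against which to measure your attempt. What the paper does supply is an informal motivating discussion (end of Section~\ref{computabilitysection}): without excluding a truncation, the number of nonisomorphic rank-$n$ matroids in $\cP_q(2)$ is already at least $q^{2n}$ (and plausibly doubly exponential in $n$), in contrast to the bound $2^{2^{q^{k(t+k)}}}$, independent of $n$, that holds once some $T(\PG(t,q))$ is excluded; the authors suggest that this abundance might allow one to encode undecidable problems ``in the form of minor-testing on two-element projections of projective geometries.'' Your sketch is in the same spirit --- simulate computation inside the rich space of small projections --- so at the level of motivation you and the paper agree.

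However, your sketch as written has a concrete defect. You form $\cO_T$ by including $H_T$, ``a specific finite simple matroid'' realising \emph{the halting configuration of $T$}. That object is only defined when $T$ halts, and computing it requires running $T$ to completion; so the map $T \mapsto \cO_T$ is not a computable reduction, which is essential for any undecidability argument. A workable reduction would instead encode the \emph{transition rules} of $T$ (or of an equivalent tiling problem) as a fixed, uniformly computable finite set of excluded minors, so that the presence of a halting pattern in high-rank extremal projections is a consequence of the rules rather than an input to the construction. Even with that fixed, the harder and genuinely open issue --- which you correctly flag --- is rigidity: one needs the extremal projections to carry a canonical layered structure, stable under taking minors, in which ``time step'' is faithfully tracked by rank; without that, minor-closure will wash out the distinction between halting and non-halting. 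The paper neither claims nor supplies such gadgets, and your sketch does not either, so what you have is a plausible programme aligned with the authors' own heuristic, not a proof.
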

	\section{Preliminaries}
	
	We mostly use the notation of Oxley [\ref{oxley}], but also write $|M|$ for $|E(M)|$ and $\elem(M)$ for $|\si(M)|$. Two matroids $M,N$ are equal \emph{up to simplification} if $\si(M) \cong \si(N)$. \emph{A simplification of $M$} is a $\si(M)$-restriction of $M$ (ie. any matroid obtained from $M$ by deleting all loops and all but one element from each parallel class.)
	
	An \emph{elementary projection} (also called a \emph{quotient}) of a matroid $M$ is a matroid $M'$ of the form $\wh M \con e$, where $e$ is a element of a matroid $\wh M$ that is not a loop or coloop, such that $M = \wh M \del e$. Thus, $r(M') = r(M) - 1$ and $E(M') = E(M)$. If $E(M)$ is the unique flat of $M$ that spans $e$ in $\wh M$, then $\wh M$ is the \emph{free extension} of $M$ by $e$, and $M'$ is the \emph{truncation} of $M$; we write $M' = T(M)$. 
	
	A \emph{$k$-element projection} of $M$ is a matroid obtained from $M$ by a sequence of $k$ elementary projections; it is easy to show that $M'$ is a $k$-element projection of $M$ if and only if there is a matroid $\wh M$ and a $k$-element independent independent set $K$ of $\wh M$ such that $M = \wh M \del K$ and $M' = \wh M \con K$.
	
	A collection $\cX$ of subsets in a matroid $M$ is \emph{skew} if $r_M(\cup_{X \in \cX} X) = \sum_{X \in \cX}r_M(X)$. A pair $(X,Y)$ of sets in $M$ is a \emph{modular pair} in $M$ if $r_M(X \cap Y) = r_M(X) + r_M(Y) - r_M(X \cup Y)$. A set $X$ is \emph{modular} in $M$ if it forms a modular pair with every flat of $M$. For example, every flat in a projective geometry is modular. Modularity gives a sufficient condition for a certain type of `sum' of two matroids to exist. If $M$ and $N$ are matroids, and the set $E(M) \cap E(N)$ is modular in $M$, then there is a unique matroid $M \oplus_m N$ with ground set $E(M) \cup E(N)$ that has both $M$ and $N$ as restrictions. We call this matroid, first defined by Brylawski [\ref{bry}], the \emph{modular sum} of $M$ and $N$, although many authors refer to the \emph{generalised parallel connection}.
		
A \emph{computable function} is one that can be calculated by a Turing machine that halts in finitely many steps; all functions defined in this paper are trivially computable. We also require that functions associated with the two theorems below, proved respectively in [\ref{gkpaper}] and [\ref{dhjpaper}], are computable. 

\begin{theorem}\label{gk}
	There is a computable function $f_{\ref{gk}} \colon \bZ^3 \to \bZ$ so that for every prime power $q$ and all integers $\ell,m \ge 2$, if $M$ is a matroid with no $U_{2,\ell+2}$-minor satisfying $\elem(M) \ge f_{\ref{gk}}(q,\ell,m) q^{r(M)}$, then $M$ has a $\PG(m-1,q')$-minor for some $q' > q$. 
\end{theorem}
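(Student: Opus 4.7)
The plan is to prove the contrapositive: let $\mathcal{N} = \mathcal{N}(q,\ell,m)$ be the minor-closed class of matroids with no $U_{2,\ell+2}$-minor and no $\PG(m-1,q')$-minor for any prime power $q'>q$, and show that $h_{\mathcal{N}}(n) \le f(q,\ell,m)\,q^n$ for some \emph{computable} $f$. Setting $f_{\ref{gk}} = f$ then completes the proof.

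The Growth Rate Theorem gives the non-effective version immediately. Since $\mathcal{N}$ forbids $U_{2,\ell+2}$, Theorem~\ref{grt} yields a constant $\alpha$ and a prime power $q'' \le \ell$ with $h_{\mathcal{N}}(n) \le \alpha (q'')^n$; if $q'' > q$, then $\mathcal{N}$ would contain all $\GF(q'')$-representable matroids and hence $\PG(m-1,q'')$, contradicting its definition. So $q'' \le q$ and $h_{\mathcal{N}}(n) \le \alpha q^n$. The real content of the theorem is rendering $\alpha$ computable from $(q,\ell,m)$.

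I would proceed by induction on $m$. For $m=2$, Kung's bound $\elem(M) \le (\ell^{r(M)}-1)/(\ell-1)$ combined with a pigeonhole-via-contraction argument produces an explicit threshold above which any $U_{2,\ell+2}$-minor-free $M$ has, in some minor, a parallel class of size at least $q+2$, and hence a $U_{2,q+2}$-minor — that is, a $\PG(1,q')$-minor with $q'>q$. For the inductive step, assuming a computable bound for $m-1$, I would first extract a $\PG(m-2,q')$-minor $N$ of $M$ (for some a priori unknown $q'$ in the finite set of prime powers $\{q'' : q < q'' \le \ell\}$), then argue that $\si(M/N)$ still contains an element which, via the generalised parallel connection introduced in the Preliminaries, extends $N$ to a $\PG(m-1,q')$-minor of $M$. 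The required density in $\si(M/N)$ is guaranteed by a sufficiently strong hypothesis on $\elem(M)$.

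The main obstacle is the quantitative bookkeeping across the induction: each rank added to the projective-geometry minor costs a multiplicative factor in the density threshold, and this factor depends on which $q'$ arises, itself not determined in advance. A uniform computable $f(q,\ell,m)$ is obtained by taking the worst case over all candidate fields $q' \in \{q+1,\ldots,\ell\}$ and all intermediate induction steps, but this requires each auxiliary step — the base case, the parallel-connection extension, and the appeal to the Growth Rate Theorem — to be performed with explicitly controlled constants. In particular, the proof of Theorem~\ref{grt} in [\ref{gkw}] itself needs to be cast in effective form; fortunately, that argument proceeds by explicit inductions without any appeal to compactness, so its constants can be extracted with some effort.
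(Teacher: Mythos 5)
The paper itself does \emph{not} prove Theorem~\ref{gk}; it cites Geelen and Kabell [\ref{gkpaper}] for the statement and merely remarks that the function there is computable because Lemmas~3.3, 4.3, 5.1 and 5.3 of [\ref{gkpaper}] all produce explicit bounds. Your proposal is therefore a genuinely different route: an independent proof by induction on $m$. Unfortunately the inductive step, which is where all of the difficulty lives, is hollow as written.

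Specifically, the step ``extract a $\PG(m-2,q')$-minor $N$ of $M$, then argue that $\si(M/N)$ still contains an element which, via the generalised parallel connection, extends $N$ to a $\PG(m-1,q')$-minor'' is not a valid argument. The generalised parallel connection (modular sum) is a \emph{construction} for building new matroids out of pieces; it is not a tool that detects or enlarges a projective-geometry minor inside a given dense matroid. Having a $\PG(m-2,q')$-minor plus one more element gives you, at best, a single-element extension of $\PG(m-2,q')$, and there is no reason this extension is again a projective geometry: it could be a free extension, a parallel extension, or many other things. Passing from $\PG(m-2,q')$ to $\PG(m-1,q')$ requires finding roughly $q'^{\,m-2}$ new points arranged coherently, and guaranteeing this under a density hypothesis is precisely the substantial combinatorial core of [\ref{gkpaper}] (their arguments about lines through, and covers of, a fixed projective-geometry minor). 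None of that content is supplied, and the modular sum is a red herring that does none of the work.

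Two further issues. In the base case $m=2$, finding a parallel class of size $q+2$ does not in general give a $\PG(1,q')$-minor for a prime power $q'>q$: this works only when $q+1$ is a prime power, and otherwise you need a line of length at least $q^*+1$ where $q^*$ is the smallest prime power $>q$ (easily fixed by adjusting the threshold, but it indicates the quantitative bookkeeping has not been done). And the opening appeal to the Growth Rate Theorem is circular as a basis for this result: the proof of the exponential case of Theorem~\ref{grt} in [\ref{gkw}] itself depends on having a Geelen--Kabell-type theorem in hand — you cannot treat Theorem~\ref{grt} as a black box available prior to Theorem~\ref{gk}. As motivation it is harmless, but if any part of the actual argument leans on it, the proof is void. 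The honest way to establish computability here is exactly what the paper does: trace through the specific lemmas in [\ref{gkpaper}] and observe that each one emits an explicit function of its inputs.
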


\begin{theorem}\label{dhj}
	There is a computable function $f_{\ref{dhj}}\colon \bZ^3 \times \bQ \to \bZ$ so that for every prime power $q$, all $\beta \in \bQ_{>0}$ and all integers $\ell,m \ge 2$, if $M$ is a matroid with no $U_{2,\ell+2}$-minor satisfying $r(M) \ge f_{\ref{dhj}}(q,\ell,m,\beta)$ and $\elem(M) \ge \beta q^{r(M)}$, then $M$ has an $\AG(m-1,q)$-restriction or a $\PG(m-1,q')$-minor for some $q' > q$.
\end{theorem}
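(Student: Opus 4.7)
The plan is to deduce the theorem from the density Hales-Jewett theorem applied to a large projective-geometry substructure inside $M$. I would prove the contrapositive: assume $M$ has no $\PG(m-1,q')$-minor for any $q' > q$, and deduce that $M$ has an $\AG(m-1,q)$-restriction once $r(M)$ is large enough. The overall strategy is to extract a large $\PG(N-1,q)$-minor from $M$, use the exponential density hypothesis to identify a dense subset of points of this projective geometry that lift to elements of $M$, invoke density Hales-Jewett to locate an affine $\GF(q)$-subspace inside this dense subset, and finally lift this subspace back to an honest restriction of $M$.

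More concretely, I would first combine Theorem \ref{gk} with an iterative argument (in the spirit of the proof of the Growth Rate Theorem) to show that, under the contrapositive hypothesis, there is a contraction set $C$ and a deletion set $D$ with $M/C \setminus D \cong \PG(N-1,q)$ for $N = N(q,\ell,m,\beta)$ as large as we please. Consider the natural projection $\pi\colon E(M) \setminus C \to E(\PG(N-1,q)) \cup \{\mathrm{loops}\}$ induced by contracting $C$ and identifying parallel classes. The density bound $\elem(M) \ge \beta q^{r(M)}$, together with $r(M) = N + r_M(C)$ and the observation that each non-loop fiber of $\pi$ has size at most roughly $q^{r_M(C)}$ (since $M$ has no $U_{2,\ell+2}$-minor, so the rank-$r_M(C)+1$ extensions of any point of $\PG(N-1,q)$ are constrained), forces the image $S \subseteq E(\PG(N-1,q))$ of $\pi$ to have density at least some $\beta' = \beta'(q,\ell,\beta) > 0$ in $\PG(N-1,q)$. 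Then the density Hales-Jewett theorem (equivalently, a multidimensional affine Szemer\'edi-type theorem over $\GF(q)$) yields, for $N$ large enough compared to $m$, $q$, and $\beta'$, an affine subgeometry $F \subseteq S$ isomorphic to $\AG(m-1,q)$.

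The main obstacle I anticipate is the final lifting step: converting the affine flat $F \subseteq \PG(N-1,q)$ into an $\AG(m-1,q)$-\emph{restriction} of $M$, rather than merely of the minor $M/C \setminus D$. For each point $p \in F$ one can pick a preimage $\tilde p \in E(M) \setminus C$, producing a candidate set $\tilde F \subseteq E(M)$ of the correct size; but one must verify that $M | \tilde F \cong \AG(m-1,q)$, i.e., that contracting $C$ has not created affine dependencies in $F$ that fail to hold already in $\tilde F$. This calls for a modular-sum/connectivity argument together with a careful choice of the preimages (or a pigeonhole refinement of the density Hales-Jewett step applied over the fibers of $\pi$), guaranteeing that the chosen lifts sit in sufficiently general position relative to $C$. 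I expect that resolving this lifting problem while simultaneously tracking bounds computably is where the bulk of the technical work --- and the computability of $f_{\ref{dhj}}$ --- is concentrated.
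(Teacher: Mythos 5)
This theorem is not proved in the present paper: it is quoted verbatim from reference [5] (Geelen--Nelson, ``A density Hales-Jewett theorem for matroids''), and the only internal discussion is the remark tracing the computability of $f_{\ref{dhj}}$ through Lemmas 4.3, 5.2 and Theorem 6.1 of that paper, plus $f_{\ref{gk}}$, a lemma of [\ref{oldgrfpaper}], and the Polymath density Hales--Jewett theorem. So there is no ``paper's own proof'' here to compare against, and you should have flagged the statement as an import.

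That said, your sketch does capture the correct high-level ingredients (use Theorem~\ref{gk} to win or to get near-$\GF(q)$ density, use the Polymath density Hales--Jewett theorem, produce an $\AG(m-1,q)$), but the specific decomposition you propose -- contract to a $\PG(N-1,q)$-minor, take the image of $E(M)$, apply density Hales--Jewett in the quotient, then lift -- has a genuine gap at the lifting step, and I do not believe it is a matter of ``careful bookkeeping.'' If $F \subseteq \PG(N-1,q) = M/C\del D$ is an $\AG(m-1,q)$-flat and $\tilde F$ is a choice of preimages, then all you know is $r_{M/C}(\tilde F) = m$; the rank $r_M(\tilde F)$ can be anywhere in $\{m,\dotsc,m+r_M(C)\}$, and $M|\tilde F \cong \AG(m-1,q)$ only if $\tilde F$ is skew to $C$ in $M$. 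Over small fields the fibers of $\pi$ can be singletons (e.g.\ $q=2$, where $\cl_M(\{\tilde p, c\})$ may already be a $2$-point line), so there is no freedom to ``choose the preimages in general position,'' and nothing in the density hypothesis forces even a single $\AG(m-1,q)$-flat of the quotient to have a skew lift. Resolving this would already require a density statement refined over the fibers --- exactly what you hint at with ``a pigeonhole refinement of the density Hales--Jewett step'' --- but that is the content of the theorem you are trying to prove, not a device you can invoke. Note that the usage of Theorem~\ref{dhj} in this paper (Lemma~\ref{getgoodpg}) is precisely in the ``easy'' setting $M \subseteq \PG(n-1,q)$ where this problem disappears; the hard content of [5] is getting an honest $\AG$-\emph{restriction} of a general $U_{2,\ell+2}$-free matroid, and my understanding is that the argument there builds the affine geometry from restrictions directly (via round/weakly-round reductions and the stack machinery of Section~4 of [5]) rather than by lifting from a quotient. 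Your density accounting step also needs the observation that, under the contrapositive hypothesis of no $\PG(m-1,q')$-minor, the fibers (which are rank-$(\le r_M(C)+1)$ flats) are themselves $O(q^{r_M(C)})$-bounded by another application of Theorem~\ref{gk} --- the naive $U_{2,\ell+2}$-free Kung bound gives only $\ell^{\,r_M(C)}$, which would let the quotient density tend to $0$.
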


	Computability of $f_{\ref{gk}}$ follows from computability of the functions defined in ([\ref{gkpaper}]: Lemmas 3.3, 4.3, 5.1 and 5.3). Computability of $f_{\ref{dhj}}$ relies on computability of the functions in ([\ref{dhjpaper}]: Lemmas 4.3 and 5.2, Theorem 6.1) which themselves rely on computability of $f_{\ref{gk}}$, as well as that of the function defined in ([\ref{oldgrfpaper}], Lemma 8.1) and the main result of [\ref{polymath}], the Density Hales-Jewett theorem. Checking the computability of all these functions directly is straightforward except for the theorem in [\ref{polymath}], and in that theorem the authors take care to provide computable upper bounds for the associated function (see [\ref{polymath}], Theorem 1.5).

	\section{Geometries and Projections}
	
	In this section we discuss the matroids that we show are the densest in classes in $\bE_q$, namely the projections of projective geometries. 
	
	For each prime power $q$ and every integer $k$, let $\cP_q(k)$ denote the class of matroids of rank at least $2$ that are, up to simplification, a loopless $k$-element projection of a projective geometry. $\cP_q(0)$ is just the class of matroids that simplify to $\PG(n,q)$ for some $n \ge 1$. In general, each rank-$r$ matroid $M \in \cP_q(k)$ satisfies $\si(M) \cong \si(\wh M \con K)$, where $K$ is a rank-$k$ independent \textbf{flat} of a rank-$(r+k)$ matroid $\wh M$ such that $\wh M \del K \cong \PG(r+k-1,q)$.	($K$ is a flat because $\wh M \con K$ is loopless.)
	
	We will establish basic properties of matroids in $\cP_q(k)$ regarding their density and local structure, then give a method of recognition. 
		
	\subsection{Density}
	
	We now calculate the density of the matroids in $\cP_q(k)$. To obtain a lower bound, we first show that a projective geometry cannot be nontrivially partitioned into a small number of flats:
	
	\begin{lemma}\label{flatpartition}
		If $G \cong \PG(n-1,q)$ and $\cF$ is a partition of $E(G)$ into flats of $G$ with $|\cF| > 1$, then $|\cF| > q^{n/2-1}$. 
	\end{lemma}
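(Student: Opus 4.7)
The plan is to leverage modularity of $G$. In a projective geometry every flat is modular, so any two flats $F, F' \in \cF$ form a modular pair, yielding $r_G(F \vee F') = r_G(F) + r_G(F') - r_G(F \cap F')$. Distinct flats in the partition are disjoint as point sets, so $F \cap F' = \varnothing$ and therefore $r_G(F) + r_G(F') = r_G(F \vee F') \le n$. Consequently, if $F_1 \in \cF$ has maximum rank $r_1$, then $r_1 < n$ (since $|\cF| > 1$) and every other flat of $\cF$ has rank at most $n - r_1$.

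From here I would split on whether $r_1 \le n/2$ or $r_1 > n/2$. When $r_1 \le n/2$, every flat of $\cF$ has size at most $(q^{\lfloor n/2 \rfloor} - 1)/(q-1)$, and summing these sizes over the partition yields $|\cF| \ge (q^n-1)/(q^{\lfloor n/2\rfloor}-1) \ge q^{\lceil n/2 \rceil} > q^{n/2-1}$. When $r_1 > n/2$, the flats in $\cF \setminus \{F_1\}$ partition $E(G) \setminus F_1$, which has $(q^n - q^{r_1})/(q-1) = q^{r_1} \cdot (q^{n-r_1}-1)/(q-1)$ points, while each of these flats contains at most $(q^{n-r_1}-1)/(q-1)$ points; this gives $|\cF| - 1 \ge q^{r_1} > q^{n/2}$ immediately.

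The whole argument is short and the only conceptual input is modularity of flats in a projective geometry, which is a standard feature already recalled in the preliminaries. I therefore do not expect a serious obstacle; the two cases are both one-line pigeonhole counts once the rank bound $r_1 + r(F') \le n$ is in hand.
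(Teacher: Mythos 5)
Your proof is correct and follows essentially the same route as the paper's: modularity of flats in $\PG(n-1,q)$ forces disjoint flats to have ranks summing to at most $n$, hence at most one block of $\cF$ has rank exceeding $n/2$, and a pigeonhole count of points against the maximum block size gives the bound. The paper folds your two cases into a single argument by letting $F_0$ be the unique high-rank flat when one exists and an arbitrary block otherwise, then using $|G \del F_0| \ge q^{n-1}$; your case split is a harmless unpacking of the same computation.
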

	\begin{proof}
		We have $r_G(F) \le n-1$ for all $n \in \cF$, so $|G \del F| \ge |\AG(n-1,q)| = q^{n-1}$ for each $F \in \cF$. Since any two flats of $G$ with rank greater than $n/2$ intersect, there is at most one such flat $F_0 \in \cF$; if there is no such $F_0$, let $F_0 \in \cF$ be arbitrary. Now $\cF - \{F_0\}$ is a partition of $E(G \del F_0)$ into flats of rank at most $n/2$. This gives 
		\[|\cF - \{F_0\}| \ge \left(\tfrac{q^{n/2-1}-1}{q-1}\right)^{-1}|G \del F_0| > q^{-n/2}q^{n-1} = q^{n/2-1},\]
		giving the result.
	\end{proof}
	
	\begin{lemma}\label{projectiondensity}
	If $M$ is a rank-$r$ matroid in $\cP_{q}(k)$, then
	\begin{enumerate}
		\item\label{density}  there exists $d \in \left\{0, \dotsc, \tfrac{q^{2k}-1}{q^2-1}\right\}$ such that $\elem(M) = \tfrac{q^{r+k}-1}{q-1}-qd$,
		\item\label{restr} $M$ has a spanning restriction in $\cP_q(k')$ for each $k' \le k$, and
		\item\label{dumbbound} $\elem(M) \ge q^{k/2}$.
	\end{enumerate}
	\end{lemma}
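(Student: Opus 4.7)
Write $M = \si(\wh M \con K)$ where $\wh M$ has rank $r+k$, $K$ is a rank-$k$ independent flat, and $G := \wh M \del K \cong \PG(r+k-1,q)$. The points of $\si(M)$ are in bijection with the rank-$(k+1)$ flats $F^{\ast}$ of $\wh M$ properly containing $K$, via $e \in E(G) \mapsto F^{\ast}_e := \cl_{\wh M}(K \cup \{e\})$. For each such $F^{\ast}$, the set $F := F^{\ast} \cap E(G)$ is a flat of $G$ whose rank $s_F := r_G(F)$ satisfies $1 \le s_F \le k+1$: the upper bound follows from $F \subseteq F^{\ast}$, and the lower from the submodular inequality $s_F + k = r(F) + r(K) \ge r(F \cup K) = k+1$. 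As $F^\ast$ varies, these sets form a partition $\cF$ of $E(G)$ into flats of $G$.

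For (\ref{density}), combine $|F| = \tfrac{q^{s_F}-1}{q-1}$ and $|F|-1 = q \cdot \tfrac{q^{s_F-1}-1}{q-1}$ to compute
\[\elem(M) = |\cF| = \sum_F |F| - \sum_F (|F|-1) = \tfrac{q^{r+k}-1}{q-1} - qd,\]
where $d := \sum_F \tfrac{q^{s_F-1}-1}{q-1}$ is a nonnegative integer. The upper bound $d \le \tfrac{q^{2k}-1}{q^2-1}$ is the deepest content of the lemma and the main anticipated obstacle. I plan to attack it by induction on $k$, exploiting that each $F$ with $s_F \ge 2$ witnesses a ``modular defect'' $s_F - 1 = r(F) + r(K) - r(F \cup K)$, equivalently a rank-$(s_F-1)$ subflat of $K$ lying in $\cl_{\wh M}(F)$. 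The form of the target bound $\tfrac{q^{2k}-1}{q^2-1} = |\PG(k-1,q^2)|$ suggests aggregating these local defects into a combinatorial object of size at most $|\PG(k-1,q^2)|$ determined by how the elements of $K$ interact with flats of $G$.

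For (\ref{restr}), choose a basis $B \subseteq E(G)$ of $\wh M/K$ (possible since $G$ spans $\wh M$) and extend it within $E(G)$ to an independent set $B'$ of size $r+k'$. Let $G_0 := \cl_G(B') \cong \PG(r+k'-1,q)$; since $B \cup K \subseteq G_0 \cup K$ is a basis of $\wh M$, we have $r_{\wh M}(G_0 \cup K) = r + k$, so $G_0$ spans $M$. Choose $K_2 \subseteq K$ of size $k-k'$ whose image in $\wh M/G_0$ is independent (such exists since $r_{\wh M/G_0}(K) = k-k'$); then $K_2$ is skew to $G_0$ in $\wh M$. Setting $\wh M' := \wh M|(G_0 \cup K) \con K_2$ and $K_1 := K \setminus K_2$, direct verification shows that $\wh M'$ has rank $r+k'$, that $K_1$ is a rank-$k'$ independent flat of $\wh M'$, and that $\wh M' \del K_1 \cong \PG(r+k'-1,q)$ (using that $K_2$ is skew to every subset of $G_0$). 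Since $\wh M' \con K_1 = \wh M|(G_0 \cup K) \con K = (\wh M \con K)|G_0$, it follows that $M|G_0 = \si(\wh M' \con K_1) \in \cP_q(k')$.

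Part (\ref{dumbbound}) follows directly from Lemma~\ref{flatpartition} applied to $\cF$: since $r(M) = r \ge 2$, we have $|\cF| = \elem(M) \ge 2$ (a one-part partition would force $\wh M/K$ to have rank at most $1$), so Lemma~\ref{flatpartition} yields $|\cF| > q^{(r+k)/2 - 1} \ge q^{k/2}$ using $r \ge 2$.
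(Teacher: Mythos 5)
Your analysis of the partition $\cF$, your computation of the form $\elem(M) = \tfrac{q^{r+k}-1}{q-1} - qd$ with $d = \sum_F \tfrac{q^{s_F-1}-1}{q-1} \ge 0$, your proof of part~(\ref{restr}), and your proof of part~(\ref{dumbbound}) are all correct and closely parallel the paper's proof (for part~(\ref{restr}) the paper simply takes a rank-$(r+k')$ flat of $G$ containing a rank-$r$ flat skew to $K$, which is the same construction you carry out more explicitly). However, you have explicitly left the upper bound $d \le \tfrac{q^{2k}-1}{q^2-1}$ unproved, and that inequality is the substantive content of part~(\ref{density}); as it stands your submission has a genuine gap.

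Two remarks on the gap. First, your sketch relies on the assertion that $s_F - 1 = \sqcap_{\wh M}(F,K)$ ``equivalently'' yields a rank-$(s_F-1)$ subflat of $K$ contained in $\cl_{\wh M}(F)$; this would follow if $(F,K)$ were a modular pair in $\wh M$, but that is not automatic (modularity is only guaranteed for pairs of flats inside $G$), so the plan is shakier than it looks. Second, the paper closes the gap by a different route: it picks a minimal subfamily $\cP_0$ of the nonsingleton parallel classes whose union spans the flat $F_0 := \cl_M(\cup\cP)$. Because the classes in $\cP_0$ are skew \emph{as points of $M$}, and because every pair of flats of the projective geometry $G$ is modular while $K$ is a flat of $\wh M$, one deduces that the corresponding flats of $G$ are skew \emph{in $G$}, whence $r_G(F_0) = \sum_{P\in\cP_0} s_P$ and
\[r_M(F_0) = |\cP_0| \le \sum_{P\in\cP_0}(s_P - 1) = r_G(F_0) - r_M(F_0) \le k.\]
From here the paper splits into the case $r_M(F_0)=k$ (forcing every $s_P=2$, so $r_G(F_0)=2k$, $|F_0| = \tfrac{q^{2k}-1}{q-1}$, and $qd = \elem(G)-\elem(M) \le \tfrac{q}{q+1}|F_0|$) and the case $r_M(F_0)<k$ (where $r_G(F_0)<2k$ gives a cruder but sufficient bound). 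You should either adopt an argument of this shape or flesh out your inductive plan fully; the present version does not establish part~(\ref{density}).
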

	\begin{proof}
		We may assume that $M$ is (without simplification) a loopless $k$-element projection of $\PG(r+k-1,q)$. Let $\wh M$ be a rank-$(r+k)$ matroid and $K$ be a $k$-element independent flat of $\wh M$ such that $\wh M \del K \cong \PG(r+k-1,q)$ and $\wh M \con K = M$. Let $G = \wh M \del K$.
		
		 If $k' \le k$ and $F$ is a rank-$(r+k')$ flat of $G$ with $\sqcap_{\wh M}(F,K) = k'$ (which is easily obtained by taking any rank-$(r+k')$ flat containing a rank-$r$ flat that is skew to $K$), then $K$ contains $(k-k')$ coloops of $\wh M|(F \cup K)$ and it is easy to see that $M|F$ is a rank-$r$ matroid in $\cP_q(k')$, giving (\ref{restr}). 
		
		Note that every point of $M$ is a parallel class. The points of $M$ are a partition of $E(G)$ into $\elem(M)$ flats of $G$, and $\elem(M) > 1$, so $\elem(M) \ge q^{(r+k)/2-1} \ge q^{k/2}$ by Lemma~\ref{flatpartition}, giving (\ref{dumbbound}). Let $\cP$ be the collection of points of $M$ containing more than one element of $E(M)$. We have $\elem(G) - \elem(M) = \sum_{P \in \cP}(|P|-1) \equiv 0 \pmod{q}$.  For each $P \in \cP$, let $d_P = r_{\wh M}(P) \ge 2$ and let $d_{\max} = \max_{P \in \cP} d_P$. Let $F$ be the flat of $M$ spanned by $\cup \cP$, and let $\cP_0 \subseteq \cP$ be minimal so that $\cup \cP_0$ spans $F$ in $M$ (note that $|\cP_0| = r_M(F)$). We may choose $\cP_0$ so that $d_P = d_{\max}$ for some $P \in \cP_0$. Observe that $\elem(M|F) \ge \left({\tfrac{q^{d_{\max}}-1}{q-1}}\right)^{-1}\elem(\wh M|F)$. Now $\cP_0$ is a skew set of points of $M$, every pair of flats of $G$ is modular, and $K$ is a flat of $\wh M$; it follows that $\cP_0$ is a skew set of flats of $G$ whose union spans $F$ in $G$. Therefore $r_{G}(F) = \sum_{P  \in \cP_0} d_P$, so \[r_M(F) = |\cP_0| \le \sum_{P \in \cP_0} (d_P-1) = r_{G}(F) -r_M(F) \le k.\]
		If $r_M(F) = k$, then equality holds above, so $r_{G}(F) = 2k$ and $d_P = 2$ for all $P \in \cP_0$ and thus (by choice of $\cP_0$) for all $P \in \cP$. This gives $|F| = \frac{q^{2k}-1}{q-1}$ and $|\cP| \le (q+1)^{-1}|F|$, so 
		\[0 \le \elem(G) - \elem(M) \le \tfrac{q}{q+1}|F| = q\tfrac{q^{2k}-1}{q^2-1}.\]
		If $r_M(F) < k$, then $r_{G}(F) < 2k$, and 
		\[0 \le \elem(G) - \elem(M) \le |F| \le \tfrac{q^{2k-1}-1}{q-1} < q\tfrac{q^{2k}-1}{q^2-1}.\]
		 Since $\elem(G) = \tfrac{q^{r+k}-1}{q-1}$ and $\elem(G) \equiv \elem(M)  \pmod{q}$, we have (\ref{density}).
		\end{proof}
		
	We now wish to show that, given a large matroid in $\cP_q(k)$, all but a few single-element contractions give another matroid in $\cP_q(k)$ with the same density. To do this, we first need a Ramsey-type lemma about large collections of flats in a projective geometry:
		
	\begin{lemma}\label{ramseyflats}
		Let $q$ be a prime power and $n \ge 1$, $t \ge 2$ and $s \ge 0$ be integers. If $\cF$ is a set of rank-$s$ flats of $G \cong \PG(n-1,q)$ such that $|\cF| \ge q^{ts^3}$, then there is a $t$-element set $\cF_0 \subseteq \cF$ and a flat $F_0$ of $G$ such that $F_0 = \cap_{F \in \cF_0} F$ and $\{F - F_0\colon F \in \cF_0\}$ is skew in $G \con F_0$. 
	\end{lemma}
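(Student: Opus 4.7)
The proof is by induction on $s$, with base case $s = 1$ handled directly: a greedy choice of $t$ independent points succeeds because $|\cF| \ge q^t$ exceeds the number of points of any rank-$(t-1)$ flat of $G$, and we take $F_0 = \emptyset$. Assume now that $s \ge 2$ and that the lemma holds for all smaller values of $s$. For each point $p$ of $G$, let $\mu(p)$ denote the number of $F \in \cF$ containing $p$; I split into two cases based on whether some $\mu(p)$ is large.

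First suppose that some point $p$ satisfies $\mu(p) \ge q^{t(s-1)^3}$. The flats $F \in \cF$ containing $p$, after contraction by $p$, give at least $q^{t(s-1)^3}$ distinct rank-$(s-1)$ flats in $G \con p$ (whose simplification is isomorphic to $\PG(n-2,q)$). Applying the inductive hypothesis to the simplification produces a $t$-element sunflower $\{F_1 \con p, \ldots, F_t \con p\}$ with some common core $F_0'$ and petals skew in $(G \con p) \con F_0'$. Setting $F_0$ to be the preimage of $F_0'$ in $G$ (a flat containing $p$), the family $\{F_1, \ldots, F_t\}$ is the desired sunflower in $G$: the correspondence between flats of $G$ containing $p$ and flats of $G \con p$ preserves intersections, and $(G \con p) \con F_0' = G \con F_0$ identifies the two quotients.

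The remaining case is when $\mu(p) < q^{t(s-1)^3}$ for every point $p$; here we construct a sunflower with core $F_0 = \emptyset$ by greedy selection. Pick $F_1 \in \cF$ arbitrarily, and inductively suppose that skew flats $F_1, \ldots, F_{k-1}$ have been chosen with $k \le t$. Let $L = \cl_G(F_1 \cup \dotsb \cup F_{k-1})$, a flat of rank $(k-1)s$. Modularity of the projective geometry gives that $F \in \cF$ is skew to $\{F_1, \ldots, F_{k-1}\}$ precisely when $F \cap L = \emptyset$, so the `bad' flats each contain a point of $L$, and a union bound yields at most $|L| \cdot \max_p \mu(p) \le q^{(t-1)s + t(s-1)^3}$ bad flats.

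The main obstacle is verifying the arithmetic inequality $(t-1)s + t(s-1)^3 < ts^3$; this reduces to $3ts^2 - 4ts + s + t > 0$, which holds for all $s, t \ge 1$ and is precisely the slack afforded by the cubic exponent $ts^3$ in the hypothesis. Consequently a valid $F_k$ can be chosen at each step, yielding the claimed sunflower after $t$ steps.
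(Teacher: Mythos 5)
Your proof is correct, but it takes a genuinely different route from the paper's. The paper begins by taking a maximal skew subfamily $\cF' \subseteq \cF$: if $|\cF'| \ge t$ we are immediately done with core $F_0 = \varnothing$, and otherwise the hull $H = \cl_G(\cup\cF')$ has rank less than $ts$, every $F \in \cF$ meets $H$ in a nonempty flat (by maximality of $\cF'$), and a pigeonhole over the at most $q^{ts(s+1)}$ flats of $G|H$ produces a popular intersection flat $H_0$; contracting $H_0$ drops the rank of the surviving family of flats by $r_G(H_0) \ge 1$, and the induction closes. Your proof instead uses a dichotomy on point popularity $\mu(p)$: if some single point is contained in at least $q^{t(s-1)^3}$ members of $\cF$, contract that point and induct on $s-1$; otherwise no point is popular and the greedy construction with $F_0 = \varnothing$ survives, since at step $k$ the flats meeting $L = \cl_G(F_1 \cup \cdots \cup F_{k-1})$ number fewer than $|L|\cdot \max_p\mu(p) < q^{(t-1)s + t(s-1)^3}$, which the inequality $3ts^2 - 4ts + s + t > 0$ shows is less than $q^{ts^3}$. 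The two arguments are dual in a familiar way: the paper's is an Erd\H{o}s--Rado-style ``maximal disjoint family $\to$ small kernel $\to$ pigeonhole,'' while yours is ``popular element $\to$ contract, else greedy.'' Yours has the advantage of only ever contracting single points and reducing the arithmetic to a single clean polynomial inequality; the paper's pays a coarser pigeonhole cost (the $q^{ts(s+1)}$ factor) in exchange for potentially descending by more than one in $s$ per step. You do not address the $s = 0$ case that the paper dispatches separately, but this is never invoked (in the paper's applications $s \ge 2$), and your $s=1$ base case correctly anchors the induction.
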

	\begin{proof}
		If $s = 0$, then the result holds vacuously since $|\cF| \le 1$. Suppose that $s > 0$ and the lemma holds for smaller $s$. Let $\cF'$ be a maximal skew subset of $\cF$. If $|\cF'| \ge s$ then the lemma holds, so we may assume that $|\cF'| < t$. Let $H = \cl_{G} (\cup \cF')$. So $r_G(H) < ts$ and $|H| < q^{ts}$; note that the number of nonempty flats of $G|H$ of rank at most $k$ is less than $\sum_{i = 1}^s |H|^i < q^{ts(s+1)}$. 
		
		By maximality of $\cF'$, each $F \in \cF$ intersects $H$ in such a flat, so there a nonempty flat $H_0$ of $G|H$ and a set $\cF'' \subseteq \cF$ such that $|\cF''| \ge q^{-ts(s+1)}|\cF| $ and $F \cap H = H_0$ for each $F \in \cF''$. Let $j = r_G(H_0)$. Now $\{F - H_0\colon F \in \cF''\}$ is a collection of rank-$(s-j)$ flats in $G \con H_0$ of size $|\cF''| \ge q^{ts^3-ts(s+1)} > q^{t(s-j)^3}$. Since $\si(G \con H_0)$ is a projective geometry over $\GF(q)$, the lemma holds by an inductive argument. 
	\end{proof}
	
	Now we argue that at most $q^{58k^4}$ points alter the density of a matroid in $\cP_q(k)$ when contracted: 
	
	\begin{lemma}\label{genericpoint}
		Let $q$ be a prime power and $k \ge 0$ be an integer. If $M \in \cP_q(k)$, and $d$ is the integer such that $\elem(M) = \frac{q^{r(M)+k}-1}{q-1} - qd$, then there is a set $X \subseteq E(M)$ such that $\elem(M|X) < q^{58k^4}$ and $\elem(M \con e) = \frac{q^{r(M \con e)+k}-1}{q-1} - qd$ for all $e \in E(M) - X$.
	\end{lemma}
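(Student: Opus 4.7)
The plan is to construct, for $e$ outside a controlled bad set $X$, a new $k$-element projection structure witnessing $M/e \in \cP_q(k)$ with defect $d$, and then to bound $\elem(M|X)$ using Lemma~\ref{ramseyflats}. Write $M = \wh M/K$ with $\wh M \setminus K = G \cong \PG(r+k-1,q)$ and $K$ a $k$-element independent flat of $\wh M$, and let $\cP^*$ denote the set of non-singleton parallel classes of $M$. From the proof of Lemma~\ref{projectiondensity}, each $P \in \cP^*$ is a flat of $G$ of rank $d_P \ge 2$ with $\sqcap_{\wh M}(P,K) = d_P - 1$; the flat $F^* := \cl_M(\bigcup \cP^*)$ has $M$-rank at most $k$ and $G$-rank at most $2k$; and $d = \sum_{P \in \cP^*} \frac{q^{d_P - 1}-1}{q-1}$.

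For the construction, fix any hyperplane $H$ of $G$ with $e \notin H$ and set $\wh N := (\wh M/e)|(K \cup H)$. Provided $d_{P_e} = 1$, routine rank computations give that $\wh N \setminus K = G|H \cong \PG(r+k-2,q)$, that $K$ is a rank-$k$ independent flat of $\wh N$, and that $\wh N/K = (M/e)|H$; moreover each line $L$ of $M$ through $e$ has $r_G(L) \ge 2$, so $L \cap H$ is a nonempty flat of rank $r_G(L)-1$, whence $H$ is a transversal of the parallel classes of $M/e$ and $\si((M/e)|H) = \si(M/e)$. This exhibits $M/e$ as an element of $\cP_q(k)$ whose defect in the new structure equals $\sum_L \frac{q^{r_G(L)-2}-1}{q-1}$, the sum ranging over lines $L$ of $M$ through $e$ (rank-$2$ $G$-flats contributing zero). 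This matches $d$ exactly when the lines of $M$ through $e$ of $G$-rank at least $3$ are in bijection with $\cP^*$ via $P \mapsto L_P := \cl_M(e \cup P)$, each $L_P$ satisfying $r_G(L_P) = d_P + 1$.

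Let $X := F^* \cup \bigcup \cB$, where $\cB$ denotes the collection of rank-$2$ $M$-flats of $G$-rank at least $3$ that are either disjoint from $\bigcup \cP^*$ (``rogue'' lines) or of the form $L_P$ with $r_G(L_P) > d_P + 1$ (``bad $L_P$''); every non-generic element lies in $X$, and $\elem(M|F^*) \le k + \frac{q^{2k}-1}{q-1}$. To bound $|\cB|$, stratify by $G$-rank $s \in \{3,\dots,k+2\}$ (using $r_G(L) \le r_M(L)+k$); each member satisfies $\sqcap_{\wh M}(L,K) = s - 2$. A short submodularity calculation gives super-additivity of $\sqcap(\cdot,K)$ over skew unions of flats of $G$, so any skew collection of $t$ rank-$s$ flats in $\cB$ forces $t(s-2) \le k$, i.e., $t \le k$. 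Applying Lemma~\ref{ramseyflats} with $t = k+2$ then bounds each stratum by $q^{(k+2)s^3} \le q^{(k+2)^4}$; a case analysis on the Ramsey anchor $F_0$ shows $F_0$ must be $\varnothing$, a singleton $\{f\}$, or $F_0 = P \in \cP^*$, and the anchored case $F_0 = P$ is handled by descending to $\wh M/P$ to obtain the analogous inequality $t(s - d_P - 1) \le k - d_P + 1$. Summing over the $O(k)$ strata and using $\elem(M|L) \le \frac{q^{k+2}-1}{q-1}$ per flat gives $\elem(M|X) < q^{58k^4}$.

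The main obstacle is the Ramsey-based bound on $|\cB|$: super-additivity of $\sqcap(\cdot,K)$ over skew unions is short, but the case analysis of the anchor $F_0$ in Lemma~\ref{ramseyflats} must be handled carefully---particularly the anchored case $F_0 = P \in \cP^*$, where one must descend to $\wh M/P$ in which $K$ is no longer an independent rank-$k$ flat (its rank drops by $d_P - 1$) yet the relevant $\sqcap$-bound still goes through. The target $q^{58k^4}$ is loose enough that the remaining counting across strata and anchors is routine bookkeeping.
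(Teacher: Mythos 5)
Your proof follows essentially the same strategy as the paper's: identify the elements whose contraction changes the defect (those in a nontrivial parallel class, or sitting on a line of $M$ whose $G$-rank is ``wrong''), and bound the number of such elements via Lemma~\ref{ramseyflats}, stratified by $G$-rank and split into cases by the Ramsey anchor $F_0$. Your explicit witness $\wh N = (\wh M \con e)|(K \cup H)$, obtained by restricting to a hyperplane $H$ of $G$ avoiding $e$, is a nicely concrete way to verify $M \con e \in \cP_q(k)$ with the same defect; the paper instead argues indirectly in terms of the flat families $\cF_1$ and $\cF_2$ and how they behave under contraction. The super-additivity of $\sqcap(\cdot,K)$ over skew flats, and the descent to $\wh M \con F_0$ when the anchor is nonempty, matches the paper's argument.

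However, one numerical choice breaks the stated bound. You invoke Lemma~\ref{ramseyflats} with $t = k+2$, giving a per-stratum bound of $q^{(k+2)s^3}$; since your flats have $G$-rank $s$ up to $k+2$, this is $q^{(k+2)^4}$, and for $k=1$ that is already $q^{81}$. After multiplying by $\elem(M|L) \le q^{k+2}=q^3$, you are at $q^{84}$, which exceeds the target $q^{58k^4}=q^{58}$. But your own anchor analysis shows a skew anchored $t$-collection is impossible once $t(s-2) > k$ (unanchored or singleton-anchored) or $t(s-d_P-1) > k+1-d_P$ (anchored at $P \in \cP^*$, where $s \ge d_P + 2$ by definition of bad $L_P$), so $t = k+1$ already yields the contradiction in every case. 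With $t = k+1$ the per-stratum bound is $q^{(k+1)s^3} \le q^{(k+1)(k+2)^3} \le q^{54k^4}$, and the remaining bookkeeping comfortably lands within $q^{58k^4}$ (for $k=1$ the total is under $q^{57}+q^2 < q^{58}$). This is an arithmetic overshoot rather than a conceptual gap, but as written your final inequality fails at $k=1$.
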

	\begin{proof}
		Let $\wh M$ be a matroid having a $k$-element independent set $K$ such that $\wh M \del K \cong \PG(r(M)+k-1,q)$, and $\si(\wh M \con K) \cong \si(M)$. Let $G = \wh M \del K$. Since $M \in \cP_q(k)$, the set $K$ spans no element of $G$ in $\wh M$.  We may assume that $k \ge 1$, since $\cP_0(q)$ is the class of $\GF(q)$-representable matroids and both statements are easy in this case. 

	 Let $\cF_{-1} = \varnothing$, and for each $j \in \{0,1,2\}$, let $\cF_j$ be the set comprising every flat $F$ of $G$ so that $\sqcap_{\wh M}(F,K) = r_{G}(F) - j$ and $\sqcap_{\wh M}(F',K) \le r_G(F')-j$ for each flat $F'$ contained in $F$. Since $K$ spans no element of $G$, we have $\cF_0 = \varnothing$. The flats in $\cF_1$ correspond exactly to the points of $\wh{M} \con K$ containing more than one element of $\wh{M}$, so we have $qd = \frac{q^{r(M)+k}-1}{q-1} - \elem(M) = \sum_{F \in \cF_1} (|F|-1)$. For each $e \in E(\wh M)$, if $e$ lies in no flat in $\cF_1$ or $\cF_2$, then the flats of rank at least $2$ in  $G \con e$ that are contracted onto points of $(\wh M \con K) \con e$ correspond exactly to the flats in $\cF_1$ (contracting $e$ creates no new flat of this type, and destroys no flat of this type), so we have $qd = \frac{q^{r(M \con e)+k}-1}{q-1} - \elem(M \con e)$. Let $X = \cup(\cF_1 \cup \cF_2)$; it suffices to show that $\elem(M|X) < q^{58k^4}$. 
		
		Clearly every flat in $\cF_1 \cup \cF_2$ has rank at least $2$ and at most $k+1$. If $|\cF_1 \cup \cF_2| \ge 2kq^{(k+1)(k+2)^3}$, then by Lemma~\ref{ramseyflats} there is some $j \in \{1,2\}$, some $t \in \{2, \dotsc, k+1\}$, and a $(k+1)$-element set $\cH \subseteq \cF_j$ of rank-$t$ flats of $G$ such that $\{H - H_0\colon H \in \cH\}$ is a skew set in $G \con H_0$, where $H_0 = \cap \cH$. Note that $r_G(H_0) \le s-1$. If $j = 1$ then, since $K$ spans no element of $H_0$ we have $\sqcap_G(K,H_0) \le s-2$. If $j = 2$ then, since $H_0 \subseteq H \in \cF_2$ for each $H \in \cH_0$, we have $\sqcap_G(K,H_0) \le r_G(H_0) - 2 = s-3$. In either case, $\sqcap_G(K,H_0) < \sqcap_G(K,H)$ for each $H \in \cH$. It follows that $\{H - H_0\colon H \in \cH\}$ is a skew set of $k+1$ flats of $G \con H_0$, none of which is skew to $K$ in $G \con H_0$. Since $r_{G \con H_0}(K) \le k$, this is a contradiction. Therefore $|\cF_1 \cup \cF_2| \le 2kq^{(k+1)(k+2)^3}$. Since every flat in $\cF_1 \cup \cF_2$ has rank at most $k+1$, we have 
		\[\elem(M|X) \le \elem(G|X) \le 2kq^{k+1}q^{(k+1)(k+2)^3} < q^{4k^4}q^{(2k)(3k)^3} = q^{58k^4},\]
		as required.
	\end{proof}

		

	\subsection{Local Representability}
	
	There are many different $k$-element projections of a projective geometry over $\GF(q)$; some contain small restrictions that are not $\GF(q)$-representable (for example, the principal truncation of a plane of $\PG(n,q)$ is in $\cP_q(1)$ and has a $U_{2,q^2+q+1}$-restriction) and some do not (for example, the truncation of $\PG(n,q)$ is in $\cP_q(1)$ but contains no non-$\GF(q)$-representable flat of rank less than $n$). We define a parameter measuring the degree of `local representability' of a matroid in $\cP_q(k)$. For an integer $h \ge 1$, let $\cP_q(k,h)$ be the class of matroids in $\cP_q(k)$ having the property that every restriction of rank at most $h$ is $\GF(q)$-representable. 
	
	This property can be easily described in terms of the projection itself:
	
	\begin{lemma}\label{smoothnessequiv}
		Let $h \ge 2$ be an integer, and let $M \in \cP_q(k)$. Let $\si(M) \cong \si(\wh M \con K)$ for some rank-$(r+k)$ matroid $\wh M$ and $k$-element independent set $K$ of $\wh M$ so that $G = \wh M \del K \cong \PG(r+k-1,q)$. Then $M \in \cP_q(k,h)$ if and only if $\sqcap_{\wh M}(K,F) = 0$ for every flat $F$ of $G$ of rank at most $h+1$. 
	\end{lemma}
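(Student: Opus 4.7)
My plan is to prove both directions by exploiting a submodular-type monotonicity of $\sqcap_{\wh M}(K,-)$ along chains of flats of $G$: if $F'\subseteq F$ are flats of $G$, then $\sqcap_{\wh M}(K,F)-\sqcap_{\wh M}(K,F')\le r_G(F)-r_G(F')$, since $r_{\wh M}(F\cup K)\ge r_{\wh M}(F'\cup K)$. Hence $\sqcap_{\wh M}(K,-)$ grows by at most one per unit of $G$-rank, which drives both directions.

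For $(\Leftarrow)$, iterating this inequality from any rank-$(h+1)$ sub-flat shows that every flat $F$ of $G$ with $r_G(F)\ge h+1$ has $r_M(F)=r_G(F)-\sqcap_{\wh M}(K,F)\ge h+1$. So if $X\subseteq E(M)$ satisfies $r_M(X)\le h$, then $F:=\cl_G(X)$ has $r_G(F)\le h$, the hypothesis gives $\sqcap_{\wh M}(K,F)=0$, and therefore $M|F=\wh M|F=G|F\cong\PG(r_G(F)-1,q)$; as $M|X$ is a restriction of a $\GF(q)$-representable matroid it is itself $\GF(q)$-representable, as desired.

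For $(\Rightarrow)$ I argue the contrapositive. Let $F_0$ be a flat of $G$ of minimum rank $s_0$ with $\sqcap_{\wh M}(K,F_0)\ge 1$; monotonicity forces $\sqcap_{\wh M}(K,F_0)=1$, and looplessness of $M$ rules out $s_0=1$, so $s_0\ge 2$. The rank-$1$ flat $\cl_{\wh M}(F_0)\cap\cl_{\wh M}(K)$ of $\wh M$ must be disjoint from $E(G)$ (else $M$ would have a loop), and since $K$ is $\wh M$-independent it equals $\{e\}$ for a unique $e\in K$. By the minimality of $F_0$, $e\in\cl_{\wh M}(F_0)$ but $e\notin\cl_{\wh M}(F_0')$ for every proper sub-flat $F_0'\subsetneq F_0$, so $\wh M|(F_0\cup\{e\})$ is the free extension of $G|F_0=\PG(s_0-1,q)$ by $e$; a short rank check shows that $K\setminus\{e\}$ is skew to $F_0\cup\{e\}$ in $\wh M$, so contracting $K$ from $\wh M|(F_0\cup K)$ reduces to contracting the single element $e$.

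If $s_0\ge 3$, then $M|F_0=(\wh M|(F_0\cup\{e\}))\con e=T(\PG(s_0-1,q))$, a simple rank-$(s_0-1)$ matroid on $\tfrac{q^{s_0}-1}{q-1}$ points, exceeding the $\tfrac{q^{s_0-1}-1}{q-1}$ maximum for a $\GF(q)$-representable matroid of that rank; since $s_0-1\le h$, this contradicts $M\in\cP_q(k,h)$. The main obstacle is the case $s_0=2$, where $M|F_0$ has rank $1$ and is trivially $\GF(q)$-representable; to handle it I use that $r(M)\ge 2$ is built into the definition of $\cP_q(k)$. The flat $\cl_{\wh M}(F_0\cup K)$ then has rank $k+1<r(M)+k=r(\wh M)$, so some $v\in E(G)$ lies outside it, and $F:=\cl_G(F_0\cup\{v\})$ is a rank-$3$ flat of $G$ with $\sqcap_{\wh M}(K,F)=1$. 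The identical decomposition identifies $M|F$ as the principal truncation of $\PG(2,q)$ at the line $F_0$, whose simplification is $U_{2,q^2+1}$; with $q^2+1>q+1$ and $2\le h$, this again contradicts $M\in\cP_q(k,h)$, and the hypothesis $h\ge 2$ is used precisely to accommodate this rank-$2$ witness.
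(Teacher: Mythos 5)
Your $(\Leftarrow)$ direction is sound and is essentially the paper's argument, just reorganized around the monotonicity $\sqcap_{\wh M}(K,F)-\sqcap_{\wh M}(K,F')\le r_G(F)-r_G(F')$ for nested flats; the only imprecision is that the conclusion ``$r_M(X)\le h\Rightarrow r_G(\cl_G(X))\le h$'' silently uses $\cl_G(X)\subseteq\cl_M(X)$ (since $M$ is a quotient of $G$), which you should say.

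The $(\Rightarrow)$ direction has a genuine gap. You claim that because $\sqcap_{\wh M}(K,F_0)=1$, the set $\cl_{\wh M}(F_0)\cap\cl_{\wh M}(K)$ is a rank-$1$ flat and hence contains a unique $e\in K$. That is not a consequence of $\sqcap=1$: local connectivity $1$ only asserts $r_{\wh M}(F_0\cup K)=r_{\wh M}(F_0)+k-1$; it does not force the two closures to meet. (Already in $U_{3,4}$ with $X=\{a,b\}$, $Y=\{c,d\}$ one has $\sqcap(X,Y)=1$ and $\cl(X)\cap\cl(Y)=\varnothing$; nothing in the hypotheses of the lemma rules this phenomenon out for the pair $(F_0,K)$ in $\wh M$.) Consequently the whole chain ``$\wh M|(F_0\cup\{e\})$ is a free extension'', ``contracting $K$ reduces to contracting $e$'', and the identification of $M|F_0$ with $T(\PG(s_0-1,q))$ is not justified as written, and likewise the identification of $M|F$ with a principal truncation in the $s_0=2$ case.

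The good news is that you do not need $e$ at all, and the conclusions you want drop out of minimality directly. For $s_0\ge3$: every line $L$ of $G|F_0$ is a proper subflat of $F_0$, hence $\sqcap_{\wh M}(K,L)=0$ by minimality of $F_0$, so no two elements of $F_0$ are parallel in $M=\wh M\con K$. Thus $\elem(M|F_0)=\tfrac{q^{s_0}-1}{q-1}$ while $r_M(F_0)=s_0-\sqcap_{\wh M}(K,F_0)=s_0-1\le h$, exceeding the $\GF(q)$-representable bound. For $s_0=2$: pass to a plane $F\supseteq F_0$ with $\sqcap_{\wh M}(K,F)=1$ (your existence argument for $F$ is fine), and use modularity of flats of the projective geometry $G$ to deduce that $F_0$ is the \emph{only} line of $G|F$ that is not skew to $K$ (if $L\ne F_0$ then $L\cap F_0$ is a point, and supermodularity of $\sqcap_{\wh M}(K,\cdot)$ on the modular pair $(L,F_0)$ gives $\sqcap(K,L)+\sqcap(K,F_0)\le\sqcap(K,F)+\sqcap(K,L\cap F_0)=1$). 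Hence at most $q$ points of $F$ are lost on contracting $K$, and $\elem(M|F)\ge q^2+1>q+1$ with $r_M(F)=2\le h$. This is in fact how the paper runs the argument; it never locates an element of $K$, and works uniformly with local connectivity and modularity in $G$.
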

	\begin{proof}
		Suppose that $\sqcap_{\wh M}(K,F) > 0$ for some flat $F$ of $G$ with $r_G(W) \le h+1$. Choose $F$ to be minimal with this property, so $\sqcap_{\wh M}(K,F) = 1 < r_G(F)$. Let $F' = F$ if $r_G(F) > 2$, and if $r_G(F) = 2$, let $F'$ be a plane of $G$ containing $F$ and so that $\sqcap_{\wh M}(F',K) = 1$; such a plane exists because $r(M) > 1$ so $r_{\wh M}(K) \le r+k-2$.  By modularity of the flats in $G$ and the fact that $K$ spans no point of $F'$, there is at most one line of $G|F'$ that is not skew to $K$ in $G$; it follows that 
		\[\elem(M|F') \ge \elem(G|F') - q \ge \tfrac{q^{r_G(F')}-1}{q-1} - q > \tfrac{q^{r_G(F')-1}-1}{q-1} \ge \tfrac{q^{r_M(F')}-1}{q-1},\]
		so $M|F'$ is too dense to be $\GF(q)$-representable. We have $r_M(F') \le \max(h,3-1) \le h$, so $M \notin \cP_q(k)$. 
				
		Conversely, suppose that $\sqcap_{\wh M}(K,F) = 0$ for every flat $F$ of rank at most $h+1$ in $G$. For every independent set $I$ of $M$ with $r_M(I) \le h$, we have $\sqcap_{\wh M}(I \cup \{e\},K) = 0$ for all $e \in E(G)$, and so $e \notin \cl_{\wh M \con K}(I)$ for all $e \in E(G) - \cl_G(I)$. Therefore $M|\cl_M(I) = G|\cl_G(I)$ and thus $M|\cl_M(I) \cong \PG(r_M(I)-1,q)$, giving $M \in \cP_q(k,h)$.	
	\end{proof}
	
	In particular, it follows from the above lemma that if $M \in \cP_q(k,2)$, then $\elem(M) = \frac{q^{r(M)+k}-1}{q-1}$, since $K$ is skew to every line of $G$ and its contraction thus identifies no pair of points. 
	
	We now show that given a set $X$ of at least half the elements a very large matroid in $\cP_q(k,h)$, we can find a large contraction-minor in $\cP_q(k,h)$ that is `covered' by $X$. 
		
	\begin{lemma}\label{getgoodpg}
		There is a computable function $f_{\ref{getgoodpg}}\colon \bZ^4 \to \bZ$ so that, for every prime power $q$ and all integers $k,m \ge 0$ and $h \ge 2$, if $M \in \cP_q(k,h)$ is simple with $r(M) \ge f_{\ref{getgoodpg}}(q,k,h,m)$, and $X \subseteq E(M)$ satisfies $|X| \ge \tfrac{1}{2}|M|$, then there is a set $C \subseteq E(M)$ such that $M \con C \in \cP_q(k,h)$, $r(M \con C) \ge m$, and each parallel class of $M \con C$ intersects $X$. 
	\end{lemma}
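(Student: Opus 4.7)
The plan is to exploit the projection structure of $M$: write $\si(M)\cong\si(\wh M\con K)$ where $K$ is an independent $k$-element set of $\wh M$ and $G=\wh M\del K\cong\PG(r+k-1,q)$. Since $h\ge 2$, Lemma~\ref{smoothnessequiv} gives $\sqcap_{\wh M}(K,F)=0$ for every flat $F$ of $G$ of rank at most $h+1$; in particular no two elements of $E(G)$ are parallel in $\wh M\con K$, so we may identify $E(M)$ with $E(G)$ and regard $X\subseteq E(G)$ with $|X|\ge\tfrac12|G|$.

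We aim for a rank-$(r-m)$ flat $C$ of $G$ satisfying three conditions: (i) $\sqcap_{\wh M}(K,C)=0$; (ii) $\sqcap_{\wh M}(K,F)=0$ for every flat $F$ of $G$ with $C\subseteq F$ and $r_G(F)\le r_G(C)+h+1$; and (iii) every rank-$(r-m+1)$ flat $H$ of $G$ containing $C$ satisfies $(H\setminus C)\cap X\ne\varnothing$. A direct rank computation (using that $C$ is skew to $K$) yields the identity $\sqcap_{\wh M\con C}(K,F')=\sqcap_{\wh M}(K,\cl_G(F'\cup C))$ for flats $F'$ of $G\con C$; combined with Lemma~\ref{smoothnessequiv}, conditions (i) and (ii) imply $M\con C\in\cP_q(k,h)$, condition (iii) is the required $X$-coverage, and $r(M\con C)=m$.

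We would construct $C$ greedily, one element at a time. At step $i$, given a rank-$i$ flat $C_i$ satisfying the natural analogues of (i)--(iii), we select $c_{i+1}\in E(G)\setminus C_i$ subject to two constraints. The first excludes $c_{i+1}$ from every ``bad'' flat $F\supseteq C_i$ of $G$ with $r_G(F)\le i+h+2$ and $\sqcap_{\wh M}(K,F)>0$; by Lemma~\ref{ramseyflats}, the number of such flats is bounded by a computable function of $q,k,h$, for otherwise a sunflower among them would force a dependence with $K$ exceeding rank $k$. The second picks $c_{i+1}\in\pi_i(X\setminus C_i)$ so as to minimise the new uncovered set $Y_{i+1}:=E(G/C_{i+1})\setminus\pi_{i+1}(X\setminus C_{i+1})$; writing $\epsilon_i=|Y_i|/|E(G/C_i)|$, a double-count of lines of $G/C_i$ through $c_{i+1}$ gives at a suitable choice $|Y_{i+1}|\le|Y_i|/(q(1-\epsilon_i))$, ensuring $Y_s=\varnothing$ after sufficiently many steps.

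The principal obstacle is to verify both constraints can be met simultaneously within the rank budget $s=r-m$: the bad-flat exclusion reduces the pool of eligible $c_{i+1}$ at each step, and the shrinkage rate for $|Y_i|$ must reach zero quickly enough relative to $r-m$ (particularly tight for small $q$). Resolving this carefully --- likely by a more delicate use of Lemma~\ref{ramseyflats} that lets us control the $\cP_q(k,h)$-preservation in bulk rather than step-by-step, combined with fine tracking of $|\pi_i(X\setminus C_i)|/|E(G/C_i)|$ --- should yield the computable threshold $f_{\ref{getgoodpg}}(q,k,h,m)$.
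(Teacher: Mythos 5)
Your plan never reaches a complete proof and the obstacle you flag at the end is not one that the greedy scheme can overcome as stated. The coverage recursion fails at $q=2$: with $\epsilon_i=|Y_i|/|E(G\con C_i)|$, the best averaging bound on lines through a covered point (count lines with at least $q$ of their $q+1$ points uncovered, divide by the number of covered points) gives $|Y_{i+1}|\le |Y_i|^2/\bigl(q(q-1)(1-\epsilon_i)|E(G\con C_i)|\bigr)$, hence $\epsilon_{i+1}\lesssim \epsilon_i^2/\bigl((q-1)(1-\epsilon_i)\bigr)$. For $q=2$ and the hypothesis $|X|\ge\tfrac12|M|$, i.e.\ $\epsilon_0\le\tfrac12$, the right-hand side is $\ge\epsilon_0$ and the density is stuck at $1/2$ indefinitely: contracting covered points one at a time never forces every parallel class of the quotient to meet $X$. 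Even for $q\ge3$, where the $\epsilon_i$ do decay doubly exponentially, you would still have to argue simultaneously that the pool of eligible $c_{i+1}$ --- those outside $Y_i$ and outside all ``bad'' flats --- is nonempty at every step, and your own averaging estimate is over \emph{all} covered points, not the eligible ones, so the two constraints cannot simply be superimposed. In short, the ``principal obstacle'' you identify is a genuine gap, not a technicality.

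The paper avoids this entirely by calling on a much heavier hammer. Since $|X|\ge\tfrac12|M|\ge\tfrac12 q^{r(\wh M)-1}$ and $G$ has no $U_{2,q+2}$-minor, Theorem~\ref{dhj} (the matroidal density Hales--Jewett theorem) produces an $\AG(m_0-1,q)$-restriction $R$ of $G$ \emph{entirely contained in $X$} (with $m_0=\max(m,(k+1)(h+2)^3+h+2)$). The contraction set $C$ is then chosen to make $R$ spanning while preserving the skewness of $K$ to all low-rank flats; Lemma~\ref{ramseyflats} bounds the number of ``forbidden'' points $J$ that threaten skewness by roughly $q^{m_0}$, which is strictly smaller than $|R|$, so a final contraction of some $e\in E(R)\setminus J$ turns $R$ into a spanning $\PG$ inside $X$ whose points hit every parallel class of $M\con C$. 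The real content --- that a constant-density subset of a projective geometry contains a large affine subgeometry --- is exactly what your local averaging cannot deliver, and it is why Theorem~\ref{dhj} is invoked.
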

	\begin{proof}
		Let $q$ be a prime power and $k,h,m \ge 0$ be integers. Let $m_0 = \max\left(m,(k+1)(h+2)^3+h+2\right)$. Set $f_{\ref{getgoodpg}}(q,k,h,m) = f_{\ref{dhj}}(q,q,(2q)^{-1},m_0)$.
		
		Let $M \in \cP_q(k,h)$ be simple with $r = r(M) \ge f_{\ref{getgoodpg}}(q,k,h,m)$, and let $X \subseteq E(M)$ satisfy $|X| \ge \tfrac{1}{2}|M|$. Let $\wh M$ be a matroid such that $G = \wh M \del K \cong \PG(r+k-1,q)$ and $\wh M \con K = M$ for some $k$-element independent set $K$ of $M$ that is skew in $M$ to every flat of rank at most $h+1$ in $G$. We have $|X| \ge \tfrac{1}{2}|M| \ge \tfrac{1}{2}q^{r(\wh M)-1}$. Since $G$ has no $U_{2,q+2}$-minor, Theorem~\ref{dhj} implies that $G|X$ has an $\AG(m_0,q)$-restriction $R$. 
		
		Let $C_0 \subseteq E(G)$ be maximal so that
		\begin{enumerate}[(i)]
			\item $R$ is a restriction of $G \con C_0$, and
			\item\label{skewness} $K$ is skew to in $\wh M \con C_0$ to every set of rank at most $h+1$ in $G \con C_0$. 
		\end{enumerate}
		
		 Let $J$ be the set of elements $e \in E(G \con C_0)$ such that $C_0 \cup \{e\}$ does not satisfy (\ref{skewness}). 
		\begin{claim}
		 $\elem(M \con C_0 |J) < q^{m_0}$.
		\end{claim}
		\begin{proof}[Proof of claim:]
			Suppose otherwise. Let $G' \cong \PG(r(G')-1,q)$ be a simplification of $G \con C_0$, and let $J' = J \cap E(G')$. For each $e \in J'$ there is some rank-$(h+1)$ flat $H_e$ of $G' \con e$ such that $\sqcap_{G' \con e}(H_e,K) > 0$, and so $F_e = H_e \cup \{e\}$ is a rank-$(h+2)$ flat of $G'$ with $\sqcap_{G'}(F_e,K) > 0$. Let $\cF = \{F_e\colon e \in J'\}$.
			
			Since each flat in $\cF$ has size less than $q^{h+2}$, we have $|\cF| > q^{-h-2}|J'| \ge q^{m_0-h-2} \ge q^{(k+1)(h+2)^3}$. By Lemma~\ref{ramseyflats}, there is a $(k+1)$-element set $\cF' \subseteq \cF$ and a flat $F_0$ of $G'$ such that $F_0 = \cap \cF$ and the flats $\{F - F_0\colon f \in \cF'\}$ are skew in $G' \con F_0$. Since $r_N(F_0) \le h$ we have $\sqcap_{G'}(F_0,K) = 0$, so $\{F-F_0\colon f \in \cF'\}$ is a skew collection of $k+1$ sets in $G'$, none of which is skew to $K$ in $G' \con F_0$. This contradicts the fact that $r_{G' \con F_0}(K) = r_{G'}(K) = k$. 
		\end{proof}
		
		If $R$ is not spanning in $\wh M \con C_0$, then $E(R)$ spans a proper subflat of the projective geometry $G \con C_0$, so there are at least $q^{m_0+1} > \elem(\wh M \con C_0|J)$ points of $G \con C_0$ not spanned by $E(R)$; contracting any such point not in $J$ gives a contradiction to the maximality of $C$. Therefore $R$ is spanning in $\wh M \con C_0$. Similarly, $|R| = q^{m_0} > \elem((\wh M \con C_0)|J)$, so there is some $e \in E(R) - J$; let $C = C_0 \cup \{e\}$. Now $E(R)$ contains a simplification $R'$ of $G \con C$, and $K$ is a rank-$k$ flat of $\wh M \con C$ skew to every flat of $R'$ of rank at most $h+1$; it follows that $M \con C = (\wh M \con (C \cup K)) \in \cP_q(k,h)$. Since $E(R') \subseteq X$ contains a simplification of $G \con C$, the lemma follows. 
	\end{proof}
			
	\subsection{Recognition}

We now prove a result that will identify matroids in $\cP_q(k)$. We use the fact (see [\ref{oxley}], Proposition 7.3.6) that, if $M$ and $N$ are matroids on a common ground set $E$, then $N$ is a projection of $M$ if and only if $\cl_M(X) \subseteq \cl_N(X)$ for all $X \subseteq E$. Since we are concerned with isomorphism and simplification as well, it is convenient to give a slightly different version of this statement. If $M$ and $N$ are matroids then we say that  $\varphi\colon E(M) \to E(N)$ is a $\emph{projective map}$ from $M$ to $N$ if $\varphi(\cl_M(X)) \subseteq \cl_N(\varphi(X))$ for all $X \subseteq E(M)$. It is routine to show that, if $\varphi$ is a projective map from $M$ to $N$, then the matroid $N|\varphi(E(M))$ is isomorphic to the simplification of a projection of $M$; the set of elements of $M$ that map to a given element corresponds to a parallel class in this projection of $M$.

 We use this to identify projections of projective geometries, as well as projections of representable matroids that are the union of at most two flats of a projective geometry. If $G \cong \PG(n-1,q)$ in the following lemma, then the conclusion gives $M \in \cP_q(k)$ for some $k$. 

\begin{lemma}\label{recognisepqk}
	Let $M$ be a simple matroid and let $G$ be a restriction of $\PG(n-1,q)$ such that $E(G)$ is the union of at most two flats of $\PG(n-1,q)$. If there is a surjective map $\varphi \colon E(G) \to E(M)$ such that, for every triangle $T$ of $G$, either $|\varphi(T)| = 1$ or $\varphi(T)$ is a triangle of $M$, then $M$ is, up to simplification, a projection of $G$. 
\end{lemma}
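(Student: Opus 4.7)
The plan is to show that $\varphi$ is a \emph{projective map} from $G$ to $M$, i.e.\ that $\varphi(\cl_G(X)) \subseteq \cl_M(\varphi(X))$ for every $X \subseteq E(G)$. Once this is done, the criterion stated immediately before the lemma gives that $M|\varphi(E(G))$ is isomorphic to the simplification of a projection of $G$; surjectivity of $\varphi$ identifies $M|\varphi(E(G))$ with $M$, which is the conclusion.

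I first handle the case in which $E(G)$ is a single flat of $\PG(n-1,q)$, so that $G$ itself is isomorphic to a projective geometry. Every pair of distinct points of $G$ lies on a line of $q+1 \ge 3$ points, hence on a common triangle, so the triangle hypothesis yields $\varphi(T) \subseteq \cl_M(\{\varphi(a),\varphi(b)\})$ for every triangle $T$ of $G$ and every $\{a,b\} \subseteq T$ (this includes the degenerate case $|\varphi(T)|=1$). Given $X \subseteq E(G)$ and $e \in \cl_G(X) \setminus X$, pick a circuit $\{e,b_1,\dotsc,b_t\} \subseteq X \cup \{e\}$ of $G$; a Gaussian-elimination argument in the projective geometry yields a sequence $e_1 = b_1, e_2, \dotsc, e_t = e$ in $E(G)$ such that $\{e_{i-1},b_i,e_i\}$ is a triangle of $G$ for each $2 \le i \le t$, and applying the previous observation along this sequence gives $\varphi(e) \in \cl_M(\varphi(X))$.

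For the general case, write $E(G) = F_1 \cup F_2$ with $F_1,F_2$ flats of $\PG(n-1,q)$ and $X = X_1 \cup X_2$ with $X_i = X \cap F_i$. The key step is the modular identity
\[
\cl_{\PG}(X) \cap F_1 \;=\; \cl_{\PG}\bigl(X_1 \cup (\cl_{\PG}(X_2) \cap F_1)\bigr),
\]
which I verify by observing that both sides are flats of $\PG(n-1,q)$, that the right-hand side is contained in the left, and that the modular rank formula $r(A \cap B) = r(A) + r(B) - r(A \cup B)$ applied to the modular pairs $(\cl_{\PG}(X),F_1)$ and $(\cl_{\PG}(X_1),\cl_{\PG}(X_2))$ forces their ranks to be equal. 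Since $\cl_{\PG}(X_2) \subseteq F_2$, the set $\cl_{\PG}(X_2) \cap F_1$ lies in $\cl_{G|F_2}(X_2)$, so the single-flat case applied to the projective geometry $G|F_2$ gives $\varphi(\cl_{\PG}(X_2) \cap F_1) \subseteq \cl_M(\varphi(X_2)) \subseteq \cl_M(\varphi(X))$. Combined with $\varphi(X_1) \subseteq \cl_M(\varphi(X))$ and the single-flat case applied inside $G|F_1$, this yields $\varphi(\cl_{\PG}(X) \cap F_1) \subseteq \cl_M(\varphi(X))$; the symmetric argument handles $F_2$, and $\cl_G(X) = (\cl_{\PG}(X) \cap F_1) \cup (\cl_{\PG}(X) \cap F_2)$ finishes the proof.

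The main obstacle is the modular identity above: lines of $\PG(n-1,q)$ joining $F_1 \setminus F_2$ to $F_2 \setminus F_1$ may meet $E(G)$ in only two points and so give no triangles of $G$, so closure cannot be propagated across the union directly by triangle-building. Modularity of $\PG(n-1,q)$ is exactly what allows us instead to route closure through $F_1 \cap F_2$, where the single-flat triangle argument applies; the hypothesis that $E(G)$ is a union of at most \emph{two} flats is used precisely to make this routing possible in a single step.
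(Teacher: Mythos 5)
Your proof is correct and follows essentially the same route as the paper's: both exploit the modularity of the flats of $\PG(n-1,q)$ together with a triangle-chain argument to propagate closure, and then invoke the projective-map criterion from Oxley's Proposition 7.3.6. The only difference is organizational: you isolate an explicit single-flat base case (built via a Gaussian-elimination chain of triangles) and an explicit modular identity $\cl_{\PG}(X) \cap F_1 = \cl_{\PG}(X_1 \cup (\cl_{\PG}(X_2) \cap F_1))$ to route closure through $F_1 \cap F_2$, whereas the paper folds these into a single inductively-built sequence $Y_0 \subseteq Y_1 \subseteq \dotsb$ adding one triangle-element at a time; your version makes the use of modularity slightly more transparent at the cost of a little more bookkeeping.
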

\begin{proof}
	It suffices to show that $\varphi(\cl_G(X)) \subseteq \cl_M(\varphi(X))$ for all $X \subseteq E(G)$. Let $X \subseteq E(G)$ and let $F_1,F_2$ be flats of $\PG(n-1,q)$ so that $E(G) = F_1 \cup F_2$. Let $X_i = X \cap F_i$. It is easy to see that every element of $\cl_G(X_1 \cup X_2)$ is either in $\cl_G(X_1) \cup \cl_G(X_2)$, or lies in a triangle of $G$ containing a point of $\cl_G(X_1)$ and a point of $\cl_G(X_1)$. It follows routinely that there is a sequence of sets 
	\[X_1 \cup X_2 = Y_0,Y_1,Y_2, \dotsc, Y_m = \cl_G(X_1 \cup X_2)\] so that, for each $i \in \{1, \dotsc, m\}$, we have $Y_i = Y_{i-1} \cup \{e_i\}$ for some $e_i$ that is contained in a triangle of $G|Y_i$. An easy inductive argument implies that $\varphi(Y_i) \subseteq \cl_M(\varphi(X_1 \cup X_2))$ for each $i$. Therefore $\varphi(\cl_G(X_1 \cup X_2)) \subseteq \cl_M(\varphi(X_1 \cup X_2))$, as required. 
	\end{proof}
	
	We now prove the lemma allowing us to recognise general matroids in $\cP_q(k)$ from a modularity assumption.

\begin{lemma}\label{recognition}
	Let $q$ be a prime power, and $\ell \ge 2$ and $j,t \ge 0$ be integers. Let $s = 10\ell+t$. If $M$ is a matroid of rank at least $2s+t$ with no $U_{2,\ell}$-restriction, and $K$ is a rank-$t$ subset of $E(M)$ such that $M \con K \in \cP_q(j,s)$, and for every rank-$(t+1)$ flat of $M$ containing $K$ and every line $L$ of $M$, the pair $(L,K)$ is modular, then there exists $D \subseteq \cl_M(K)$ such that $M \del D \in \cP_q(k)$ for some $k$. 
\end{lemma}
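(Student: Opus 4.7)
The plan is to apply the recognition criterion Lemma~\ref{recognisepqk}. By $M \con K \in \cP_q(j,s)$ and Lemma~\ref{smoothnessequiv}, first obtain a witness matroid $\wh{N}$ with a $j$-element independent flat $J$ such that $G := \wh{N} \del J \cong \PG(r(M)-t+j-1, q)$, the matroid $\wh{N} \con J$ simplifies to $M \con K$, and $J$ is skew in $\wh{N}$ to every flat of $G$ of rank at most $s+1$. Under this projection, each point $p \in E(G)$ corresponds bijectively to a rank-$(t+1)$ flat $F_p$ of $M$ containing $\cl_M(K)$ (namely, the flat representing the corresponding parallel class of $M \con K$).

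The strategy is to take $D$ to be essentially $\cl_M(K)$ and verify $M \del D \in \cP_q(j)$. The key step is to show that $\cl_M(K)$ is a modular flat of $M$. Given this, the restriction $M \del \cl_M(K)$ has rank $r(M) - t$, and for each $p \in E(G)$ the set $F_p \del \cl_M(K)$ has rank $1$ in $M \del \cl_M(K)$ — hence it is a single parallel class, yielding a bijection $E(G) \to E(\si(M \del \cl_M(K)))$. Define $\varphi \colon E(G) \to E(\si(M \del \cl_M(K)))$ via this bijection.

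To verify the triangle condition of Lemma~\ref{recognisepqk}, take a triangle $\{p_1, p_2, p_3\}$ of $G$: by the smoothness $s \ge 2$, the rank-$2$ flat of $G$ through these three points lifts to a rank-$2$ line of $\si(M \con K)$, so the three flats $F_{p_i}$ span a rank-$(t+2)$ flat $F^*$ of $M$. By modularity of $\cl_M(K)$, $F^* \del \cl_M(K)$ has rank $2$ in $M \del \cl_M(K)$, and $\varphi(p_1), \varphi(p_2), \varphi(p_3)$ are three distinct parallel classes on this rank-$2$ line, hence form a triangle. Lemma~\ref{recognisepqk} then gives that $\si(M \del \cl_M(K))$ is a projection of $G$, so $M \del \cl_M(K) \in \cP_q(j)$; set $D := \cl_M(K)$ and $k := j$.

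The main obstacle is rigorously establishing the modularity of $\cl_M(K)$ (or, more realistically, exhibiting a small exceptional set $D \subsetneq \cl_M(K)$ whose removal restores modularity in $M \del D$). The hypothesis only gives a line-modularity condition on rank-$(t+1)$ flats above $K$, which is a priori weaker than full modularity of those flats, let alone of $\cl_M(K)$. Propagating the modularity upward in the lattice is expected to use the $\GF(q)$-representability of small flats of $M \con K$ supplied by the smoothness parameter $s$ in $\cP_q(j,s)$, together with the no-$U_{2,\ell}$-restriction hypothesis, while the large-rank bound $r(M) \ge 2s+t$ with $s = 10\ell+t$ provides enough room for a Ramsey-type surgery (in the spirit of Lemma~\ref{ramseyflats}) to isolate exceptional elements of $\cl_M(K)$ contributing to $D$.
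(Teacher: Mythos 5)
Your proposal takes a genuinely different route from the paper, but it has a gap you yourself acknowledge and several structural errors that I think make the route non-viable as stated.

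The paper does not try to establish modularity of $\cl_M(K)$ and then delete it. Instead it takes a rank-$2s$ flat $F_0$ skew to $K$, extends it to a $\subseteq$-maximal set $X_0 \supseteq F_0$ with $M|X_0 \in \cP_q(k)$, and shows by contradiction that $E(M) = X_0 \cup \cl_M(K)$: assuming $e \notin X_0 \cup \cl_M(K)$, it uses the local-representability of $M \con K \in \cP_q(j,s)$ and the line/flat modularity hypothesis to build a projective map $\varphi$ from a \emph{larger} projective geometry (of rank exceeding $r(M|X_0)$, not rank $r(M)-t+j$) onto $X_0 \cup \{e\}$, contradicting maximality via Lemma~\ref{recognisepqk}. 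So the paper's $D = E(M) - X_0 \subseteq \cl_M(K)$ need not be all of $\cl_M(K)$, and the resulting $k$ is the one produced by the recognition step, not $j$.

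Besides the admitted gap (you never prove $\cl_M(K)$ modular, and you yourself flag this as "the main obstacle"), your plan has concrete inaccuracies. First, $M \del \cl_M(K)$ should have rank $r(M)$, not $r(M) - t$: deleting the flat $\cl_M(K)$ does not drop rank (the elements outside it still span $M$ given $r(M) \ge 2s + t$; you seem to be reasoning as if you were contracting). This means $\si(M \del \cl_M(K))$ cannot be a projection of your rank-$(r(M)-t+j)$ geometry $G$, since projections preserve ground set and decrease rank. Second, the map from $E(G)$ to the points of $\si(M\con K)$ is the projective map realizing the $j$-element projection, hence a \emph{surjection}, not a bijection (unless $j=0$); so your correspondence "$p \mapsto F_p$" is not a bijection and the intended $\varphi$ is not well-defined as described. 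Third, the claim that $F_p \del \cl_M(K)$ has rank $1$ in $M \del \cl_M(K)$ fails even in the simplest case $M \cong \PG(n,q)$ with $K$ a single point: a line $F_p$ through $K$ has $q$ points off $K$, and they span a rank-$2$ set after deleting $K$. Modularity of $\cl_M(K)$, even if proved, would not salvage this. The recognition step must target a geometry of rank $r(M|X_0) + k$ covering the restriction $M \del D$, as the paper does, not the smaller geometry underlying $M \con K$.
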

\begin{proof}
	We may assume that $M$ is simple and that $K$ is a flat of $M$. 
	
	\begin{claim}
		Every flat $F$ of $M$ with $r_M(G) \le 2s$ and $\sqcap_M(F,K) = 0$ satisfies $M|F \cong \PG(r_M(F)-1,q)$.
	\end{claim}
	\begin{proof}[Proof of claim:]
		Let $r =r_M(F) \le 2s$. Now $M|F$ is a rank-$r$ restriction of $(M\con K)|\cl_{M \con K}(F)$, which is a rank-$r$ flat of a matroid in $\cP_q(j,s)$ so is isomorphic to $\PG(r-1,q)$. Moreover, for all distinct $e,f \in F$, the line $\cl_M(\{e,f\})$ intersects $\cl_M(K \cup \{x\})$ for all $x \in \cl_{M \con K}(\{e,f\})$, as otherwise this pair of flats fails to be modular. Therefore $M|F$ is a simple rank-$r$ restriction of $\PG(r-1,q)$ in which every line contains at least $q+1$ points; it follows that $M|F \cong \PG(r-1,q)$.
		\end{proof}
	
	Let $F_0$ be a rank-$2s$ flat of $M$ that is skew to $K$, and let $X_0 \subseteq E(M)$ be maximal such that $F_0 \subseteq X_0$ and $M|X_0 \in \cP_q(k)$ for some $k$. Let $G_0 \cong \PG(n-1,q)$, and let $\varphi_0\colon E(G_0) \to X_0$ be the surjective map associated with this projection. Let $G \cong \PG(n,q)$ be an extension of $G_0$. If $E(M) = X_0 \cup K$, then the lemma clearly holds, so we may assume that there is some $e \in E(M) - (X_0 \cup K)$.  
	
	 \begin{claim}
	 	There is a rank-$s$ flat $\wh{X}$ of $M$, containing $e$ and skew to $K$, so that $M|(\wh{X} \cap X_0) \cong \PG(s-2,q)$ and $M|\wh{X} \cong \PG(s-1,q)$. 
	 \end{claim}
	 \begin{proof}[Proof of claim:]
	 	Note that $r_M(X_0) \ge 2s \ge s + t$; let $Y$ be a rank-$(s-1)$ flat of $M|X_0$ that is skew to $K$ in $M \con e$. Since $M|X_0 \in \cP_q(k)$, the matroid $M|Y$ has a $\PG(s-2,q)$-restriction. However, $M|Y$ is contained in a $\PG(s-2,q)$-restriction of $M$ by the first claim, so $M|Y \cong \PG(s-2,q)$ and $Y$ is a flat of $M$. Let $\wh X = \cl_M(Y \cup \{e\})$. Now $\wh X$ is skew to $K$ in $M$, and by the first claim, $M|\wh{X} \cong \PG(s-1,q)$, and $Y$ is a hyperplane of $M|\wh X$. If $f \in (\wh{X} - Y) \cap X_0$, then $\cl_M(\{e,f\})$ meets $Y$ in some $f_0$, so $e \in \cl_M(\{f,f_0\}) - X_0$. Since $\cl_M(\{f,f_0\})$ is a line of $X_0$, it contains $q+1$ points of $X_0$, so contains at least $q+2$ points of $M$. Since $\cl_M(\{f,f_0\})$ is skew to $K$, this contradicts the first claim. Therefore $Y = \wh X \cap X_0$, and the claim follows.
	 \end{proof}
	
	Let $\wh{G}$ be a $\PG(s-1,q)$-restriction of $G$ so that $G|(E(\wh{G}) \cap E(G_0)) \cong \PG(s-2,q)$. Let $\whp\colon E(\wh{G}) \to \wh X$ be a matroid isomorphism between $\wh{G}$ and $M|\wh{X}$ so that $\whp(x) = \varphi(x)$ for all $x \in E(\wh{G}) \cap E(G_0)$. Let $a = \whp^{-1}(e)$, noting that $a \notin E(G_0)$. 
	
	\begin{claim}\label{jdef}
		For each $b \in E(G)-\{a\}$, there is some $x(b) \in E(M)$ and a $10$-element independent set $J(b)$ of $\wh{G} \del a$ such that
		\begin{enumerate}[(i)]
		\item\label{jdefi} $x(b) \in \cl_M(\{e,\varphi_0(b_0)\})$ where $\{b_0\} = \cl_G(\{a,b_0\}) \cap E(G_0)$, and $x(b) = \varphi_0(b_0)$ if and only if $b = b_0$, and 
		\item\label{jdefii}  for all $c \in J(b)$, there is a triangle $\{c,d_0,b\}$ of $G$ such that $d_0 \in E(G_0)$ and $\{\whp(c),\varphi_0(d_0),x(b)\}$ is a triangle of $M$, and
		\item\label{jdefiii} if $b \in E(\wh{G})$ then $x(b) = \whp(b)$, and if $b \in E(G_0)$ then $x(b) = \varphi_0(b)$.
		\end{enumerate}
	\end{claim}
	\begin{proof}[Proof of claim:]
		If $b \in E(G_0)$ then set $x(b) = \varphi_0(b)$ and set $J(b)$ to be an arbitrary $10$-element independent set of $G_0 \del b$; the conditions follow easily for $x(b)$ and $J(b)$ since $b \notin J(b)$ and $\varphi_0$ is a projective map. If $b \in E(\wh G) - E(G_0)$, then set $x(b) = \whp(b)$ and $J(b)$ to be an arbitrary $10$-element independent set of $\wh G \del (\cl_G(\{a,b\}) \cup (E(G_0) \cap E(\wh G)))$; again, the conditions easily follow from the fact that $\whp$ is an isomorphism and $E(G_0) \cap E(\wh G)$ is a hyperplane of $\wh G$. 
		
		Suppose that $b \in E(G) - (E(G_0) \cup E(\wh G))$. Let $\{b_0\} = \cl_G(\{a,b\}) \cup E(G_0)$. Note that $r(\wh G \del (E(G_0) \cap E(\wh G)) = s \ge 10\ell+t \ge 9\ell+t+3$; let $J$ be a $(9\ell+t+2)$-element independent set of $\wh G \con a$ that is disjoint to $E(G_0)$. we will choose $J(b)$ to be an appropriate subset of $J$.  Let $c \in J$, and let $\{c_0\} = \cl_G(\{a,c\}) \cap E(G_0)$. The set $\{c_0\colon c \in J\}$ is a $|J|$-element independent set of $\wh{G}$, so there is at most one $c \in J$ for which $\varphi_0(c_0) = \varphi_0(b_0)$; let $J' \subseteq J$ be a $(9\ell+t+1)$-element set containing no such $c$. Let $c \in J'$, and $\{d_0\} = \cl_G(\{c,b\}) \cap E(G_0)$. Since $\whp$ is an isomorphism, the set $\whp(\{c_0,c,a\})$ is a triangle of $M$. Since $\varphi_0$ is projective and $b_0 \ne c_0$, the set $\varphi_0(\{c_0,d_0,b_0\})$ is also a triangle of $M$.  Now $e \notin \cl_M(\varphi_0(G_0))$, so the latter triangle does not span $e$ in $M$; hence, $\cl_M(\{\whp(c),\varphi_0(d_0)\})$ and $\cl_M(\{e,\varphi_0(b_0)\})$ are distinct coplanar lines of $M$. 
		
		If $\{e,\varphi_0(b_0)\}$ is not skew to $K$ in $M$, then it is contained in a rank-$(t+1)$ flat of $M$ containing $K$, and it follows from the hypothesis that for each $c \in J'$, the coplanar lines $\cl_M(\{e,\varphi_0(b_0)\})$ and $\cl_M(\{\whp(c),\varphi_0(d_0)\})$ form a modular pair, and so intersect at some $x_c \in E(M)$. If $\{e,\varphi_0(b_0)\}$ is skew to $K$ in $M$, then since $M|\whp(J') \cong \wh G|J'$, the set $\whp(J')$ is independent in $M$, so there is a $(9\ell+1)$-element subset $J''$ of $J'$ such that the plane $\cl_M(\{e,\varphi_0(b_0),\whp(c)\})$ is skew to $K$ in $M$ for each $c \in J''$. By the first claim, the restriction of $M$ to such a plane is isomorphic to $\PG(2,q)$, and so again the lines $\cl_M(\{\whp(c),\varphi_0(d_0)\})$ and $\cl_M(\{e,\varphi_0(b_0)\})$ intersect at a point $x_c \in E(M)$. 
		
		In either case above, there is a $(9\ell+1)$-element subset $J'$ of $J$ such that the point $x_c \in \cl_M(\{e,\varphi_0(c_0)\})$ is well-defined for all $c \in J'$. This line contains at most $\ell$ elements of $M$, so there is a $10$-element set $J(b) \subseteq J''$ and some $x \in E(M)$ so that $x_c = x$ for all $c \in J(b)$. Since $\{e,\whp(c),\varphi_0(c_0)\}$ and $\{\varphi_0(c_0),\varphi_0(d_0),\varphi_0(b_0)\}$ are non-collinear triangles of $M$ and $\{\whp(c),\varphi_0(d_0),x(b)\}$ is a triangle of $M$, we have $x(b) \ne \varphi_0(b_0)$. Now (\ref{jdefi}),(\ref{jdefii}) and (\ref{jdefiii}) follow from construction. 
		\end{proof}
	
	Define $\varphi\colon E(G) \to E(M)$ by $\varphi(a) = \{e\}$ and $\varphi(b) = x(b)$ for each $b \in E(G)-a$, noting that $\varphi$ is an extension of both $\whp$ and $\varphi_0$. 
	
	\begin{claim}
		For each triangle $T$ of $G$, either $|\varphi(T)| = 1$ or  $\varphi(T)$ is a triangle of $M$. 
	\end{claim}
	\begin{proof}[Proof of claim:] This is clear if $T \subseteq E(\wh{G}) \cup E(G_0)$, since any such $T$ is a triangle of either $\wh{G}$ or $G_0$, and both $\wh{\varphi}$ and $\varphi_0$ are projective maps. It follows from this observation and Lemma~\ref{recognisepqk} that $\varphi|(E(G_0) \cup E(\wh{G}))$ is a projective map from $G|(E(G_0) \cup E(\wh{G}))$ to $M$.
	
	Let $T = \{b^1,b^2,b^3\}$ be a triangle of $G$. If $b^1 = a$, then let $\{b_0\} = E(G_0) \cap \cl_G(T)$. Note that $\varphi(b_0) \in X_0$ and $\varphi(a) = e \notin X_0$, and that $b^2$ and $b^3$ are not both $b_0$; it follows from \ref{jdef}(\ref{jdefi}) that $\varphi(T)$ is a triangle of $G$. We may thus assume that $a \notin T$. 
	
	Let $i \in \{1,2,3\}$. We have $M|\varphi(J(b^i)) \cong \wh{G } |J(b^i)$, so $\varphi(J(b^i))$ is a $10$-element independent set of $M$. We can therefore choose elements $c^i\colon i \in \{1,2,3\}$ so that $c^i \in J(b^i)$ for each $i$, and so that 
	\begin{itemize}
		\item $c^i \notin \cl_G(T \cup \{c_1,\dotsc,c_{i-1}\})$ for each $i \in \{1,2,3\}$, and
		\item $\varphi(c^i) \notin \cl_M(\varphi(T) \cup \{\varphi(c^1),\dotsc,\varphi(c^{i-1})\})$ for each $i \in \{1,2,3\}$. 
	\end{itemize}
	(This choice is possible since for each $i$ the set of invalid $c^i$ has rank at most $4$ in $G$ and the set of invalid $\varphi(c^i)$ has rank at most $5$ in $M$.)
	
	For $i \in \{1,2,3\}$, let $x^i = \varphi(b^i)$, let $y^i = \varphi(c^i)$, and let $T^i = \{c^i,d^i_0,b^i\}$ be a triangle of $G$ so that $d^i_0 \in E(G_0)$ and $\varphi(T^i)$ is a triangle of $M$. Let $z^i = \varphi(d^i_0)$ for each $i$, and let $W = \{c^1,c^2,c^3,d^1_0,d^2_0,d^3_0\}$. By construction of the $c^i$, we have $G|W \cong U_{5,6}$.
	
	If $\varphi(T)$ is not a triangle or singleton of $M$, then either $M|\varphi(T) \cong U_{3,3}$ or $M|\varphi(T) \cong U_{2,2}$. In the former case, we have $r_M(\varphi(T) \cup \{y^1,y^2,y^3\}) = 6$ so, since $\{x^i,y^i,z^i\}$ is a triangle of $M$ for each $i$, we also have $M|\varphi(W) \cong U_{6,6}$. If $M|\varphi(T) \cong U_{2,2}$, then we may assume that $x^1=x^2\ne x^3$. By choice of $y^1,y^2,y^3$, we therefore have $M|\varphi(W) \cong U_{3,4} \oplus U_{2,2}$, where $\{y^1,y^2,z^1,z^2\}$ is the four-element circuit. Neither $U_{6,6}$ nor $U_{3,4} \oplus U_{2,2}$ is projection of $M|W \cong U_{5,6}$; since $W \subseteq E(G_0) \cup E(\wh{G})$, we thus have a contradiction to the fact that $\varphi |E(G_0) \cup E(\wh{G})$ is a projective map from $G|(E(G_0) \cup E(\wh{G}))$ to $M$.  
	\end{proof}
	It follows from Lemma~\ref{recognisepqk} that $M|\varphi(E(G)) \in \cP_q(k)$ for some $k$; since $X_0 \cup \{e\} \subseteq \varphi(E(G))$, this contradicts the maximality of $X_0$. 
	\end{proof}
	


\section{Exponentially Dense Classes}

	Let $\fE_q$ denote the collection of all base-$q$ exponentially dense minor-closed classes of matroids. Each class in $\bE_q$, by definition, does not contain arbitrarily long lines or arbitrarily large projective geometries over fields larger than $\GF(q)$. Such classes are also `controlled' in other ways that we describe in the following lemma, stating them all in terms of a single parameter $c$ for convenience.  We will refer freely to the parameter $c = c_{\cM}$ for a given $\cM\in \bE_q$ throughout the paper. 
	
	\begin{lemma}\label{controlclass}
		For every $\cM \in \fM_q$ there is an integer $c = c_{\cM}$ so that 
		\begin{enumerate}
			\item\label{c1} $U_{2,c+2} \notin \cM$.
			\item\label{c2} $\PG(c-1,q') \notin \cM$ for all $q' > q$,
			\item\label{c3} $\elem(M) < q^{r(M)+c}$ for all $M \in \cM$,
			\item\label{c4} $\elem(M \con C) > q^{-c-r_M(C)}\elem(M)$ for all $M \in \cM$ and $C \subseteq E(M)$, and
			\item\label{c5} $\cM \cap \cP_q(k) = \varnothing$ for all $k \ge c$.
		\end{enumerate}
		Moreover, there is a computable function $f_{\ref{controlclass}}\colon \bZ^3 \to \bZ$ so that, if $\ell \ge 2$ and $s \ge 2$ are integers such that $U_{2,\ell+2} \notin \cM$ and $\PG(s-1,q') \notin \cM$ for all $q' > q$, then $c_{\cM} \le f_{\ref{controlclass}}(q,\ell,s)$.  
	\end{lemma}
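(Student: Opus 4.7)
The plan is to choose a single integer $c = c_\cM$ satisfying (1)--(5) simultaneously, as the maximum of separately-derived bounds, and verify that each bound is a computable function of $q, \ell, s$. Conditions (1) and (2) hold as soon as $c \ge \max(\ell, s)$, using that $\cM \in \fE_q$ forces the existence of such $\ell$ and $s$ (by Theorem~\ref{grt}: $h_\cM(2) < \infty$, and the presence of arbitrarily large $\PG(s-1, q')$ for $q' > q$ would force base-$q'$ rather than base-$q$ growth). For (3), apply Theorem~\ref{gk} with these $\ell, s$ to conclude that $\elem(M) < f_{\ref{gk}}(q,\ell,s)\, q^{r(M)}$ for all $M \in \cM$, so taking $c \ge \log_q f_{\ref{gk}}(q,\ell,s) + 1$ suffices.

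Property (4) follows from (1) and (3) by a flat-counting decomposition. For $M \in \cM$ and $C \subseteq E(M)$ with $j = r_M(C)$, the rank-$(j{+}1)$ flats of $M$ containing $\cl_M(C)$ are in bijection with the points of $M \con C$ and cover $E(M)$ with pairwise intersection $\cl_M(C)$. Applying (3) to each such flat yields
\[
\elem(M) - \elem(M\mid\cl_M(C)) \le \elem(M \con C) \cdot q^{j+1+c_3},
\]
which after absorbing the boundary term gives (4) for $c$ slightly larger than $c_3$. Property (5) is the main structural step: suppose $M \in \cM \cap \cP_q(k)$ has rank $\ge 2$, with cover $(\wh M, K)$ so that $\wh M \setminus K \cong \PG(r+k-1, q)$ and $\si(M) = \si(\wh M \con K)$. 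I will exhibit a rank-$2$ restriction of $M$ that itself lies in $\cP_q(k)$. Since $\wh M \setminus K$ spans $\wh M$, there is a $k$-element subset $S \subseteq E(\wh M) - K$ with $K \subseteq \cl_{\wh M}(S)$; extending $S$ inside the projective geometry $\wh M \setminus K$ by two generic elements produces a rank-$(k{+}2)$ flat $F$ of $\wh M \setminus K$ with $\cl_{\wh M}(F) = F \cup K$. The restriction $\wh M\mid(F \cup K)$ is then a cover witnessing $M\mid F \in \cP_q(k)$, and $M\mid F$ has rank $2$. Lemma~\ref{projectiondensity}(\ref{dumbbound}) then gives $\elem(M\mid F) \ge q^{k/2}$, but $M\mid F \cong U_{2, \elem(M\mid F)}$ is a minor of $M$ lying in $\cM$, so (1) forces $q^{k/2} \le c+1$. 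Hence $k \le 2\log_q(c{+}1)$, and taking $c$ large enough that $c > 2\log_q(c{+}1)$ makes (5) self-consistent.

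Since every bound above is a computable function of $q, \ell, s$ (using computability of $f_{\ref{gk}}$), combining them yields the required $f_{\ref{controlclass}}$. The main technical obstacle is the structural step in (5): producing a rank-$(k{+}2)$ flat of the projective geometry $\wh M \setminus K$ that additionally spans $K$ inside $\wh M$, so that the resulting rank-$2$ restriction of $M$ genuinely has the correct cover to witness $\cP_q(k)$-membership. Once this flat is in hand, the dumb density bound of Lemma~\ref{projectiondensity}(\ref{dumbbound}) together with the line-length restriction (1) immediately bounds $k$ in terms of $c$.
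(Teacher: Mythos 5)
Your treatment of (1)--(4) is essentially the same as the paper's: (1) and (2) from the definition of $\fE_q$, (3) from Theorem~\ref{gk}, and (4) by covering $E(M)$ with the rank-$(r_M(C)+1)$ flats containing $C$. Those are fine. The problem is the argument for (5), which is where the real content lies, and your approach there has a genuine gap.

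You want to exhibit a rank-$2$ restriction $M|F$ of $M$ lying in $\cP_q(k)$, via a rank-$(k+2)$ flat $F$ of $G = \wh M\del K$ with $K \subseteq \cl_{\wh M}(F)$. First, the intermediate step is logically impossible as written: you ask for a $k$-element set $S \subseteq E(\wh M) - K$ with $K \subseteq \cl_{\wh M}(S)$. Since $K$ is an independent \emph{flat} of rank $k$, any rank-$k$ set whose closure contains $K$ has closure equal to $K$ itself; but $S$ is disjoint from $K$. So no such $S$ exists for $k \ge 1$. Second, even setting the size constraint aside, the flat $F$ you want need not exist at all: if $\wh M$ is obtained from $G$ by $k$ successive free extensions (so $M$ is an iterated truncation of a projective geometry), then no proper flat of $G$ spans any element of $K$ in $\wh M$, hence any $F \subsetneq E(G)$ has $\cl_{\wh M}(F) = F$. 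Consequently $M = T^k(\PG(r+k-1,q))$ is a rank-$r$ matroid in $\cP_q(k)$ whose every line has exactly $q+1$ points, so it contains no long line at all, and the conclusion ``$q^{k/2} \le c+1$'' cannot be extracted from (1). This is precisely the phenomenon that makes truncations a distinguished obstacle later in the paper.

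The paper handles (5) with the opposite direction of restriction: by Lemma~\ref{projectiondensity}(\ref{restr}), $M$ has a \emph{spanning} restriction in $\cP_q(k')$ for any $k' \le k$. Choosing $k'$ of moderate size (on the order of $d$, the logarithm of the density bound from Theorem~\ref{gk}) and applying the density lower bound of Lemma~\ref{projectiondensity}(\ref{density}) to this spanning restriction yields $\elem(M) > q^{r(M)+d}$ once $k$ (hence $k'$) and $r(M)$ are large enough, contradicting condition (3). You should replace your rank-$2$-restriction argument with this spanning-restriction density comparison; the rest of your plan, including the computability accounting, then goes through.
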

	\begin{proof}
		Since $\cM \in \fE_q$, there are integers $\ell,s \ge 2$ such that $U_{2,\ell+2} \notin \cM$ and $\PG(s-1,q') \notin \cM$ for all $q' > q$. (This is true because $U_{2,q'+1} \cong \PG(1,q') \notin \cM$ for all $q' > \ell$, and $\cM$ does not contain all $\GF(q')$-representable matroids for any $q' \in \{q+1,\dotsc,\ell\}$). We show that $c$ can be defined to depend computably on $q$ and these two parameters; indeed, let $d = \left\lceil \log_q(f_{\ref{gk}}(q,\ell,c_2)) \right\rceil$ and set $c = \max(\ell,s,q^{2d+4})$.
		
		Clearly $c$ satisfies (\ref{c1}) and (\ref{c2}) and, by  Theorem~\ref{gk}, we have $\elem(M) \le q^{r(M)+d} < q^{r(M)+c}$ for all $M \in \cM$, so $c$ satisfies (\ref{c3}). Let  $C \subseteq E(M)$ and $\cF$ be the collection of rank-$(r_M(C)+1)$ flats of $M$ containing $C$; we have $\elem(M) < \sum_{F \in \cF} \elem(M|F) \le \elem(M \con C)q^{r_M(C)+d+1} < q^{r_M(C)+c}\elem(M \con C)$, so $c$ satisfies (\ref{c4}).  
		
		Suppose that $k \ge q^{2d+4}$ and $M \in \cM \cap \cP_q(k)$. By Lemma~\ref{projectiondensity} we have $q^{r(M)+d} > \elem(M) \ge q^{2d+4}$, so $r(M) \ge d+4$. Now $M$ has a spanning restriction in $\cP_q(d+2)$, giving 
		\[\elem(M) \ge \tfrac{q^{r(M)+d+2}-1}{q-1} - q\tfrac{q^{2(d+2)}-1}{q^2-1} > q^{r(M)+d+1}-q^{2d+4} > q^{r(M)+d},\] a contradiction. Since $c \ge q^{2d+4}$, it follows that $c$ satisfies (\ref{c5}). 
	\end{proof}
	
	\subsection{Connectivity}
	
	A rank-$r$ matroid is \emph{weakly round} if every cocircuit has rank at least $r-1$. This is a very strong connectivity property that is a slight relaxation of \emph{roundness}, a notion introduced by Kung  (where cocircuits are required to be spanning) under the name of \emph{non-splitting} in [\ref{kungroundness}]. Our first lemma shows that an exponentially dense matroid of large rank in a class in $\bE_q$ has a comparably dense restriction of large rank that is also weakly round: 
	
	\begin{lemma}\label{roundness}
		There is a computable function $f_{\ref{roundness}}\colon \bZ^4 \to \bZ$ so that, for every prime power $q$ and all integers $c,b,m \ge 0$, if $\cM \in \fE_q$ satisfies $c_{\cM} \le c$ and $g\colon \bZ \to \bZ$ is a function so that $g(n) > qg(n-1) \ge q^{b-1}$ for all $n > b$, then every $M \in \cM$ satisfying $r(M) \ge f_{\ref{roundness}}(q,c,b,m)$ and $\elem(M) \ge g(r(M))$ has a weakly round restriction $M'$ of rank at least $m$, such that either $M' = M$ or $\elem(M') > g(r(M'))$. 
	\end{lemma}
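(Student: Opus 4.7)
The plan is a descent argument. After replacing $M$ by its simplification, consider the collection $\cR$ of all restrictions $M''$ of $M$ with $r(M'') \ge m$ and either $M'' = M$ or $\elem(M'') > g(r(M''))$; by hypothesis $M \in \cR$. Choose $M' \in \cR$ of minimum rank; I will show $M'$ is weakly round, which proves the lemma.

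Suppose not. Then $M'$ has a cocircuit $C^*$ with $r_{M'}(C^*) \le r(M')-2$, and the complementary hyperplane $H = E(M')\setminus C^*$ has rank $r(M')-1$. The key dichotomy is that at least one of $\elem(M'|H) > g(r(M')-1)$ or $\elem(M'|C^*) > g(r(C^*))$ must hold: using simplicity and $r(C^*) \le r(M')-2$ together with $g(n) > qg(n-1)$, the failure of both would force
\[\elem(M') = \elem(M'|H) + \elem(M'|C^*) \le g(r(M')-1) + g(r(C^*)) \le g(r(M')-1)\bigl(1+\tfrac{1}{q}\bigr),\]
contradicting $\elem(M') \ge g(r(M')) > qg(r(M')-1)$ since $q \ge 2$. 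In either case the resulting smaller restriction ($M'|H$ of rank $r(M')-1$, or $M'|C^*$ of rank $r(C^*)$) has $\elem > g$ at its rank; if that rank is at least $m$, then it belongs to $\cR$, contradicting the minimality of $M'$.

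This closes the argument when $r(M') \ge m+2$, since then both $r(M')-1$ and (in the relevant subcase) $r(C^*)$ can be arranged to lie in $\cR$. The main obstacle is the boundary regime where $r(M') \in \{m, m+1\}$, in which the smaller restriction produced by the dichotomy may have rank strictly below $m$. I rule this out using property (c3) from Lemma~\ref{controlclass}: in the problematic subcase (where Case~i fails, so the density must concentrate in $C^*$) we obtain
\[\elem(M'|C^*) \ge \elem(M') - g(r(M')-1) > (q-1)g(r(M')-1) \ge (q-1)q^{r(M')-3},\]
using the bound $g(n) \ge q^{n-2}$ for $n \ge b$ (an easy consequence of the growth hypothesis). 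Combining this with the upper bound $\elem(M'|C^*) < q^{r(C^*)+c}$ from (c3) forces $r(M') \le r(C^*) + c + O(1)$, so the problematic boundary only arises when $r(M')$ is within a computable function $C = C(q,c)$ of the rank threshold. Thus by choosing $f_{\ref{roundness}}(q,c,b,m)$ sufficiently large (at least $m + b + C + O(1)$) and raising the rank threshold in $\cR$ from $m$ to $m + C$, the minimum-rank element lies safely above the problematic range and is therefore weakly round, while still satisfying $r(M') \ge m$ as required.
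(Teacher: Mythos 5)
Your strategy is the same \emph{kind} of descent as the paper's, and your dichotomy calculation (comparing $g(r-1)+g(r-2)$ to $g(r)$ via $g(n)>qg(n-1)$) is morally the same role the golden-ratio identity $\varphi^{-1}+\varphi^{-2}=1$ plays in the paper. But there is a genuine gap in how the floor is handled, and I don't think the proposed fix closes it.

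The problem is at the descent's boundary. You take $M'$ of \emph{minimum rank} in $\cR$, where $\cR$ requires rank at least some threshold $m'$ and absolute density $\elem > g$. If $M'$ is not weakly round, the dichotomy hands you either $M'|H$ (rank $r(M')-1$) or $M'|C^*$ (rank $\le r(M')-2$) with $\elem>g$. To get a contradiction from minimality, you need the denser side to land back in $\cR$, i.e.\ to have rank at least $m'$. You correctly observe, via (c3), that when Case~(i) fails one has $r(C^*) > r(M') - c - O(1)$, so Case~(ii) is only dangerous when $r(M')$ is within $O(c)$ of $m'$. But Case~(i) remains dangerous exactly when $r(M')=m'$: $M'|H$ then has rank $m'-1$, which is below the threshold, so no contradiction follows. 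Raising the threshold from $m$ to $m+C$ does not remove this --- the minimum of $\cR$ can perfectly well sit \emph{at} whatever new threshold you choose, and in that case your descent cannot step down. There is a second, smaller issue in the same vein: the dichotomy itself uses $g(r(C^*)) \le q^{-1}g(r(M')-1)$, which requires iterating $g(n)>qg(n-1)$ all the way down to $r(C^*)$; this only holds when $r(C^*) \ge b$, and below $b$ the hypotheses place no constraint on $g$ at all. So the floor is mandatory, and with a mandatory floor the boundary case genuinely occurs.

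The paper avoids this by running the descent against a \emph{relative} density $\elem(M') \ge \varphi^{r(M')-r(M)}\elem(M)$ over flats of $M$, with no rank floor in the descent itself. Because $\varphi^{-1}+\varphi^{-2}=1$, the two sides of a non-round split cannot both fail the relative bound, and minimality (inclusion-minimal flat meeting the bound) already gives a contradiction at every rank. Only \emph{afterwards} is the rank lower bound established: combining the relative density with (c3) gives $(q\varphi^{-1})^{r-r'} \le q^{c+1}$, hence $r-r' \le t$, a bound that is computable from $q$ and $c$ alone and is applied by making $f_{\ref{roundness}}$ at least $m+t$. In your scheme the absolute density $\elem(M') > g(r(M'))$ gives no comparable bound on how far $r(M')$ can drop, so there is no choice of threshold that makes the minimum land strictly above it. If you want to rescue your version, you would need to replace ``minimum rank subject to $\elem > g$'' by a criterion that automatically bounds how far the rank can fall --- at which point you are back to the relative-density criterion with a geometric weight, i.e.\ essentially the paper's proof.
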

	\begin{proof}
		Let $q$ be a prime power, and $b,m,c \ge 0$ be integers. Let  $\varphi = \tfrac{1}{2}(1 + \sqrt{5})$. Let $t$ be an integer so that $(q\varphi^{-1})^t > q^{c+1}$, and set $f_{\ref{roundness}}(q,c,b,m) = n = \max(b,m+t)$. 
		
		Let $\cM \in \fE_q$ satisfy $c > c_{\cM}$. Let $M \in \cM$ satisfy $r = r(M) \ge n$ and $\elem(M) \ge g(r)$. Let $M'$ be a minimal restriction of $M$ such that $E(M')$ is  a flat of $M$ and $\elem(M') \ge \varphi^{r(M')-r}\elem(M)$. Let $r' = r(M')$. If $M'$ is not weakly round, then there are flats $F_1$ and $F_2$ of $M'$ with $F_1 \cup F_2 = E(M')$ and $r(M'|F_i) \le r' - i$ for each $i \in \{1,2\}$; by minimality this gives $\elem(M') \le \elem(M'|F_1) + \elem(M'|F_2) < (\varphi^{-1} + \varphi^{-2})\varphi^{r'-r}\elem(M) \le \elem(M')$, a contradiction. So $M'$ is weakly round. 
		
		Note that, since $r \ge b$, we have $g(r) \ge q^{r-1}$, so 
		\[q^{r'+c} \ge \elem(M') \ge (q\varphi^{-1})^{r-r'}q^{r'-r}\elem(M) \ge (q\varphi^{-1})^{r-r'}q^{r'-1}.\] Therefore $q^{-1}(q\varphi^{-1})^{r-r'} \le q^c$, giving $r - r' \le t$ and so $r' \ge n-t \ge m$.
		
		If $r' = r$, then $M' = M$ since $E(M')$ is a flat of $M$; since $r(M) \ge n \ge m$ the lemma holds. If $r' < r$, then $\elem(M') \ge \varphi^{r'-r}g(r) = (q \varphi^{-1})^{r-r'}q^{r' - r}g(r) > g(r')$, since $r-r' > 0$ and $g(r) > q^{r-r'}g(r')$ by definition of $g$; again, the lemma holds.\end{proof}
		
	Weak roundness is clearly closed under contraction, and is thus easy to exploit to contract two restrictions of different ranks together:
		
	\begin{lemma}\label{menger}
		If $M$ is a weakly round matroid and $X$ and $Y$ are subsets of $E(M)$ such that $r_M(X) < r_M(Y)$, then $M$ has a minor $N$ so that $M|X = N|X, M|Y = N|Y$ and $Y$ is spanning in $N$. 
	\end{lemma}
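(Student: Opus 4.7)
The plan is to induct on the nonnegative integer $r(M)-r_M(Y)$, contracting one suitable element at a time until $Y$ becomes spanning.

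If $r_M(Y)=r(M)$, then $Y$ already spans $M$, and we may simply take $N=M$ (or delete any elements outside $X\cup Y$, which affects neither restriction).

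For the inductive step, assume $r_M(Y)<r(M)$. Since $r_M(X)<r_M(Y)\le r(M)-1$, we have $r_M(X)\le r(M)-2$. The first claim is that there exists
\[
e\in E(M)\setminus(\cl_M(X)\cup\cl_M(Y)).
\]
Indeed, if no such $e$ existed, then the flats $F_1=\cl_M(Y)$ and $F_2=\cl_M(X)$ would satisfy $F_1\cup F_2=E(M)$ with $r_M(F_1)\le r(M)-1$ and $r_M(F_2)\le r(M)-2$, exactly the configuration ruled out by weak roundness (cf.\ the second paragraph of the proof of Lemma~\ref{roundness}, which gives precisely this equivalent form of non-weak-roundness).

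Fix such an $e$. Since $e\notin\cl_M(X)$, the singleton $\{e\}$ is skew to every subset of $X$, so $(M\con e)|X=M|X$ and $r_{M\con e}(X)=r_M(X)$; the same argument applied to $Y$ gives $(M\con e)|Y=M|Y$ and $r_{M\con e}(Y)=r_M(Y)$. Moreover, $M\con e$ is weakly round: every cocircuit $C^*$ of $M\con e$ is a cocircuit of $M$ avoiding $e$, and
\[
r_{M\con e}(C^*)=r_M(C^*\cup\{e\})-1\ge r_M(C^*)-1\ge (r(M)-1)-1=r(M\con e)-1,
\]
using weak roundness of $M$. Finally, $r(M\con e)-r_{M\con e}(Y)=r(M)-1-r_M(Y)$, strictly smaller than the induction parameter, so the inductive hypothesis applied to $M\con e$ (with the same sets $X$ and $Y$) yields the desired minor $N$ of $M\con e$, which is a minor of $M$ with the required properties.

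There is no real obstacle here: the whole argument hinges on the observation that weak roundness is exactly the property that prevents $\cl_M(X)\cup\cl_M(Y)$ from covering $E(M)$ under the rank hypotheses we have, together with the routine fact that contracting a single non-loop preserves weak roundness and that contracting outside $\cl_M(X)\cup\cl_M(Y)$ preserves both restrictions.
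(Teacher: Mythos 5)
Your proof is correct and is essentially the paper's argument unfolded as an explicit induction: the paper instead takes a minor-minimal weakly round $N$ with $N|X=M|X$ and $N|Y=M|Y$, observes that minimality forces $E(N)=\cl_N(X)\cup\cl_N(Y)$ (since otherwise one could contract an element outside both closures, exactly your inductive step), and then derives the same contradiction from weak roundness. The two write-ups use the same key facts (contraction preserves weak roundness, and two low-rank flats cannot cover a weakly round matroid), so there is no substantive difference.
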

	\begin{proof}
		Let $N$ be a minimal minor of $M$ so that $M|X = N|X$, $M|Y = N|Y$ and $N$ is weakly round. By minimality we have $\elem(N) = \cl_N(X) \cup \cl_N(Y)$. If $Y$ is not spanning in $N$ then $r_N(Y) \le r(N)-1$ and $r_N(X) \le r(N)-2$; these facts together contradict weak roundness of $N$. 
	\end{proof}
	
	\subsection{Stacks}
	
	We will use Lemma~\ref{menger} to contract two `incompatible' restrictions together in a large matroid: an affine geometry over $\GF(q)$, and a stack. For each prime power $q$ and integers $k \ge 0$ and $t \ge 2$, a matroid $S$ is a \emph{$(q,k,t)$-stack} if there are disjoint sets $F_1, \dotsc, F_k \subseteq E(S)$ such that the union of the $F_i$ is spanning in $S$ and, for each $i \in \{1, \dotsc, k\}$ the matroid $(S \con (F_1 \cup \dotsc \cup F_{i-1}))|F_i$ has rank at most $t$ and is not $\GF(q)$-representable. 
	
	Note that a $(q,k,t)$-stack has rank at least $2k$ and at most $tk$. Stacks are far from being $\GF(q)$-representable; our first lemma, proved in ([\ref{dhjpaper}], Lemma 4.2), shows that a stack of height $\binom{k+1}{2}$ on top of a large projective geometry guarantees a rank-$k$ flat disjoint from the geometry. 
	
	\begin{lemma}\label{stackfindprojection}
		Let $q$ be a prime power and $h$ and $t$ be nonnegative integers. If $M$ is a matroid with a $\PG(r(M)-1,q)$-restriction $R$ and a $(q,\binom{k+1}{2},t)$-stack restriction $S$, then $E(S) - E(R)$ contains a rank-$k$ flat of $M$. 
	\end{lemma}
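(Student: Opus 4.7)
The plan is to proceed by induction on $k$. The base case $k=0$ is immediate, since the empty set is a rank-$0$ flat of $M$ contained in $E(S) - E(R)$.

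For the inductive step, I split the $\binom{k+1}{2} = \binom{k}{2} + k$ stack levels into two groups. The first $\binom{k}{2}$ levels $F_1, \dots, F_{\binom{k}{2}}$ themselves form a $(q, \binom{k}{2}, t)$-stack $S' := S|(F_1 \cup \cdots \cup F_{\binom{k}{2}})$, since any initial segment of a stack is itself a stack of smaller height on the corresponding restriction. Applying the induction hypothesis to $M$ with this substack $S'$ and the unchanged $\PG$-restriction $R$ produces a rank-$(k-1)$ flat $P$ of $M$ contained in $E(S') - E(R) \subseteq E(S) - E(R)$. I then use the remaining $k$ levels $F_j$ with $j \in \{\binom{k}{2}+1,\dots,\binom{k+1}{2}\}$ to extend $P$ to a rank-$k$ flat of $M$ in $E(S) - E(R)$.

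To perform the extension, I fix a basis $B \subseteq P$ and consider $M \con B$. The set $E(R)$ still spans $M \con B$, and $(M \con B)|E(R)$ is (up to simplification) an at most $(k-1)$-element projection of $\PG(r(M) - 1, q)$, obtained by choosing $B$ itself as the independent set witnessing the projection in the sense of the preliminaries. In particular, every element of $M \con B$ lies in the closure of $E(R)$, so the ``reference geometry'' against which the stack levels must be compared is a matroid in $\cP_q(k-1)$. For each remaining level $F_j$, the matroid $(S \con C_{j-1})|F_j = (M \con C_{j-1})|F_j$ is not $\GF(q)$-representable; I will argue that after further contracting $B$, this forces $F_j$ to contain an element $e_j$ whose closure-behavior in $M \con B$ differs from the $\GF(q)$-compatible parallelism-to-$E(R)$ inherited from $R$. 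Extracting such an $e_j$ from each of the $k$ remaining levels gives $k$ candidate extensions, and a pigeonhole argument using the rank-$(k-1)$ projective structure at $E(R)$ should show that at least one $e_j$ extends $P$ to a rank-$k$ flat of $M$ whose closure is disjoint from $E(R)$ and contained in $E(S)$.

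The main obstacle will be making this final counting rigorous. The intuition is that each ``bad'' candidate $e_j$---one for which $\cl_M(P \cup \{e_j\})$ meets $E(R)$ or escapes $E(S)$---witnesses $e_j$ being captured by a rank-$\le k-1$ substructure of the projective quotient of $R$ through $B$; there are structurally at most $k-1$ independent such failure modes, roughly one per rank of the projective quotient, so with $k$ candidates at least one must succeed. Carefully verifying that non-$\GF(q)$-representability of each stack level really does produce an element avoiding every failure mode, and tracking how contractions by $B$ and by lower stack levels interact with the PG-structure of $R$ using modularity of flats in $\PG(r(M)-1, q)$, is the technical heart of the argument.
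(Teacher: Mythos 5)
This lemma is not proved in the present paper; it is quoted from [\ref{dhjpaper}], Lemma~4.2, so there is no internal argument to compare against. Judged on its own terms, your proposal's base case, the decomposition $\binom{k+1}{2} = \binom{k}{2}+k$, the observation that the first $\binom{k}{2}$ levels form a $(q,\binom{k}{2},t)$-stack, and the application of the induction hypothesis to obtain a rank-$(k-1)$ flat $P$ of $M$ inside $E(S')-E(R)$ are all fine. The inductive step, however, which you yourself flag as unfinished, contains a real gap rather than just a missing verification.

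The plan asks, for each remaining level $F_j$ with $j>\binom{k}{2}$, to pass from the stack condition to information in $M/B$, where $B$ is a basis of $P$. But the stack condition says that $(S/(F_1\cup\dots\cup F_{j-1}))|F_j$ is not $\GF(q)$-representable, and $F_1\cup\dots\cup F_{j-1}$ has rank potentially as large as $t(j-1)$, while $B$ has rank only $k-1$. The latter matroid is a proper quotient (projection) of $(M/B)|F_j$ obtained by contracting elements outside $F_j$; non-$\GF(q)$-representability of a quotient gives you nothing about the representability of $(M/B)|F_j$, so there is no reason $(M/B)|F_j$ must contain an element with the "non-$\GF(q)$-compatible closure behavior" you want. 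The two quotients $M/B$ and $M/(F_1\cup\dots\cup F_{j-1})$ live in genuinely different worlds (in the latter, $R$ is an arbitrarily deep projection of $\PG(r-1,q)$, not something in $\cP_q(k-1)$), and transferring the stack information from one to the other is exactly the hard part of this lemma; it cannot be assumed. Additionally, the pigeonhole bound "at most $k-1$ independent failure modes, so $k$ candidates suffice" is never made precise, and in particular you have not shown why a suitably chosen $e_j$ produces a flat $\cl_M(P\cup\{e_j\})$ that stays inside $E(S)$ (closure in $M$ can pick up elements of $E(M)-E(S)$). Until these are filled in with an actual argument, the inductive step is an intuition rather than a proof.
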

	
	Such a flat gives rise to a $k$-element projection of $R$. Since matroids in a given class $\cM \in \bE_q$ do not contain these projections for arbitrarily large $k$, the above implies that a large stack and a large affine geometry restriction cannot coexist in a weakly round matroid in $\cM$:
	
	\begin{lemma}\label{stackwin}
	 Let $\cM \in \bE_q$, let $c = c_{\cM}$, and let $s \ge 2$ be an integer. Then $\cM$ contains no weakly round matroid with an $\AG(c^2s,q)$-restriction and a $(q,c^2,s)$-stack restriction.
	\end{lemma}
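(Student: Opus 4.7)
Suppose for contradiction that $M\in\cM$ is a weakly round matroid with an $\AG(c^2s,q)$-restriction $A$ and a $(q,c^2,s)$-stack restriction $S$. The plan is to produce a matroid in $\cM\cap\cP_q(c)$, contradicting Lemma~\ref{controlclass}(\ref{c5}).

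\textbf{Step 1 (Menger reduction).} Since $r_M(E(S))\le c^2s<c^2s+1=r_M(E(A))$, I apply Lemma~\ref{menger} with $X=E(S)$ and $Y=E(A)$ to obtain a minor $N$ of $M$ with $N|E(A)=A$, $N|E(S)=S$, and $E(A)$ spanning in $N$. Then $N\in\cM$ and $r(N)=c^2s+1$.

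\textbf{Step 2 (Finding a flat disjoint from $E(A)$).} Since $c^2\ge\binom{c+1}{2}$, the restriction $S$ is also a $(q,\binom{c+1}{2},s)$-stack. The goal is a rank-$c$ flat $F$ of $N$ with $F\cap E(A)=\varnothing$. Lemma~\ref{stackfindprojection} as stated requires a $\PG$-restriction spanning $N$; one can use instead the analogous argument in which the $\GF(q)$-representable affine geometry $A$ plays the role of the $\PG$. The argument of [\ref{dhjpaper},~Lemma 4.2] uses only that no level of the stack can be contained in a $\GF(q)$-representable restriction, so it goes through verbatim with $A$ in place of $R$. This yields the desired rank-$c$ flat $F\subseteq E(S)\setminus E(A)$.

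\textbf{Step 3 (Building a $\cP_q(c)$-minor).} Set $N_1=N|(E(A)\cup F)$, so that $N_1$ has $A$ as a spanning restriction and $F$ as a rank-$c$ flat disjoint from $E(A)$. The natural affine-to-projective completion of $A$ is the unique extension $A^+\cong\PG(c^2s,q)$ obtained by adjoining a hyperplane $H$ disjoint from $E(N_1)$. I claim there exists an extension $\wh N_1$ of $N_1$ on ground set $E(A)\cup F\cup H$ such that $\wh N_1|E(N_1)=N_1$ and $\wh N_1|(E(A)\cup H)\cong A^+$; this is constructed by adjoining the elements of $H$ to $N_1$ one at a time, with each new point's position dictated by the linear structure of $A^+$ via the appropriate modular cut. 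Let $K\subseteq F$ be a basis of $N_1|F$ and set $\wh M=\wh N_1|(E(A)\cup H\cup K)$. Then $\wh M\setminus K=A^+\cong\PG(c^2s,q)$ and $K$ is a $c$-element independent set of $\wh M$, so $\wh M/K$ is a $c$-element projection of $\PG(c^2s,q)$, and hence $\si(\wh M/K)\in\cP_q(c)$. Because $E(A)$ spans $\wh N_1$, every parallel class of $\wh M/K$ meets $E(A)$, and the same argument shows every parallel class of $N\con F$ meets $E(A)$; the two restrictions to $E(A)$ agree (they have identical rank functions since $\wh N_1$ and $N$ agree on subsets of $E(A)\cup F$), so $\si(N\con F)\cong\si(\wh M/K)\in\cP_q(c)$.

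\textbf{Step 4 (Contradiction).} Since $\cM$ is minor-closed, $\si(N\con F)\in\cM$, and combining with Step 3 gives $\si(N\con F)\in\cM\cap\cP_q(c)$, contradicting Lemma~\ref{controlclass}(\ref{c5}).

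\textbf{Main obstacle.} The technical heart of the argument is the construction of $\wh N_1$ in Step 3. The set $E(A)$ is not generally modular in $N_1$ (the flat $F$ witnesses this), so the modular sum of $N_1$ and $A^+$ along $E(A)$ does not directly furnish the extension. One must instead construct $\wh N_1$ by successive single-element extensions, at each step verifying that the modular cut read off from the $\GF(q)$-linear structure of $A^+$ remains a valid modular cut of the partial extension. The verification hinges on compatibility between $A^+$'s modular flats and the flats of $N_1$ contained in $E(A)$, and is the step most likely to require care.
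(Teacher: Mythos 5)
Your high-level plan (Menger, find a rank-$c$ flat disjoint from the good restriction, contract it to land in $\cM\cap\cP_q(c)$) is the right one, and Step 1 is exactly what the paper does. But you have a genuine gap, and there is a one-line device you missed that removes it.

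The problem is that you try to work with the affine geometry $A$ directly, which forces you into two unsupported claims. In Step 2 you apply Lemma~\ref{stackfindprojection} "with $A$ in place of $R$" on the grounds that the proof of [\ref{dhjpaper}, Lemma~4.2] "goes through verbatim"; you have not checked this, and the proof there leans on modularity of flats in a projective geometry to peel levels off the stack, which is not obviously available for an affine geometry. In Step 3 you need an extension $\wh N_1$ of $N_1$ in which $E(A)\cup H$ carries the projective completion $A^+$; as you yourself note, $E(A)$ is not modular in $N_1$, so the modular sum is unavailable, and there is no general reason the successive single-element extensions you describe should exist (the modular cuts dictated by $A^+$ need not be modular cuts of the partial extensions). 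These are not merely "steps requiring care" — they are open holes.

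The paper avoids both at once with one contraction. After the Menger step, $N$ has $S$ as a restriction and $A$ as a \emph{spanning} restriction with $r(A)=r(S)+1$; so there is a point $e\in E(A)-\cl_N(E(S))$. Contracting $e$ leaves $S$ untouched as a restriction (since $e$ is skew to $E(S)$), while $\si(A\con e)\cong\PG(r(A)-2,q)$ is a \emph{spanning projective geometry} restriction $R'$ of $N\con e$. Now Lemma~\ref{stackfindprojection} applies exactly as stated in $N\con e$, giving a rank-$c$ flat $F\subseteq E(S)\setminus E(R')$, and $(N\con e\con F)|E(R')$ is a loopless $c$-element projection of $\PG(c^2s-1,q)$ of rank $\ge 2$, i.e.\ a matroid in $\cM\cap\cP_q(c)$, contradicting Lemma~\ref{controlclass}(\ref{c5}). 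No projective completion of $N_1$ is needed, and the stack-finding lemma is used verbatim rather than implicitly re-proved.
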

	\begin{proof} Let $c = c_{\cM}$ and let $M \in \cM$ be weakly round with an $\AG(c^2s,q)$-restriction $R$ and a $(q,c^2,s)$-stack restriction $S$. Since $r(S) \le c^2 s = r(R)-1$, Lemma~\ref{menger} gives a minor $N$ of $M$ with $S$ as a restriction and $R$ as a spanning restriction. Since $r(S) < r(R)$ there is some $e \in E(R) - \cl_N(E(S))$; now $N \con e$ has $S$ as a restriction and $(N \con e)|(E(R \con e))$ has a $\PG(r(N \con e)-1,q)$-restriction $R'$. Since $c^2 \ge \binom{c+1}{2}$, the matroid $S$ has a $(q,\binom{c+1}{2},s)$-stack restriction, so there is a rank-$c$ flat $F$ of $M$ disjoint from $E(R')$ by Lemma~\ref{stackfindprojection}; now $r(R') > c+1$; it follows that $(M \con F)|E(R') \in \cM \cap \cP_q(c)$, contradicting Lemma~\ref{controlclass}.
	\end{proof}
		
	The affine geometries in the lemma above will be obtained from Theorem~\ref{dhj}. The following is a more convenient version of the theorem that applies within a particular class in $\bE_q$; the equivalence, with $f_{\ref{mccdhj}}(q,c,m,\beta) = f_{\ref{dhj}}(q,c,\max(m,c+1),\beta)$, is easy to see.
	
	\begin{theorem}\label{mccdhj}
		There is a computable function $f_{\ref{mccdhj}}\colon \bZ^3 \times \bQ \to \bZ$ so that, for every prime power $q$, all integers $c,m \ge 2$ and all $\beta \in \bQ_{>0}$, if $\cM \in \fE_q$ satisfies $c_{M} \le c$, and $M \in \cM$ satisfies $r(M) \ge f_{\ref{mccdhj}}(q,c,m,\beta)$ and $\elem(M) \ge \beta q^{r(M)}$, then $M$ has an $\AG(m-1,q)$-restriction. 
	\end{theorem}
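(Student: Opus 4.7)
\smallskip

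The plan is to deduce this essentially as an immediate corollary of Theorem~\ref{dhj}, using the parameter $c$ to rule out the projective geometry alternative. The crucial observation is that the conclusion of Theorem~\ref{dhj} allows for either an $\AG(m-1,q)$-restriction or a $\PG(m-1,q')$-minor for some $q'>q$; in the setting of a class $\cM \in \fE_q$ with $c_\cM \le c$, the latter alternative is inconsistent with minor-closure as soon as $m-1 \ge c-1$.

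Concretely, I would set $m' = \max(m,c+1)$ and define
\[ f_{\ref{mccdhj}}(q,c,m,\beta) = f_{\ref{dhj}}(q,c,m',\beta). \]
Given $\cM \in \fE_q$ with $c_\cM \le c$ and $M \in \cM$ satisfying the rank and density hypotheses, apply Lemma~\ref{controlclass}(\ref{c1}) to conclude that $M$ has no $U_{2,c+2}$-minor. Then Theorem~\ref{dhj} (with parameters $\ell=c$ and $m'$) yields that $M$ has either an $\AG(m'-1,q)$-restriction or a $\PG(m'-1,q')$-minor for some $q'>q$.

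In the second case, since $m' \ge c+1 > c$, the $\PG(m'-1,q')$-minor contains $\PG(c-1,q')$ as a restriction of a flat, and so $\PG(c-1,q') \in \cM$ because $\cM$ is minor-closed. This directly contradicts Lemma~\ref{controlclass}(\ref{c2}). Therefore the first case holds, and since $m'-1 \ge m-1$, the $\AG(m'-1,q)$-restriction contains an $\AG(m-1,q)$-restriction, as required. Computability of $f_{\ref{mccdhj}}$ follows from computability of $f_{\ref{dhj}}$.

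There is essentially no obstacle here: the lemma is just a packaging of Theorem~\ref{dhj} that trades the second disjunctive conclusion for the standing assumption $\cM \in \fE_q$, and the proof is a two-line bookkeeping argument.
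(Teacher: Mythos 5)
Your proof is correct and is exactly the argument the paper has in mind: the paper states the equivalence in one sentence, giving the same function $f_{\ref{mccdhj}}(q,c,m,\beta) = f_{\ref{dhj}}(q,c,\max(m,c+1),\beta)$ and leaving the bookkeeping to the reader. You have just spelled out that bookkeeping, correctly invoking Lemma~\ref{controlclass} parts (\ref{c1}) and (\ref{c2}) to supply the $U_{2,c+2}$-freeness hypothesis for Theorem~\ref{dhj} and to eliminate the $\PG(m'-1,q')$-minor alternative.
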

		 		
	\section{Minimality}
	
	The results in the previous section imply that a dense matroid in a class $\cM \in \bE_q$ has a dense weakly round restriction that itself has a large affine geometry restriction. The following lemma, roughly, shows that a matroid which is minor-minimal with respect to being dense and having this geometry as a restriction, has a spanning projective geometry after contracting just a few elements.  
	
	\begin{lemma}\label{getnearspanningpg}
		There is a computable function $f_{\ref{getnearspanningpg}}\colon \bZ^2 \to \bZ$ so that for every prime power $q$ and integer $c \ge 2$, if $\cM \in \fM_q$ satisfies $c_{\cM} \le c$, and $M \in \cM$ is weakly round, satisfies $\elem(M) \ge q^{r(M)-1}$, and has an $\AG(f_{\ref{getnearspanningpg}}(q,c)-1,q)$-restriction $R$ such that $\elem(M) > q \elem(M \con e)$ for all $e \in E(M) - \cl_M(E(R))$, then there is a set $C \subseteq E(M)$ such that $M \con C$ has a $\PG(r(M \con C)-1,q)$-restriction and $r_M(C) \le f_{\ref{getnearspanningpg}}(q,c).$ 
	\end{lemma}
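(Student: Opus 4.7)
The plan is to choose $f = f_{\ref{getnearspanningpg}}(q,c)$ to be a sufficiently large computable function of $q$ and $c$, with $f - 1 \ge 2c^2$ so that $R \cong \AG(f-1,q)$ contains an $\AG(2c^2,q)$-restriction (needed to apply Lemma~\ref{stackwin}). Write $F = \cl_M(E(R))$; it is a flat of $M$ of rank $f$.

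First observe the easy half: for any $a \in E(R)$, the simplification $\si(R \con a)$ is isomorphic to $\PG(f-2,q)$, so $M \con a$ has a $\PG(f-2,q)$-restriction $R^{\star}$ of rank $f-1$. The plan is to enlarge $\{a\}$ to a set $C = \{a\} \cup B$, where $B \subseteq E(M) \setminus F$ is chosen skew to $E(R)$ in $M$ and has $r_M(B) = r(M) - f$; then $R^{\star}$ remains a restriction of $M \con C$ of rank $f-1$ and $r(M \con C) = f-1$, so $R^{\star}$ is spanning in $M \con C$. Existence of $B$ follows from weak roundness applied to the cover $E(M) = F \cup (E(M) \setminus F)$, which forces either $r(M) \le f+1$ (trivial case) or $r_M(E(M) \setminus F) \ge r(M) - 1$. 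Setting $C = \{a\} \cup B$ yields $r_M(C) = 1 + (r(M)-f)$, so the conclusion $r_M(C) \le f$ is exactly the inequality $r(M) \le 2f - 1$.

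The technical heart is therefore to prove the rank bound $r(M) \le 2f-1$, which I do by contradiction. Suppose $r(M) > 2f - 1$, so by weak roundness $r_M(E(M) \setminus F) \ge r(M) - 1 \ge 2c^2$. For each $e \in E(M) \setminus F$, expanding $\elem(M) - 1 = \sum_{L \ni e}(|L| - 1)$ in the hypothesis $\elem(M) > q\,\elem(M \con e)$ gives $\sum_{L \ni e}(|L| - q - 1) \ge 0$, so at least one line $L$ through $e$ satisfies $|L| \ge q+1$; since such a line meets $F$ in at most one point, it contributes at least $q$ elements to $E(M) \setminus F$. Combining this abundance of near-long lines with a Ramsey-flats argument in the style of Lemma~\ref{ramseyflats} and the class bounds from Lemma~\ref{controlclass}, I extract $c^2$ skew rank-$2$ non-$\GF(q)$-representable restrictions, i.e., a $(q, c^2, 2)$-stack; then Lemma~\ref{stackwin}, applied together with the $\AG(2c^2, q)$-restriction contained in $R$, contradicts $M \in \cM$.

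The principal obstacle is the stack-extraction step: the minimality hypothesis only guarantees that some line through each $e \notin F$ has length $\ge q+1$, which is weaker than outright non-$\GF(q)$-representability (length $\ge q+2$). The dichotomy must be resolved by treating in parallel the subcase where genuinely long lines exist (and supply the stack layers directly) and the locally $\GF(q)$-representable subcase where every such line has length exactly $q+1$; in the latter the modularity supplied by those $\GF(q)$-lines combines with a recognition step in the spirit of Lemma~\ref{recognisepqk} to extend $R$ inside $M$ to a projective-geometry restriction of rank exceeding $c$, contradicting Lemma~\ref{controlclass}(\ref{c5}).
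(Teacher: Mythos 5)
Your proof rests on establishing the bound $r(M) \le 2f-1$ where $f = f_{\ref{getnearspanningpg}}(q,c)$, and that bound is false: the lemma must apply to matroids $M$ of arbitrarily large rank relative to $f$. Take $M \cong \PG(n-1,q)$ with $n$ far larger than $f$, let $R$ be any $\AG(f-1,q)$-restriction of $M$, and observe that for every $e \in E(M)$ we have $\elem(M) = \tfrac{q^n-1}{q-1} = q\tfrac{q^{n-1}-1}{q-1} + 1 > q\,\elem(M \con e)$; so $M$ is weakly round, $\elem(M) \ge q^{r(M)-1}$, and the density-drop hypothesis holds, while $r(M) = n > 2f-1$. (For this $M$ the conclusion is trivially true with $C = \varnothing$, but the point is that the rank bound you are trying to derive simply does not follow from the hypotheses.) Indeed the lemma is later invoked, in Lemma~\ref{maintechcompute}, on minors $M_2$ whose rank is much larger than $f_{\ref{getnearspanningpg}}(q,c)$, so the bounded-rank case cannot carry the load. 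A second, smaller gap: your weak-roundness argument for a set $B \subseteq E(M)\setminus \cl_M(E(R))$ skew to $E(R)$ of rank $r(M)-f$ does not work either; weak roundness only gives $r_M(E(M)\setminus F) \ge r(M)-1$, which does not produce a subset skew to $F$.

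The paper does dispose of the case $r(M) < f + n_0$ with a contraction argument like the one you sketch, but the substance of the proof is the high-rank case, and there the argument looks nothing like a rank bound. The density hypothesis is used to bound the number of \emph{short} lines (at most $q$ points) through each generic element, while the number of \emph{long} lines (at least $q+2$ points) is controlled separately via a maximal skew collection of long lines and Lemma~\ref{stackwin}. After contracting a stack $S$, a bounded-rank set $J$ (which absorbs the short-line irregularities), and one more element of $R$, the resulting minor has lines of length exactly $q+1$ through a large set $Z$, and only then is the recognition step (Lemma~\ref{recognition}) applicable. Your suggested repair, splitting into a genuinely-long-line case and a locally-$(q{+}1)$ case, glosses over precisely this work: a single line of $q+1$ points through each $e \notin F$ yields neither a non-$\GF(q)$ stack layer nor the modularity your recognition step needs, and the hard part of the proof is arranging for \emph{all} lines near a large region to have exactly $q+1$ points before any recognition can begin.
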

	\begin{proof}
		Let $q$ be a prime power and $c \ge 2$ be an integer. Let $s = q^{q^{5c^2}}+4$. Let 
		\[\beta = q^{-1}, \ \ \beta_1 = \tfrac{1}{2}\beta q^{-c^2s-c}, \ \ \beta_2 =  \tfrac{1}{2}\beta_1 q^{-2q^{5c^2}}. \] 
		Let $n_0$ and $t$ be integers so that $\beta_1 q^{n_0} \ge q^{c^2s+c}$ and $\beta_1q^{n_0-c^2s-q^{5c^2}} \ge 2$, and $\beta_2 > q^{c-t}$. Finally, set 
		\[f_{\ref{getnearspanningpg}}(q,c) = \max(n_0,c^2s+1,20(c+2) +  c^2s + q^{4c^2}+5).\]

		Let $\cM \in \fE_q$ satisfy $c_{\cM} \le c$, let $n \ge f_{\ref{getnearspanningpg}}(q,c)$ be an integer, and let $M \in \cM$ be weakly round such that $\elem(M) \ge q^{r(M)-1}$ and $M$ has an $\AG(n-1,q)$-restriction $R$ so that $\elem(M) > q \elem(M \con e)$ for all $e \in E(M) - \cl_M(E(R))$. We may assume that $M$ is simple. If $r(M) < n + n_0$ then we can contract at most $n_0-1$ elements so that $R$ is a spanning restriction and contract a further element of $R$ to obtain a spanning projective geometry; since $n_0  \le f_{\ref{getnearspanningpg}}(q,c)$ the lemma is satisfied. We may therefore assume that $r(M) \ge n + n_0$.
		
		We say a line of a matroid is \emph{long} if it contains at least $q+2$ points, and \emph{short} if it contains at most $q$ points. Let $\cL_0$ be a maximal skew collection of long lines of $M$ and let $F = \cl_M(\cup \cL_0)$. Note that $M|F$ is a $(|\cL_0|,q,2)$-stack and is therefore also a $(|\cL_0|,q,s)$-stack; by Lemma~\ref{stackwin}, $M$ has no $(c^2,q,s)$-stack restriction, so $|\cL_0| < c^2$ and $r_M(F) = 2|\cL_0| < 2c^2$. By maximality of $\cL_0$, no long line of $M$ is skew to $F$. For each $e \in E(M) - F$ it follows that the number of long lines of $M$ through $e$ does not exceed $\elem((M \con e)|\cl_{M \con e}(F)) < q^{2c^2+c} \le q^{3c^2}$. We claim that every point not spanned by $F$ or $E(R)$ is on few short lines:
		
		\begin{claim}
			Each $e \in E(M) - (F \cup \cl_M(E(R)))$ is in fewer than $q^{4c^2}$ short lines of $M$. 
		\end{claim}
		\begin{proof}[Proof of claim:]
		For each such $e$, let $\cL^-$ and $\cL^+$ respectively denote the sets of short and long lines of $M$ through $e$, and let $\cL^=$ be the set of all other lines through $e$.  Note that $|\cL^+| \le q^{3c^2}$ and $\elem(M \con e) = |\cL^+| + |\cL^-| + |\cL^{=}|$. Now 
		\begin{align*}
			\elem(M) &= \sum_{L \in \cL^-}(|L|-1) + q|\cL^=| + \sum_{L \in \cL^+}(|L|-1)\\
			&\le (q-1)|\cL^-| + q|\cL^=| + (c+1)|\cL^+|\\
			&= q\elem(M \con e) + (c+1-q)|\cL^+| - |\cL^-|
		\end{align*}
		Now $\elem(M) > q\elem(M \con e)$, so $|\cL^-| < (c+1-q)|\cL^+| < cq^{3c^2} < q^{4c^2}$. 
		\end{proof}
		Recall that $M|F$ is a $(|\cL_0|,q,s)$-stack. Let $S$ be an $(j,q,s)$-stack restriction of $M$ containing $F$ for which $j$ is as large as possible. By Lemma~\ref{stackwin} we have $j <c^2$, so $r_M(S) < c^2s$. Let $M_1 = M \con E(S)$, noting that $r(M_1) > r(M)-c^2s$; by maximality of $j$ we know that every rank-$s$ restriction of $M_1$ is $\GF(q)$-representable. We also have $\elem(M_1) \ge q^{-c^2s-c}\elem(M) \ge q^{-c^2s-c}\beta q^{r(M)} \ge 2\beta_1q^{r(M_1)}$.
		
		Let $X = \cl_{M_1}(E(M_1) \cap E(R))$; note that $\elem(M_1|X) \le q^{n+c} \le \beta_1 q^{n+n_0-c^2s} \le \beta_1 q^{r(M)-c^2s} \le \beta_1q^{r(M_1)}$, so $\elem(M_1 \del X) \ge \beta_1 q^{r(M_1)}$. Moreover, every nonloop of $M_1 \del X$ is in fewer than $q^{4c^2}$ $q$-short lines of $M$ and is therefore in fewer than $q^{4c^2}$ short lines of $M_1$.
				
		\begin{claim}
			There are disjoint sets $J,Z \subseteq E(M_1)$, satisfying  $r_{M_1}(J) \le q^{4c^2}$ and $\elem((M_1 \con J)|Z) \ge \beta_2 q^{r(M_1 \con J)}$, so that every short line of $M_1 \con J$ is disjoint to $Z$. 
		\end{claim}
		\begin{proof}[Proof of claim:]
			For each nonloop $e$ of $M_1$, let $J_e$ be the closure in $M_1$ of the union of the short lines of $M_1$ containing $e$. If $e \notin X$ then there are fewer than $q^{4c^2}$ such lines, so $r(M_1|J_e) \le q^{4c^2}$ and $\elem(M_1|J_e) < q^{q^{4c^2}+c} < q^{q^{5c^2}}$. Recall that $\elem(M_1\del X) \ge \beta q^{r(M_1)}$; let $Z'$ be a set of nonloops of $M_1\del X$ satisfying $|Z'| = \lfloor \beta_1 q^{-q^{5c^2}} q^{r(M_1)}\rfloor$. We have $\sum_{z \in Z'}\elem(M_1|J_z) < \beta_1 q^{r(M_1)} \le \elem(M_1\del X)$; let $y$ be a nonloop of $M_1\del X$ such that $y \notin J_z$ for all $z \in Z$.  
			
		Let $J = J_y$. We now argue that no nonloop of $(M_1 \con J)|(Z'-J)$ is in a short line. Suppose otherwise; let $\cl_{M_1 \con J}(\{e,f\})$ be a short line of $M_1 \con J$ with $e \in Z'$. The line $\cl_{M_1}(\{e,f\})$ is also short in $M_1$. Since $f \notin J$ we know that the line $\cl_{M_1}(\{y,f\})$ is not short, so there is some triangle $\{y,f',f\}$ of $M$. If $\cl_{M_1}(\{e,f'\})$ is short then $\cl_{M_1}(\{e,f\})$ and $\cl_{M_1}(\{e,f'\})$ are two short lines of $M_1$ whose union spans $y$; this contradicts $y \notin J_e$. Therefore $\cl_{M_1}(\{e,f'\})$ is not short in $M_1$. But $y \in J$ so $f'$ and $f$ are parallel in $M_1 \con Y$; it follows that $\cl_{M_1}(\{e,f\})$ is not short in $M_1 \con J$, contradicting its definition. 
		
		Therefore no short line of $M_1 \con J$ contains a nonloop of $(M_1 \con J)|Z'$. However, using $r(M_1|J) \le q^{{4c^2}}$ and Lemma~\ref{controlclass}, we have
		\begin{align*}
		\elem((M_1 \con J)|(Z'-J)) &\ge q^{-q^{4c^2}-c}\elem(M_1|Z') \\
							     &\ge q^{-q^{5c^2}} \lfloor \beta_1 q^{-q^{5c^2}} q^{r(M_1)}\rfloor\\
							     &\ge \tfrac{1}{2}\beta_1 q^{-2q^{5c^2}}q^{r(M_1 \con J)},
\end{align*}
	 where we use $r(M_1) \ge r(M_1 \con J)$ and the fact that $\beta_1q^{-q^{5c^2}}q^{r(M_1)} \ge \beta_1q^{n_0-c^2s-q^{5c^2}} \ge 2$. Thus, $J$ and $Z = Z' - J$ satisfy the claim.
		\end{proof}

			
		Let $M_2 = M_1 \con (J \cup \{f\})$ for some $f \in E(R)$. Since every rank-$s$ restriction of $M_1$ is $\GF(q)$-representable and $r_{M_1}(J) \le s-4$, every rank-$3$ restriction of $M_2$ is $\GF(q)$-representable; in particular, $M_2$ has no long lines. We now build a nearly spanning projective geometry restriction in $M_2$ using $Z$.
		
		\begin{claim}
			$M_2$ has a $\PG(r(M_2)-t-1,q)$-restriction. 
		\end{claim}
		\begin{proof}
			Note that $(M \con f)|E(R)$ has a projective geometry restriction of rank at least $20(q+2) + c^2s + q^{4c^2}+2$. Since $M_2$ is obtained from $M \con f$ by contracting a set of rank at most $c^2s + q^{4c^2}$, we see that $M_2$ has a $\PG(20(q+2)+2,q)$-restriction. Let $m$ be maximal so that $M_2$ has a $\PG(m-1,q)$-restriction $G$, so $m \ge 20(q+2)+2$; assume that $m < r(M_2) - t$. We have $\elem(M_2 | \cl_{M_2}(G)) \le q^{r(G)+c} < q^{c-t} q^{r(M_2)} \le \beta_2 q^{r(M_2)}$, so there is some  $e \in Z - \cl_{M_1}(E(G))$. Let $G' = M|(\cup_{x \in E(G)} \cl_{M_2}(\{e,x\}))$.
			
			We will now apply Lemma~\ref{recognition} to $G'$. We have $\si(G' \con e) \cong G \in \cP_q(0) = \cP_q(0,10(q+2)+1)$, and $r(G') = m+1 \ge 20(q+2)+3$. By choice of $e$, every line of $G'$ through $e$ contains exactly $q+1$ elements, and if there is a line of $G'$ through $e$ that is not modular to some other line of $G'$, then we can contract a point of the latter line to find a $U_{2,q+2}$-restriction; this contradicts the fact that every rank-$3$ restriction of $M_2$ is $\GF(q)$-representable. Therefore $(L,L')$ is a modular pair for every pair of lines of $G'$ with $e \in L$. Lemma~\ref{recognition} applied with $\ell = q+2$ and $t = 1$ now gives $G'\del D \in \cP_{q}(k)$ for some $k \ge 0$ and $D \subseteq \cl_{G'}(\{e\})$. Since matroids in $\cP_q(k)$ have no coloop, it follows that $G'$ has a spanning restriction in $\cP_q(k)$, and so has a $\PG(r(G')-1,q)$-restriction. This contradicts the maximality of $m$. 
		\end{proof}
		Let $G$ be such a restriction and $B$ be a basis of $M_2$ containing a basis $B_G$ for $G$. Let $C = E(S) \cup J \cup (B-B_G)$. We have $M \con C = M_2 \con (B-B_G)$, so $G$ is a $\PG(r(M \con C)-1,q)$-restriction of $M \con C$. Moreover 
		\[r_M(C) \le r_M(S) + r_{M_1}(J) + |B_G| \le c^2s + (s-3) + t \le f_{\ref{getnearspanningpg}}(q,c),\] as required. 
	\end{proof}
	
\section{Finding a Projection}

	In this section, we show that a very high-rank matroid $M_0$ that is a few contractions away from being a very highly locally representable matroid in $\cP_q(j)$ for some $j$ is either in $\cP_q(k)$ for some $k$, or contains a high-rank minor in $\cP_q(k)$ that is, in a certain sense, denser than $M_0$. 
	
	\begin{lemma}\label{finddenseprojection}
		There is a computable function $f_{\ref{finddenseprojection}}\colon \bZ^5 \to \bZ$ so that, for every prime power $q$ and all integers $c,t,j,m \ge 0$, if $\cM \in \fE_q$ satisfies $c_{\cM} \le c$, and $M_0 \in \cM$ satisfies $r(M_0) \ge f_{\ref{finddenseprojection}}(q,c,t,j,m)$ and $M_0 \con K \in \cP_q(j,f_{\ref{finddenseprojection}}(q,c,t,j,m))$ for some rank-$t$ set $K$ of $M_0$, then there is an integer $k \ge 0$ and a minor $M$ of $M_0$ of rank at least $m$, so that $M \in \cP_q(k)$ and either $M = M_0$, or \[\tfrac{q^{r(M)+k}-1}{q-1} - \elem(M)  < \tfrac{q^{r(M_0)+k}-1}{q-1} - \elem(M_0). \]
	\end{lemma}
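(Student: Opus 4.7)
The plan is to apply the recognition lemma (Lemma~\ref{recognition}) to the pair $(M_0, K)$ and induct on $t$: either we directly obtain $M_0 \in \cP_q(k)$, or we contract a single element inside the rank-$t$ flat $\cl_{M_0}(K)$ and apply the inductive hypothesis to the resulting strict minor.

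Setting $\ell = c + 2$ (so $U_{2,\ell} \notin \cM$) and $s = 10\ell + t$, we define $f_{\ref{finddenseprojection}}$ recursively: $f_{\ref{finddenseprojection}}(q,c,0,j,m) = m$, and for $t \ge 1$, $f_{\ref{finddenseprojection}}(q,c,t,j,m)$ is chosen large enough that $r(M_0) \ge 2s + t + m$, $M_0 \con K \in \cP_q(j, s)$ (so that Lemma~\ref{recognition} applies), and $s \ge f_{\ref{finddenseprojection}}(q,c,t-1,j,m)$ (so that the recursive call is legal). The base case $t = 0$ is immediate: $K$ is empty, so $M_0 \in \cP_q(j)$ and we take $M = M_0$, $k = j$.

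For the inductive step, invoke Lemma~\ref{recognition}. If the modularity hypothesis holds and the resulting $D \subseteq \cl_{M_0}(K)$ satisfies $\si(M_0) = \si(M_0 \del D)$, then $M_0 \in \cP_q(k)$ and we take $M = M_0$ with this $k$. Otherwise (either modularity fails, or modularity holds but $D \cap \si(M_0) \ne \varnothing$), there is an element $e$ lying inside a small flat close to $K$: either an ``extra'' point $e \in D \cap \si(M_0) \subseteq \cl_{M_0}(K)$, or (in the non-modular case) a point on a witnessing line inside $\cl_{M_0}(K)$. In either case $e \in \cl_{M_0}(K)$, so $K$ has rank $t-1$ in $M_0 \con e$ and $(M_0 \con e) \con K = M_0 \con K \in \cP_q(j, s) \subseteq \cP_q(j, f_{\ref{finddenseprojection}}(q,c,t-1,j,m))$. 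Apply the inductive hypothesis to $M_0 \con e$ with the rank-$(t{-}1)$ set $K$, obtaining a minor $M$ of $M_0 \con e$ (hence of $M_0$) of rank at least $m$, belonging to $\cP_q(k^*)$ for some $k^*$, and satisfying the pseudo-defect inequality against $M_0 \con e$.

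The main obstacle is transferring the pseudo-defect inequality from $M_0 \con e$ to $M_0$ with the same $k^*$. Using $\tfrac{q^{r(M_0)+k^*}-1}{q-1} = q \cdot \tfrac{q^{r(M_0)-1+k^*}-1}{q-1} + 1$, the required inequality reduces (for $M = M_0 \con e$) to $\elem(M_0) \le q\cdot \elem(M_0 \con e)$. This typically fails by exactly $1$ for a generic contraction point (where $\elem(M_0) = q \cdot \elem(M_0 \con e) + 1$, as in a projective geometry), so strictness must come from the non-genericity of $e$: when $e$ is an extra element of $D$ or lies on a non-modular line, its contraction absorbs additional structure in $\cl_{M_0}(K)$ and the density of $M_0$ falls short of $q \cdot \elem(M_0 \con e) + 1$. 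Establishing this strict drop carefully, using the local representability $\cP_q(j, s)$ of $M_0 \con K$ together with the structural information that Lemma~\ref{recognition} detects within $\cl_{M_0}(K)$, is the technical heart of the proof; the non-nestedness of the classes $\cP_q(\cdot)$ in $k$ makes it essential that the chosen $k^*$ be propagated faithfully across the recursion rather than ``guessed'' independently at each step.
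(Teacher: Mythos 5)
Your proposal takes a fundamentally different route from the paper, and unfortunately it has gaps that appear fatal to the argument as sketched.

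The paper does not induct on $t$ at all. Instead it fixes $t$ once and for all, assigns to every suitable minor $N$ of $M_0$ a $4$-tuple $\sigma(N) = (\sigma_1,\sigma_2,\sigma_3,\sigma_4)$ built from the scaled mean, the density in $\cl_N(K)$, and the maximum and minimum of the fiber sizes $a_{N,K}(x)$, and then takes a minor $M$ that is lexicographically maximal in $\sigma$ subject to rank and local-representability constraints. The lex-maximality is then used, in three separate claims, to force the fiber sizes to be constant and to force the modularity hypothesis of Lemma~\ref{recognition} to hold for $M$ and $K$. Only then is Lemma~\ref{recognition} invoked once, and the density count is done in one shot. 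No recursion, no transfer of an inequality across nested calls.

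Two specific steps in your plan do not go through.

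First, your descent relies on locating a contractible element $e \in \cl_{M_0}(K)$ in the non-modular case, but this is where such an element does not exist. If the modularity hypothesis of Lemma~\ref{recognition} fails, the failure is witnessed by a line $L$ with, say, $L \cap \cl_M(K) = \varnothing$ and $\sqcap_M(L,K) = 1$; any such $L$ is essentially disjoint from $\cl_M(K)$ (lines inside $\cl_M(K)$ are automatically modular with $\cl_M(K)$). The paper indeed chooses $u \in L - \cl_M(K)$ in its modularity claim, and contracting such a $u$ does not reduce $r_M(K)$. So your induction on $t$ has no element to contract.

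Second, the defect transfer you flag as the technical heart is not just delicate but, with the information available in your setup, it looks false in general. When the recursive call returns $M = M_0 \con e$, you need $\elem(M_0) - \elem(M_0 \con e) < q^{r(M_0)-1+k^{*}}$, with $k^{*} \le c_{\cM} - 1$. But the only general bound is $\elem(M_0) < q^{r(M_0)+c}$ from Lemma~\ref{controlclass}, so the left side can be as large as roughly $q^{r(M_0) + c}$, which dwarfs $q^{r(M_0)-1+k^{*}}$. Nothing in your argument forces the contraction of $e$ to be ``lossless'' in the required sense, and this is precisely what the paper's $\sigma$-maximality is engineered to guarantee. Relatedly, your treatment of the case $D \ne \varnothing$ does not match the actual geometry: the paper handles this case by simultaneously deleting $D - x$ and contracting one element $x \in D$, which lands in $\cP_q(k+1)$ for a \emph{larger} $k$, and verifies the defect inequality with that shifted $k$; this is not a matter of ``propagating $k^{*}$ faithfully,'' and an approach that insists on keeping $k$ fixed across the recursion is aimed at the wrong target.
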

	\begin{proof}
		
		Let $c,t,j,m \ge 0$ be integers. Let $\delta = q^{-c-t-4}$. Let $b =\delta^{-1} q^{t+1+c} = q^{2t+2c+5}$, and let $\bI$ be the interval $[-b,b] \subseteq \bZ$.
		
		Let $u = 10(c+2)+t$. Let $h\colon \bI^4 \to \bZ$ be a function so that $h(\mathbf{i}) \ge u$ for all $\mathbf{i} \in \bI^4$ and $h$ is strictly decreasing with respect to the lexicographic order on $\bI^4$. Let $n_0\ge \max(m+1,2c+3t-j+5,2u+t)$ be an integer so that $q^{n_0+k} > q^{2c} + q^{c+t}$,
		\[\delta \tfrac{q^{r+t-j}-1}{q-1} > q^{t+c+1} \text{\ \ and \ \ }
		|2(q^{k+t-j}-1-(q-1)s)| < q^{r-t+j}-1\] for all $r \ge n_0$, all $0 \le k < c$ and all $s$ with $|s| \le q^{t+c+1}+ q^{2c}$.
		
		Let $n\colon \bI^4 \to \bZ$ be a function that is strictly decreasing with respect to the lexicographic order on $\bI^4$, and additionally satisfies $n(\mathbf{i}) \ge n_0$ for all $\mathbf{i} \in \bI^4$, and 
		\[n(i_1,i_2,i_3,i_4) \ge t + f_{\ref{getgoodpg}}(q,j,h(i_1,i_2,i_3,i_4),n(i_1,i_2,i_3,i_4+1))\]
		whenever $(i_1,i_2,i_3,i_4), (i_1,i_2,i_3,i_4+1) \in \bI^4$. Set $f_{\ref{finddenseprojection}}(q,c,t,j,m) = \max_{\mathbf{i} \in \bI^4}(\max(h(\mathbf{i}),n(\mathbf{i})))$.
		
		 Let $\cM \in \fM_q$ be such that $c_{\cM} \le c$. Let $M_0 \in \cM$ be a matroid with $r_0 = r(M_0) \ge f_{\ref{finddenseprojection}}(q,c,t,j,m)$ and $M_0 \con K \in \cP_q(k,f_{\ref{finddenseprojection}}(q,c,t,j,m))$. For each minor $N$ of $M_0$ for which $r_N(K) = t$ and $N \con K \in \cP_q(k,h')$ for some $h' \ge 2$, we define a $4$-tuple $\sigma(N)$ measuring the `symmetry' and `modularity' of $N$ relative to the set $K$. For such a minor $N$, let $G_N$ be a simplification of $N \con K$, let $A_{N,K}(x) = \cl_N(\{x\} \cup K) - \cl_N(K)$, and let $a_{N,K}(x) = \elem(N|A_{N,K}(x))$. The sets $A_{N,K}(x)\colon x \notin \cl_N(K)$ correspond to elements of $G_N$; let  $\mu(N),\nu(N)$ and $\rho(N)$ respectively denote the mean, minimum, and maximum of $a_{N,K}(x)$ over all $x \in E(G_N)$. Let
		\begin{align*}
			\sigma_1(N) &= \left\lfloor \delta^{-1}(\mu(N) - \mu(M_0))\right\rfloor, \\
			\sigma_2(N) &= 
			 \elem(N|\cl_N(K)) - \elem(M_0|\cl_{M_0}(K)),\\
			\sigma_3(N) &= \rho(N) - \rho(M_0),\\
			\sigma_4(N) &= \nu(N) - \nu(M_0), \text{\ and}\\
			\sigma(N) &= (\sigma_1,\sigma_2,\sigma_3,\sigma_4)
		\end{align*}
		 Note that $\nu(N) \le \mu(N) \le \rho(N) \le q^{t+c+1}$ and $\elem(N|\cl_N(K)) \le q^{t+c}$, so $\sigma(N) \in \bI^4$ for any such minor $N$. Let $(i_1,i_2,i_3,i_4)$ be  maximal in the lexicographic order on $\bI^4$ such that there is a minor $M$ of $M_0$ of rank at least $n(i_1,i_2,i_3,i_4)$ for which $r_M(K) = t$, $M \con K \in \cP_q(j,h(i_1,i_2,i_3,i_4))$,  and $\sigma(M,K) \ge_{\lex} (i_1,i_2,i_3,i_4)$. Subject to the choice of $(i_1,i_2,i_3,i_4)$, choose such an $M$ for which $|M|$ is as large as possible.  It is clear from the monotonicity of $h(\cdot)$ and $n(\cdot)$ that $\sigma(M) = (i_1,i_2,i_3,i_4)$. Let $r = r(M)$; we have $r \ge n(i_1,i_2,i_3,i_4)\ge n_0$.
								
		 We argue that $M$ (or some single-element contraction of $M$) is the required matroid by proving a succession of claims that use the maximality of $(i_1,i_2,i_3,i_4)$ to show that $M$ is very highly symmetric and modular with respect to $K$. Let $G$ be a simplification of $M \con K$, so $r(G) = r-t+j$ and $|G| = \frac{q^{r-t+j}-1}{q-1} \ge q^{r-t+j-1}$. Let $Y = \{y \in E(G)\colon a_{M,K}(y) < \rho(M)\}$.
		
		\begin{claim}
			$|Y| < \tfrac{1}{2}|G|$.
		\end{claim}
		\begin{proof}
			Suppose otherwise, so $|Y| \ge \tfrac{1}{2}|G| \ge q^{-1}|G| \ge q^{r-t+j-2}$. 
			
		Let $x_0 \in E(G)$ satisfy $a_{M,K}(x_0) = \rho(M)$. For each $y \in Y$, let $\{y,y',x_0\}$ be a triangle of $G$. The line $\cl_M(\{x,y'\})$ is skew in $M$ to $K$ but not to $K \cup y$, and contains at most one element of $A_{M,K}(y)$. Since $a_{M,K}(x_0) > a_{M;K}(y)$, there is thus some $x_y \in A_{M,K}(x_0)$ such that $\cl_M(\{x_y,y'\})$ does not intersect $A_{M,K}(y)$, from which it follows that $y' \in A_{M \con x_y,K}(y)$ and so $a_{M \con x_y,K}(y) > a_{M,K}(y)$. Since $a_{M,K}(x_0) \le q^{t+c+1}$, there is some $v \in A_{M,K}(x_0)$ and a set $Y' \subseteq Y$ such that $|Y'| \ge q^{-t-c-1}|Y|$ and $x_y = v$ for all $y \in Y'$. Note that $v \notin Y'$. Let $G'$ be a simplification of $G \con v$; note that $G' \in \cP_q(k,h(i_1,i_2,i_3,i_4)-1)$. For each $x \in E(G')$, let $L_x = \cl_G(\{x,v\})-\{v\}$. We have $a_{M \con v,K}(x) = a_{M,K}(f)$ for all $f \in L_x$, so
		\begin{align*}
			a_{M \con v,K}(x) &= q^{-1}\sum_{f \in L_x}a_{M \con f,K}(f) \\
			& \ge q^{-1} \left(\sum_{f \in L_x} a_{M,K}(f) + |Y' \cap L_x|\right).
		\end{align*}
		 Using the fact that the sets $\{L_x\colon x \in E(G')\}$ partition $E(G \del v)$, we can sum over all $x \in E(G')$ to obtain
		\begin{align*}
			\sum_{x \in E(G')} a_{M \con v,K}(x) &\ge q^{-1}\left(\sum_{f \in E(G \del v)} a_{M,K}(f) + |Y'|\right) \\ 
			&= q^{-1} (|G| \mu(M) - a_{M,K}(v)) + q^{-1}|Y'| \\ 
			&\ge |G'| \mu(M) - q^{c+t} + q^{-c-t-2}|Y|\\
			&\ge |G'| \mu(M) - q^{r-2t-c+j-5} + q^{-c-t-2}q^{r-t+j-2}\\
			&=  |G'| \mu(M) + (q-1)\delta q^{r-t-1+j}\\
			&> (\mu(M,K) + \delta)|G'|,		
		\end{align*}
		where we use $|G'| = \frac{q^{r-t+j-1}-1}{q-1} < q^{-1}|G|$ and $\delta = q^{-c-t-4}$, as well as our lower bound for $|Y|$, and the fact that $r \ge n_0$ so $c+t \le r-2t-c+j-5$. 
		
		This gives $\sigma_1(M \con v) > i_1$, and so $\sigma(M \con v) \ge_{\lex}(i_1+1,0,0,0) >_{\lex}(i_1,i_2,i_3,i_4)$. Now we have $r_{M \con v}(K) = t$ and, since $h(i_1,i_2,i_3,i_4)-1 \ge h(i_1+1,0,0,0)$, we also have $(M \con v) \con K \in \cP_q(j,h(i_1+1,0,0,0))$.  Moreover $r(M \con v) = r(M) - 1 \ge n(i_1,i_2,i_3,i_4)-1 \ge n(i_1+1,0,0,0)$ so $M \con v$ gives a contradiction to the maximality of $(i_1,i_2,i_3,i_4)$. 		
	\end{proof}
		
	\begin{claim} $\nu(M) = \mu(M) = \rho(M)$. 
	\end{claim}
	\begin{proof}[Proof of claim:]
		Suppose otherwise, so $\nu(M) \le \rho(M)-1$. Let $X = E(G)-Y$. We have $|X| \ge \tfrac{1}{2}|G|$; since \[r(G) \ge n(i_1,i_2,i_3,i_4)-t \ge f_{\ref{getgoodpg}}(q,j,h(i_1,i_2,i_3,i_4),n(i_1,i_2,i_3,i_4+1)),\]
		Lemma~\ref{getgoodpg} implies that there is an independent set $C$ of $G$ so that $G \con C \in \cP_q(j,h(i_1,i_2,i_3,i_4))$, $r(G \con C) \ge n(i_1,i_2,i_3,i_4+1)$, and each parallel class of $G \con C$ contains an element of $X$. It follows that $r_{M \con C}(K) = t$ and $M \con (C \cup K) \in \cP_q(j,h(i_1,i_2,i_3,i_4+1))$ (using monotonicity of $h(\cdot)$), and every parallel class of $G \con C$ contains some $f \in X$ for which $a_{M \con C,K}(f) \ge a_{M,K}(f) = \rho(M)$. So $\nu(M \con C) \ge \rho(M) > \nu(M)$. This implies that $\sigma_1(M \con C) \ge \sigma_1(M)$, $\sigma_3(M \con C) \ge \sigma_3(M)$ and $\sigma_4(M \con C) > \sigma_4(M)$. Clearly $\sigma_2(M \con C) \ge \sigma_2(M)$; therefore $\sigma(M \con C) \ge_{\lex}(i_1,i_2,i_3,i_4+1)$, so $M \con C$ gives a contradiction to the maximality of $(i_1,i_2,i_3,i_4)$.
	\end{proof}
	\begin{claim}\label{kmodularity}
		For every line $L$ of $M$ and every $f \in E(M)- \cl_M(K)$, the pairs $(\cl_M(K),L)$ and $(\cl_M(K \cup \{f\}),L)$ are modular in $M$. 
	\end{claim}
	\begin{proof}
		If one of these pairs is not modular, then there is a line $L$ of $M$ such that either
		\begin{enumerate}[(a)]
			\item\label{topnonmodular} $L \cap \cl_M(K) = \varnothing$ and $\sqcap_M(L,K) = 1$, or
			\item\label{sidenonmodular} $\sqcap_M(L,K) = 0$ and $L \cap \cl_M(K \cup \{f\}) = \varnothing$ and $\sqcap_M(L,K \cup \{v\}) = 1$ for some $v \in E(M) - \cl_M(K \cup \{v\})$. 
		\end{enumerate}
		In either case, there is some $u \in L - \cl_M(K)$. Now $r_{M \con u}(K) = t$ and $(M \con u) \con K \in \cP_q(j,h(i_1,i_2,i_3,i_4)-1)$. Since $a_{M \con u,K}(f) \ge a_{M,K}(f) = \mu(M)$ for each $f \in E(G) - \{u\}$, we have 
		\[\rho(M \con u) \ge \mu(M \con u) \ge \mu(M) = \rho(M),\]
		 which implies that $\sigma_1(M \con u) \ge \sigma_1(M)$ and $\sigma_3(M \con u) \ge \sigma_3(M)$. It is clear that $\sigma_2(M \con u) \ge \sigma_2(M)$. Moreover, we have $\elem(M \con u|\cl_{M \con u}(K)) > \elem(M|\cl_M(K))$ in case (\ref{topnonmodular}) and $\rho(M \con u) \ge a_{M \con u,K}(v) > a_{M,K}(v) = \rho(M)$ in case (\ref{sidenonmodular}), so either $\sigma_2(M) > i_2$ or $\sigma_3(M) > i_3$. In either case, $\sigma(N) \ge_{\lex} (i_1,i_2,i_3+1,0)$.  As in the proof of the first claim, we use $h(i_1,i_2,i_3+1,0) > h(i_1,i_2,i_3,i_4)$ and $n(i_1,i_2,i_3,i_4) > n(i_1,i_2,i_3+1,0)$ to obtain a contradiction to the maximality of $(i_1,i_2,i_3,i_4)$. 
	\end{proof}
	
	Recall that $u = 10(c+2)+t$. By the definitions of $h(\cdot)$ and $n(\cdot)$, we have $r(M) \ge 2u+t$ and $M \con K \in \cP_q(j,u)$. It now follows from~\ref{kmodularity} and Lemma~\ref{recognition} that $M \del D \in \cP_q(k)$ for some $k$ and some $D \subseteq \cl_M(K)$. Note that $k < c$.  Choose $D$ so that $|D|$ is as small as possible; it follows from this choice that $D$ is a union of parallel classes of $M|\cl_M(K)$. Let $d$ be an integer so that $\elem(M \del D) = \frac{q^{r+k}-1}{q-1}-qd$, where $0 \le d \le \tfrac{q^{2k}-1}{q^2-1} < q^{2c-1}$.
	
	\begin{claim}
		$\mu(M) = q^{t+k-j}$ and $\elem(M|(\cl_M(K)-D)) = \frac{q^{t+k+j}-1}{q-1}-qd$. 
	\end{claim}
	\begin{proof}
	We have $\elem(M) = \frac{q^{r+k}-1}{q-1} - qd + \elem(M|D)$. Let $d' = \elem(M|D)$ and $d'' = \elem(M|(\cl_M(K)-D))$. Now
	\begin{align*}
		\tfrac{q^{r+k}-1}{q-1}-qd &= \elem(M \del D) \\ 
		&= \elem(M \con K)\mu(M) + \elem(M|(\cl_M(K)-D))\\
		&= \tfrac{q^{r-t+j}-1}{q-1}\mu(M) + d''.
	\end{align*}
	Rearranging gives 
	\begin{align*}
		\mu(M) &= \tfrac{q^{r-k}-1}{q^{r-t+j}-1} - \tfrac{(q-1)(d''+d)}{q^{r-t+j}-1}\\
		&= q^{k+t-j} - \tfrac{q^{k+t-j}-1-(q-1)(d''+d)}{q^{r-t+j}-1}.
	\end{align*}
	Since $|d''+d| \le q^{c+t+1} + q^{2c}$ and $r \ge n_0$, the second `error' term has absolute value less than $\tfrac{1}{2}$; since $\mu(M) = \rho(M) \in \bZ$, it follows that $\mu(M) = q^{k+t-j}$. Therefore the error term is exactly zero, giving $(q-1)(d''+d) ={q^{k+t+j}-1}$. The claim now follows.
	\end{proof}
	
	 Since $i_1 = \sigma_1(M)$, we have $\mu(M_0) \le \mu(M) - \delta i_1$, so 
	\begin{align*}
		\elem(M_0) &= \mu(M_0)\elem(M_0 \con K) + \elem(M_0 | \cl_{M_0}(K)) \\
		&\le (q^{k+t-j} - \delta i_1)\tfrac{q^{r_0-t+j}-1}{q-1} + d' +d'' -i_2 \\
		&= \tfrac{q^{r_0+k}-1}{q-1} - \tfrac{q^{k+t-j}-1}{q-1} - \delta i_1\tfrac{q^{r_0+t-j}-1}{q-1} + d'+ d''-i_2\\
		&= \tfrac{q^{r_0+k}-1}{q-1} -qd + d'' -\delta i_1\tfrac{q^{r_0+t-j}-1}{q-1} - i_2.\\
		&= \tfrac{q^{r_0+k}-1}{q-1}-qd + d'' - i_1 q^{t+c+1} - i_2,
		\end{align*}
		where we use $r_0 \ge n_0$. Since $(i_1,i_2,i_3,i_4) \ge_{\lex} (0,0,0,0)$, we have $i_1 \ge 0$, and either $i_2 \ge 0$ or $i_1 > 0$. Now $i_2 \le q^{t+c+1}$, so in either case we have $q^{c+t+1} i_1 + i_2 \ge 0$, with equality only if $i_1 = i_2 = 0$. Thus 
		\[\elem(M_0) \le \tfrac{q^{r_0+k}-1}{q-1} - qd + d'',\]
		with equality only if $i_1 = i_2 = 0$ and $\mu(M_0) = \mu(M)$. We now claim that either $D$ is empty, or another matroid $M'$ satisfies the lemma's conclusion:
	
	\begin{claim}
	Either $D = \varnothing$, or $M_0$ has a minor $M'$ of rank at least $m$ such that $M' \in \cP_q(k+1)$ and $\frac{q^{r(M')+k+1}-1}{q-1} - \elem(M') < \frac{q^{r(M_0)+k+1}-1}{q-1} - \elem(M_0)$.
	\end{claim}
	\begin{proof}[Proof of claim:]
	
		Suppose that $D \ne \varnothing$. Note that $D$ contains no loops by its minimality; let $x \in D$ and $M' = M \con x \del (D-x)$. Since $M \del D \in \cP_q(k)$ and $M \del (D-x) \notin \cP_q(k)$, we have $M' \in \cP_q(k+1)$, and $k+1 < c$, so $\elem(M') \ge \frac{q^{(r-1)+k+1}-1}{q-1} - q^{2c}$. Now
		\begin{align*}
			\elem(M_0) - \elem(M') &\le \tfrac{q^{r_0+k}-1}{q-1} + q^{t+c} - \left(\tfrac{q^{r+k}-1}{q-1} - q^{2c}\right) \\
			&= \tfrac{q^{k+1}}{q-1}(q^{r_0}-q^{r-1}) + q^{2c} + q^{c+t} - q^{r_0 + k}\\
			&< \tfrac{q^{k+1}}{q-1}(q^{r_0}-q^{r(M')}),
		\end{align*}
		since $q^{r_0+k} \ge q^{n_0+k} > q^{2c} + q^{c+t}$. Now, since $M' \in \cP_q(k+1)$ and $r(M') = r-1 \ge n(i_1,i_2,i_3,i_4)-1 \ge m$, by rearranging the calculation above we see that $M'$ satisfies the claim. 	\end{proof}
	
	We may thus assume that $D = \varnothing$ and so $d'' = 0$ and $M \in \cP_q(k)$. This gives $\elem(M) = \frac{q^{r+k}-1}{q-1} - qd$ and $\elem(M_0) \le \frac{q^{r_0+k}-1}{q-1}-qd$ with equality only if $i_1 = i_2 = 0$ and $\mu(M) = \mu(M_0)$. If equality does not hold, then $\frac{q^{r+k}-1}{q-1} - \elem(M) < \frac{q^{r_0+k}-1}{q-1} - \elem(M_0)$, so $M$ satisfies the lemma. Assume that equality holds; since $(0,0,0,0) \le_{\lex} (i_1,i_2,i_3,i_4) = (0,0,i_3,i_4)$ and $\mu(M) = \rho(M)$, we have
	\[0 \le i_3 = \rho(M)-\rho(M_0) \le \mu(M) - \mu(M_0) = 0, \]
	so $\rho(M_0) = \mu(M_0) = \mu(M)$. Therefore $\rho(M_0) = \nu(M_0) = \mu(M)$, so $\sigma(M) = (i_1,i_2,i_3,i_4) = (0,0,0,0)$, and now $M = M_0$ by the maximality of $|M|$.
	\end{proof}

\section{The Main Result}

For each prime power $q$, let \[D_q = \{(k,d) \in \bZ_{\ge 0}^2\colon 0 \le d \le q\tfrac{q^{2k}-1}{q^2-1}\}.\] For each $(k,d) \in D_q$, let $\cP_q^d(k)$ be the set of matroids $M$ in $\cP_q(k)$ satisfying $\elem(M) = \frac{q^{r(M)+k}-1}{q-1}-d$. (Note that $\cP_q^d(k) = \varnothing$ unless $d$ is a multiple of $q$.) Define an ordering $\prec$ on $D_q$ by $(k,d) \prec (k',d')$ if and only if $(k,-d) <_{\lex} (k',-d')$. If $(k,d) \prec (k',d')$, then the matroids in $\cP_q^{d'}(k')$ are `denser' than those in $\cP_q^d(k)$. 

 We now prove a technical lemma that combines Lemmas~\ref{getnearspanningpg} and~\ref{finddenseprojection}, and will easily imply our main theorem. 

\begin{lemma}\label{maintechcompute}
	There is a computable function $f_{\ref{maintechcompute}} \colon \bZ^3 \to \bZ$ so that, for each prime power $q$ and all integers $c,m \ge 0$, if $\cM \in \fE_q$ satisfies $c_{\cM} \le c$, and $M \in \cM$ is such that $r(M) \ge f_{\ref{maintechcompute}}(q,c,m)$ and $\elem(M) \ge \tfrac{q^{r(M)+k}-1}{q-1}-d$ for some $(k,d) \in D_q$, then either 
	\begin{itemize}
	\item $M \in \cP_q(k)$, or
	\item there exists $(k',d') \in D_q$ with $(k',d') \succ (k,d)$ and a minor $M'$ of $M$ with $r(M') \ge m$ and $M' \in \cP_q^{d'}(k')$. 
	\end{itemize}
\end{lemma}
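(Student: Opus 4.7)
The plan is to chain the preliminary lemmas into a single argument ending in an application of Lemma~\ref{finddenseprojection}. I first define $g(n) = \tfrac{q^{n+k}-1}{q-1} - d$ and apply Lemma~\ref{roundness} with this $g$ to obtain a weakly round restriction $M_1$ of $M$ whose rank is still very large and which satisfies $\elem(M_1) \ge g(r(M_1))$, strictly whenever $M_1 \ne M$. Next, since $g(r(M_1)) > q^{-1}q^{r(M_1)}$ for large $r(M_1)$, Theorem~\ref{mccdhj} produces a large $\AG(h_0-1,q)$-restriction $R$ in $M_1$, with $h_0$ chosen so that all subsequent lemmas have room to operate.

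I then pass to a contraction-minor $M_2$ of $M_1$, chosen to be minor-minimal subject to: weak roundness (automatic under contraction), having density at least $\beta q^{r(M_2)}$ for a carefully chosen $\beta > 0$, carrying an $\AG(h_0-1,q)$-restriction $R'$, and having rank above a threshold. Minor-minimality then yields $\elem(M_2) > q\cdot\elem(M_2 \con e)$ for each $e \in E(M_2) - \cl_{M_2}(E(R'))$. This is precisely the hypothesis of Lemma~\ref{getnearspanningpg}, which supplies a set $C \subseteq E(M_2)$ with $r_{M_2}(C) \le f_{\ref{getnearspanningpg}}(q,c)$ such that $M_3 := M_2 \con C$ contains a spanning $\PG(r(M_3)-1,q)$-restriction $G$.

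To feed this into Lemma~\ref{finddenseprojection}, I take $M_0 := M_2$ and $K := C$ (so $t := r_{M_2}(C)$) and must verify that $M_3 \in \cP_q(j,H)$ for some bounded $j$ and some $H \ge f_{\ref{finddenseprojection}}(q,c,t,j,m)$. The spanning $\PG$-restriction $G$ handles the bulk of $M_3$; the remaining ``extras'' in $\si(M_3)\setminus E(G)$ are controlled by the class density bound $\elem(M_3) < q^{r(M_3)+c}$ and by the absence of long lines in $\cM$. By absorbing these extras (whose $M_2$-span is bounded via the $c_\cM$-constraints) into the set $K$, I arrange $M_2 \con K \in \cP_q(j,H)$ for some $j \le c$ and for $H$ as prescribed. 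Lemma~\ref{finddenseprojection} then yields a minor $M^* \in \cP_q(k^*)$ of $M_2$ with $r(M^*) \ge m$ and either $M^* = M_2$ or a strict drop in deficiency with respect to $k^*$, namely $\tfrac{q^{r(M^*)+k^*}-1}{q-1} - \elem(M^*) < \tfrac{q^{r(M_2)+k^*}-1}{q-1} - \elem(M_2)$.

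The last step is to translate this conclusion into the dichotomy of Lemma~\ref{maintechcompute}. A density comparison (using $\elem(M) \ge g(r(M))$ tracked through the restriction and contraction steps, together with the fact that $\elem(M^*) \le \tfrac{q^{r(M^*)+k^*}-1}{q-1}$) forces $k^* \ge k$: if $k^* < k$, then the deficiency inequality combined with $\elem(M_2) \ge g(r(M_2))$ would force $d \gtrsim q^{r(M_2)+k^*}$, which contradicts $d \le q\tfrac{q^{2k}-1}{q^2-1}$ once $r(M_2)$ is large. If $k^* > k$, then $M^* \in \cP_q^{d^*}(k^*)$ automatically satisfies $(k^*,d^*) \succ (k,d)$ and gives output~(b). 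If $k^* = k$, then either $M_1 = M$ and $M^* = M_2 = M$, giving $M \in \cP_q(k)$ and output~(a), or somewhere along the chain we had strict inequality (from Lemma~\ref{roundness} with $M_1 \ne M$, or from the deficiency inequality in Lemma~\ref{finddenseprojection}), which yields $d^* < d$ and output~(b) with $M' = M^*$. The main obstacle I foresee is the verification that $M_3 \in \cP_q(j,H)$: cleanly bounding the ``extras'' and folding them into a bounded-rank $K$ is technical and requires combining the modularity arguments underlying Lemma~\ref{recognition} with the class-density constraints from Lemma~\ref{controlclass}.
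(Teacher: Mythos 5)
Your overall plan is the same chain the paper uses: Lemma~\ref{roundness} to pass to a weakly round restriction, Theorem~\ref{mccdhj} to find the affine geometry, a minor-minimality argument to get the one-element-contraction density drop, Lemma~\ref{getnearspanningpg} to expose a spanning projective geometry restriction after a bounded-rank contraction, and then Lemma~\ref{finddenseprojection} followed by a case analysis on $k'$. The final case analysis you describe is also essentially the one in the paper (your $k^*<k$ argument is phrased loosely but is the right contradiction).

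The genuine gap is exactly the one you flag: you never actually establish the hypothesis $M_2 \con K \in \cP_q(j,\,H)$ of Lemma~\ref{finddenseprojection}, and ``absorbing the extras into $K$'' does not do it. Having a spanning $\PG$-restriction $G$ of $M_3 = M_2\con C_1$ and knowing the remaining elements span a bounded-rank flat gets you that $M_3$ is, after contracting that flat, in $\cP_q(s)$ for some $s<c$ --- but that is membership in $\cP_q(s)$, not in $\cP_q(j,H)$ for a large $H$. The local representability parameter $H$ is not a consequence of density or modularity bounds; it requires a separate argument. The paper's resolution is a two-stage construction you would need to reproduce: first take $C_2$ to be a maximal flat of $M_3$ disjoint from $E(G)$ (maximality forces $M_3\con C_2 \in \cP_q(s)$ with $s<c$); then choose a set $C_3\subseteq E(G)$ of rank at most a prescribed budget $\sum_{i<s-j}h_i$ so as to minimize $j := r_{M_3\con C_3}(C_2)$. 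It is this minimality of $j$ relative to the rank budget, together with the tower $h_0<h_1<\cdots$, that forces every low-rank flat of $G\con C_3$ to be skew to $C_2$, which is precisely what $\cP_q(j,h_{s-j})$ means by Lemma~\ref{smoothnessequiv}. That minimal-$j$ trick is the actual content of this step and is not recoverable from ``combining the modularity arguments of Lemma~\ref{recognition} with the $c_\cM$ constraints''; Lemma~\ref{recognition} is what Lemma~\ref{finddenseprojection} uses internally, not what produces its input. Until you supply a construction of this kind, the proof does not go through.

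A smaller bookkeeping point: you at one point phrase the minor-minimality condition on $M_2$ as density at least $\beta q^{r(M_2)}$, but for the final case analysis to land you need to track the exact quantity $\elem(\cdot) \ge g(\cdot)+1$ (where $g(n) = \tfrac{q^{n+k}-1}{q-1}-d$), as you do elsewhere; the two are not interchangeable, since the case $k'=k$ hinges on whether strict inequality was ever gained along the chain.
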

\begin{proof}
	Let $q$ be a prime power, and $c,m \ge 0$. Define a sequence $h_0, \dotsc, h_c$ recursively by 
	\[h_\ell = \max \left\{f_{\ref{finddenseprojection}}(q,c,t,j,m)\colon 0 \le j \le c,\ \  0 \le t \le m+c + \sum_{i=0}^{\ell-1} h_i \right\}\]
	for each $\ell \in \{0, \dotsc, c\}$. (The summation is zero for $\ell = 0$.) Let $n_2 = \max_{0 \le i \le c}h_i$. Let $n_1 = \max(c,f_{\ref{mccdhj}}(q,c,q^{-1},n_2))$ and let $n_0 = \max(m,f_{\ref{roundness}}(q,c,c+1,m))$. Set $f_{\ref{maintechcompute}}(q,c,m) = n_0$.

	Let $\cM \in \fE_q$ be such that $c_{\cM} \le c$. Write $g(n)$ for $\frac{q^{n+k}-1}{q-1} - d$; note that, since $k < c$ and $(k,d) \in D_q$, we have $g(n) > q g(n-1) \ge q^{n-1}$ for all $n \ge c$.  Let $M_0 \in \cM$ satisfy $r(M_0) \ge n_0$ and $\elem(M_0) \ge g(r(M_0))$.  By Lemma~\ref{roundness}, we see that $M_0$ has a weakly round restriction $M_1$ such that $r(M_1) \ge n_1$ and either $M_1 = M_0$ or $\elem(M_1) \ge g(r(M_1))+1$. 
	
	 By Theorem~\ref{mccdhj}, $M_1$ has an $\AG(n_2-1,q)$-restriction $R$. Let $M_2$ be a minimal contraction-minor of $M_1$ such that 
	\begin{itemize}
		\item $R$ is a restriction of $M_2$, and 
		\item either $M_2 = M_0$ or $\elem(M_2) \ge g(r(M_2))+1$. 	
	\end{itemize}
	Note that $r(M_1) \ge r(R) > c$, so $g(r(M_1)) > q g(r(M_1)-1)$. By minimality of $M_2$, each $e \in E(M_1) - \cl_M(E(R))$ thus satisfies 
	\[\elem(M_2 \con e) \le g(r(M_2 \con e)) < q^{-1}g(r(M_2)) \le q^{-1} \elem(M_2).\]
	
	 Since $M_1$ is weakly round, so is $M_2$; Lemma~\ref{getnearspanningpg} implies that there is a set $C_1 \subseteq E(M_2)$ so that $r_{M_2}(C_1) \le t_1$ and $M_2 \con C_1$ has a spanning projective geometry restriction $G$. Note that $r(G) \ge n_2 - t_1 \ge m+2c$; let $M_2' = M_2 \con C_1$. Let $C_2$ be a maximal flat of $M_2'$ disjoint from $E(G)$ and let $s = r_{M_2'}(C_2)$. The maximality of $C_2$ implies that $M_2 ' \con C_2 \in \cP_q(s)$, so we have $s < c$. 
	 
	 By maximality of $C_2$, every element of $M_2'$ is spanned by $C_2 \cup \{e\}$ for some $e \in E(G)$, and so $(M_2' \con C_2)|E(G) \in \cP_q(j)$, as $C_2$ spans no element of $G$. Let $j$ be a minimal nonnegative integer so that there exists $C_3 \subseteq E(G)$ for which $r_{M_2'}(C_3) \le \sum_{i=0}^{s-j-1} h_i$ and $r_{M_3 \con C_3}(C_2) = j$. Clearly $0 \le j \le s$. If $j > 0$, then the minimality of $j$ implies that every flat of $G \con C_3$ of rank at most $h_{s-j}$ is skew to $C_2$ in $M_2' \con C_3$; since $h_j \ge 1$, this implies that $M_2' \con (C_2 \cup C_3) \in \cP_q(s,h_{s-j})$. If $j = 0$ then $M_2' \con (C_2 \cup C_3) \in \cP_q(0) = \cP_q(0,h_s)$, so the same conclusion holds. Let $K = C_1 \cup C_2 \cup C_3$ and $t = r_{M_2}(K).$ We have 
	 \[t \le r_{M_2}(C_1) + r_{M_2'}(C_2) + r_{M_2'}(C_3) \le m + c +  \sum_{i = 0}^{s-j-1}h_i.\]
	 Therefore $K$ is a rank-$t$ set in $M_2$ such that $M_2 \con K \in \cP_q(s,h_{s-j})$, where $ f_{\ref{finddenseprojection}}(q,c,t,j,m) \le h_{s-j} $ by the definition of $h_{s-j}$. Now $r(M_2) \ge r(G) = m_1 \ge h_{s-j} \ge f_{\ref{finddenseprojection}}(q,c,t,j,m)$; it follows from Lemma~\ref{finddenseprojection} that $M_2$ has a minor $M_3 \in \cP_q(k')$ for some $k' \in \bZ$, such that $r(M_3) \ge m$ and either $M_3 = M_2$ or 
	 \[\tfrac{q^{r(M_3)+k'}-1}{q-1} - \elem(M_3) < \tfrac{q^{r(M_2)+ k'}}{q-1} - \elem(M_2).\] 
	 Let $\elem(M_3) = \frac{q^{r(M_3)+k'}-1}{q-1}-d'$. If $k' < k$ then the left hand side above is $d' \ge 0$ and the right hand side is at most $\tfrac{q^{r(M_2)+k'}-q^{r(M_2)+k}}{q-1} +d \le d-q^{r(M_2)+k-1} < q^{2k} - q^{c+k} \le 0$, a contradiction. If $k' > k$, then $(k',d') \succ (k,d)$, so $M'= M_3$ satisfies the lemma; assume that $k = k'$. 
	 
	 If $M_3 \ne M_2$, then the above inequality gives $d' < d$, so $(k',-d') \succ (k,-d)$; again, $M' = M_3$ will do. If $M_3 = M_2$, then we either have $M_3 = M_0$ (in which case $M_0 \in \cP_q(k)$ and the first outcome holds) or \[\tfrac{q^{r(M_3)+k}-1}{q-1} - d' = \elem(M_3) \ge g(r(M_3))+1 = \tfrac{q^{r(M_3)+k}-1}{q-1} - (d-1).\]
	 Therefore $0 \le d' < d$, so  $(k,d') \in D_q$ and $(k,d') \succ (k,d)$, so $M' = M_3$ satisfies the lemma. 
	\end{proof}
	
	We now use Lemma~\ref{maintechcompute}  to prove a slightly stronger version of our main result, Theorem~\ref{main}. To see that the statement below implies Theorem~\ref{main}, observe that every $\cM \in \fE_q$ contains the class $\cP_q^0(0)$ of all $\GF(q)$-representable matroids, but is disjoint from $\cP_q(k)$ for all $k \ge c_{\cM}$. It follows easily from maximality that the integers $c,k,d_0,m$ all exist for $\cM$. The advantage of the version stated below is that it gives a computable bound on when the `sufficiently large' condition in Theorem~\ref{main} comes into effect, provided $q,c_{\cM}$ and $m$ are known; this will be useful in the next section.
		
\begin{theorem}\label{maintech}
	There is a computable function $f_{\ref{maintech}}\colon \bZ^3 \to \bZ$ so that, for every prime power $q$ and all integers $c,m \ge 0$, if $\cM \in \fE_q$ satisfies $c_{\cM} \le c$, and $(k,d_0) \in D_q$ is such that $\cM \cap \cP_q^{d_0}(k)$ contains matroids of arbitrarily large rank, but $\cM \cap \cP_q^{d'}(k')$ contains no matroid of rank at least $m$ for any $D_q \ni (k',d') \succ (k,d_0)$, then
	\begin{itemize}
	\item $d_0 = qd$ is a multiple of $q$, 
	\item $h_{\cM}(n) = \frac{q^{n+k}-1}{q-1}-qd$ for all $n \ge f_{\ref{maintechcompute}}(q,c,m)$, and
	\item  for all $M \in \cM$ such that $\elem(M) = \frac{q^{r(M)+k}-1}{q-1}-qd$ and $r(M) \ge f_{\ref{maintech}}(q,c,m)$, we have $M \in \cP_q^{qd}(k)$.
	\end{itemize} 
\end{theorem}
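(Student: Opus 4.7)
The plan is to apply Lemma~\ref{maintechcompute} with the pair $(k, d_0) \in D_q$, while obtaining the lower bound by iterating Lemma~\ref{genericpoint}. Since $\cP_q^{d_0}(k)$ is nonempty by hypothesis, Lemma~\ref{projectiondensity}(\ref{density}) forces $d_0$ to be a multiple of $q$, so write $d_0 = qd$. Set $f_{\ref{maintech}}(q,c,m) := f_{\ref{maintechcompute}}(q,c,m)$, possibly enlarged by a constant depending only on $q$ and $c$ in order to absorb the threshold in the lower-bound iteration below.

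For the lower bound, I would start with an arbitrarily-large-rank $M_N \in \cM \cap \cP_q^{qd}(k)$ and iteratively contract single elements lying outside the exceptional set $X$ of size at most $q^{58k^4}$ furnished by Lemma~\ref{genericpoint}. Each such contraction preserves the density $\tfrac{q^{r+k}-1}{q-1} - qd$, and the structural part of the proof of Lemma~\ref{genericpoint} shows that a generic $e \notin X$ creates no new collapsing parallel classes in the projection-of-$\PG$ presentation $\wh M \con K$ (so $K$ remains an independent flat of $\wh M \con e$). Hence $M \con e \in \cP_q^{qd}(k)$, and the iteration can be reapplied. Because $\elem$ is exponential in rank and $k < c_{\cM} \le c$ is bounded, valid contractions exist at every rank above a threshold computable from $q$ and $c$; this yields a member of $\cM \cap \cP_q^{qd}(k)$ at every such rank, and therefore $h_{\cM}(n) \ge \tfrac{q^{n+k}-1}{q-1} - qd$ for all large $n$.

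For the upper bound and extremal characterization simultaneously, let $M \in \cM$ satisfy $r(M) \ge f_{\ref{maintech}}(q,c,m)$ and $\elem(M) \ge \tfrac{q^{r(M)+k}-1}{q-1} - qd$, and apply Lemma~\ref{maintechcompute} with the pair $(k, qd) = (k, d_0) \in D_q$. The second alternative would produce a minor $M' \in \cM \cap \cP_q^{d'}(k')$ of rank at least $m$ with $(k', d') \succ (k, d_0)$, contradicting the maximality hypothesis on $(k, d_0)$. Hence $M \in \cP_q(k)$, and Lemma~\ref{projectiondensity}(\ref{density}) gives $\elem(M) = \tfrac{q^{r(M)+k}-1}{q-1} - qd''$ for some integer $d'' \ge 0$. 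If $\elem(M) > \tfrac{q^{r(M)+k}-1}{q-1} - qd$ then $d'' < d$, placing $M$ itself in $\cM \cap \cP_q^{qd''}(k)$ with $(k, qd'') \succ (k, d_0)$ and again contradicting the hypothesis; thus $d'' = d$, $\elem(M) = \tfrac{q^{r(M)+k}-1}{q-1} - qd$, and $M \in \cP_q^{qd}(k)$. Combined with the lower bound this yields both the formula for $h_{\cM}(n)$ and the extremal characterization.

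The main obstacle is the lower-bound iteration: in order to reapply Lemma~\ref{genericpoint} after a contraction, one needs the stronger fact that $M \con e$ lies in $\cP_q(k)$, not merely that it satisfies the density identity. This forces the argument to extract structural information from inside the proof of Lemma~\ref{genericpoint} rather than relying only on its statement, and is the reason the overall deduction is not entirely formal.
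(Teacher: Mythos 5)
Your proposal follows essentially the same route as the paper: the lower bound from Lemma~\ref{genericpoint} applied to arbitrarily-large-rank members of $\cM \cap \cP_q^{qd}(k)$, together with the fact that $k < c_{\cM} \le c$ so the exceptional set has bounded size $q^{58k^4}\le q^{58c^4}$; and the upper bound plus extremal characterization from a single application of Lemma~\ref{maintechcompute}, with the observation that a $\cP_q(k)$-matroid of rank $\ge m$ that is strictly denser than $\frac{q^{r+k}-1}{q-1}-qd$ would itself witness a pair $\succ(k,d_0)$ and contradict the hypothesis. The paper likewise takes $f_{\ref{maintech}}(q,c,m)=\max\bigl(q^{58c^4},f_{\ref{maintechcompute}}(q,c,m)\bigr)$, which is exactly the ``enlargement by a constant depending on $q$ and $c$'' you anticipated. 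The subtlety you flag about the lower-bound iteration is genuine and the paper elides it: the statement of Lemma~\ref{genericpoint} only asserts the density identity for $M\con e$, whereas to iterate one needs $M\con e\in\cP_q^{qd}(k)$; this does follow directly from the proof of that lemma (a generic $e$ lies in no flat of $\cF_1\cup\cF_2$, so contracting $e$ keeps $K$ an independent flat of $\wh M\con e$ and creates no new collapsing parallel class), but it is not a formal consequence of the lemma as stated. Your treatment of this point is accurate.
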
	
\begin{proof}
	For each prime power $q$ and all integers $c,m \ge 0$, set $f_{\ref{maintech}}(q,c,m) = n_0 =  \max(q^{58c^4},f_{\ref{maintechcompute}}(q,c,m))$. 
	
	Let $\cM \in \fE_q$ be such that $c_{\cM} \le c$ and let $m \ge 0$ and $(k,d_0) \in D_q$ satisfy the conditions in the hypothesis for $M$. Note that $k \le c$. By Lemma~\ref{genericpoint} and the fact that $\cM \cap \cP_{q}^{d_0}(k)$ contains matroids of arbitrarily large rank, we have $d_0 = qd$ for some $d$, and $h_{\cM}(n) \ge \frac{q^{n+k}-1}{q-1} - qd$ for all $n \ge q^{58k^4}$, and therefore for all $n \ge n_0$. If $r(M) \ge n_0$ and $\elem(M) \ge \frac{q^{r(M)+k}-1}{q-1}-qd$, then Lemma~\ref{maintech} implies that either $M \in \cP_{q}^{qd}(k)$, or there is some $(k',d') \succ (k,qd)$ for which $M$ has a minor $M' \in \cP_{q}^{d'}(k')$ of rank at least $m$. The latter outcome contradicts the choice of $k,d_0,m$, so $M \in \cP_q^{qd}(k)$ for any such $M$. Therefore $h_{\cM}(n) = \frac{q^{n+k}-1}{q-1} - qd$ for all $n \ge n_0$, and any matroid in $\cM$ of rank $n \ge n_0$ whose number of points attains this function is in $\cP_{q}^{qd}(k)$. This gives the theorem.
\end{proof}

\section{Computability}\label{computabilitysection}

	In this section we will prove Theorem~\ref{compute}. Our first step is a technical result that shows that, if a class satisfies three particular conditions then its growth rate function can be determined completely with a finite computation and access to a membership oracle. The third condition is that $\cM$ is closed under a particular type of modular sum; essentially, given a matroid $M \in \cP_{q,k} \cap \cM$ and a spanning projective geometry $G$, we should be able to `extend $G$ into larger rank' and remain in $\cM$. 

\begin{lemma}\label{computability}
	Let $q$ be a prime power, and let $\cM \in \fE_q$ be a class for which there are integers $\ell,b,s \ge 0$ such that \begin{itemize}
	\item $U_{2,\ell+2} \notin \cM$, 
	\item $\PG(s-1,q') \notin \cM$ for all $q' \in \{q+1, \dotsc, \ell\}$, and 
	\item For all $k \ge 0$, if $M \in \cM \cap \cP_q(k)$ and $G \cong \PG(n-1,q)$ are matroids such that $r(M) \ge b$, $E(G) \cap E(M) = F$ and $G|F = M|F \cong \PG(r(M)-1,q)$, then $G \oplus_m M \in \cM$. 
	\end{itemize}
	Then there are integers $k,d,n_0$, all computable given $q,\ell,b$ and $s$ and a membership oracle for $\cM$, such that $h_{\cM}(n) = \frac{q^{n+k}-1}{q-1}-qd$ for all $n \ge n_0$, and so that every matroid $M \in \cM$ with $r(M)  = r \ge n_0$ and $\elem(M) = h_{\cM}(r)$ satisfies $M \in \cP_q^{qd}(k)$.
\end{lemma}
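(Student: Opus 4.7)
The plan is to pin down $(k,d,n_0)$ by locating the unique pair $(k^*,d_0^*) \in D_q$ (with $d_0^* = qd^*$) that characterizes $h_{\cM}$ for large $n$ through Theorem~\ref{maintech}, and to perform this location by a finite oracle search that the modular sum closure hypothesis makes sound. First, Lemma~\ref{controlclass} and its computable function $f_{\ref{controlclass}}(q,\ell,s)$ furnish an integer $c \geq c_{\cM}$, so the candidates are confined to the finite set $D_c := \{(k, d_0) \in D_q : k < c\}$, whose size is computable from $q$ and $c$. Set $n_0 := f_{\ref{maintech}}(q,c,b)$, which is also computable.

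Next, enumerate $D_c$ in $\prec$-decreasing order. For each $(k, d_0) \in D_c$, enumerate (up to isomorphism) the finite list of simple rank-$b$ matroids in $\cP_q^{d_0}(k)$; this is effective because membership in $\cP_q^{d_0}(k)$ can be decided from the data of a candidate projection of $\PG(b+k-1,q)$. Query the oracle on each such matroid, and declare the first $(k, d_0)$ with an affirmative query to be $(k^*, d_0^*)$. Output $(k^*, d^* := d_0^*/q, n_0)$.

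Correctness rests on two facts. The easy direction: by Theorem~\ref{maintech}, the true target $(k^*, d_0^*)$ has $\cM \cap \cP_q^{d_0^*}(k^*)$ containing matroids of every rank $\geq n_0$, hence of rank $b$, so the oracle test succeeds at $(k^*, d_0^*)$. The hard direction is that no $(k, d_0) \succ (k^*, d_0^*)$ in $D_c$ passes the test; this is precisely where the closure is indispensable, and boils down to the \emph{key claim}: if $\cM \cap \cP_q^{d_0}(k)$ contains a matroid of rank $\geq b$, then it already contains matroids of arbitrary rank. Combined with Theorem~\ref{maintech}'s uniqueness, the claim forces any denser $(k, d_0) \succ (k^*, d_0^*)$ passing the test to equal $(k^*, d_0^*)$, a contradiction.

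My strategy for the key claim, and the main obstacle, is as follows: given $M \in \cM \cap \cP_q^{d_0}(k)$ of rank $r \geq b$, apply the closure to produce $N = G \oplus_m M \in \cM$ with $G \cong \PG(n-1,q)$ arbitrarily large, where $G$ extends a spanning PG restriction $M|F \cong \PG(r-1,q)$ of $M$. Then $\elem(N) \geq (q^{r(N)}-1)/(q-1)$, which meets the density hypothesis of Lemma~\ref{maintechcompute} with initial pair $(0,0)$; that lemma yields a minor of $N$ in $\cM \cap \cP_q^{d'}(k')$ of any prescribed rank $\geq m$, for some $(k',d') \succ (0,0)$. Iterating closure-plus-extraction produces a strictly $\prec$-increasing chain in the finite set $D_c$, which must terminate at some pair $(k^+, d^+)$ whose class contains arbitrary-rank members of $\cM$. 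The principal difficulty is showing $(k^+, d^+) \succeq (k, d_0)$, so the chain does not drift to a weaker density class than where we began; this requires exploiting that $M$ appears as a restriction of every $N = G \oplus_m M$ in the iteration (so its density defect is preserved through minor extraction) and combining Lemma~\ref{maintechcompute}'s $\prec$-monotonicity with Theorem~\ref{maintech}'s uniqueness of the growth-rate target to pin down the terminus.
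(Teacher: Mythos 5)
Your skeleton is the right one and essentially matches the paper's: compute $c$ from Lemma~\ref{controlclass}, search over the finite set $D_c$ by testing whether $\cM$ contains a simple matroid in $\cP_q^{d_0}(k)$ of a fixed rank, and then let Theorem~\ref{maintech} finish. However, two of the details that you flag as "obstacles" are in fact genuine gaps, and your proposed way around them does not work.

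First, the testing rank. You test at rank $b$ and set $n_0 = f_{\ref{maintech}}(q,c,b)$. The paper tests at a rank $m$ that depends only on $q$ and $c$ (on the order of $58c^4$), and this is not incidental. Both the forward direction (a rank-$r_0$ example in $\cM\cap\cP_q^{d_0}(k)$ spawns examples of all larger ranks) and the application of Theorem~\ref{maintech} (passing from ``no rank-$\ge m$ member'' of $\cM\cap\cP_q^{d'}(k')$ to ``no rank-$m$ member'') rely on Lemma~\ref{genericpoint}: one can contract a generic point of a projection in $\cP_q^{d}(k)$ and keep the same defect, but only if the matroid has more than $q^{58k^4}$ points. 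If $b$ is small this fails, so testing at rank $b$ alone is not sound. The correct testing rank is $m$ chosen to exceed this genericity threshold (and also $\ge b$, so that the modular-sum closure applies).

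Second, and more seriously, your proof of the key claim is not a proof. You build $N = G\oplus_m M$, note $\elem(N) \ge \frac{q^{r(N)}-1}{q-1}$, apply Lemma~\ref{maintechcompute} with initial pair $(0,0)$, extract a minor in some $\cP_q^{d'}(k')$ with $(k',d')\succ(0,0)$, and iterate. But $G\oplus_m M$ for a large projective geometry $G$ has only a bounded number of elements above $\frac{q^{r(N)}-1}{q-1}$ — far fewer than a matroid in $\cP_q^{qd}(k)$ of the same rank would have. Lemma~\ref{maintechcompute} therefore only promises some $(k',d')$ with $k'\ge 1$, with no relation at all to $(k,d_0)$; the minor it produces need not contain $M$ as a restriction, and a restriction of a matroid in $\cP_q^{d'}(k')$ can have a worse defect. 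The iteration can strictly $\prec$-increase, but there is no mechanism forcing the terminus to be $\succeq (k,d_0)$, which is exactly the point you concede as the ``principal difficulty.'' The paper sidesteps this entirely: instead of iterating, it constructs the large-rank matroid in $\cM$ of the \emph{exact} size $\frac{q^{n+k}-1}{q-1}-qd$ directly. Starting from $M\in\cM\cap\cP_q^{qd}(k)$ of rank $m$, it lifts to the associated matroid $\wh M$ with $\wh M\con K = M$ and $\wh M\del K\cong\PG(m+k-1,q)$, picks a rank-$m$ flat $F$ skew to $K$, extends $\wh M|F$ to a much larger projective geometry $G$, forms $N=\wh M\oplus_F G$, then — using Lemma~\ref{genericpoint} and a counting argument (which is where $m\ge 58c^4$ is needed) — finds a $k$-element independent set $J$ of $\wh M\del K$ skew to $F$ with $M\con J\in\cP_q^{qd}(k)$. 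Then $\si(N\con(J\cup K))$ is computed to have precisely $\frac{q^{n+k}-1}{q-1}-qd$ points, lies in $\cM$ by the closure hypothesis and minor-closedness, and this gives the lower bound $h_{\cM}(n)\ge\frac{q^{n+k}-1}{q-1}-qd$ for all $n\ge m$ in one stroke. That explicit construction, rather than the iteration, is the missing idea; without it your argument does not locate the $\prec$-maximal pair correctly.
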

\begin{proof}
	Let $c = c_{\cM}$, noting that $c$ is computable from $q,\ell$ and $s$ by Lemma~\ref{controlclass}. Let $m = 58c^4$ and let $n_0 = f_{\ref{maintech}}(q,c,m)$. 
	
 Let $(k,qd) \in D_q$ be maximal with respect to $\prec$ such that $\cM$ contains a simple rank-$m$ matroid in $\cP_q^{qd}(k)$. Note that $k$ and $d$ can be determined with at most $2^{2^{q^{m+c}}}$ queries to a membership oracle for $\cM$, since every simple rank-$m$ matroid in $\cP_q(k)$ has at most $q^{m+k} \le q^{m+c}$ elements.	\begin{claim}
		$h_{\cM}(n) \ge \frac{q^{n+k}-1}{q-1}-qd$ for all $n \ge m$. 
	\end{claim}
	\begin{proof}[Proof of claim:]
		We may assume that $k > 0$. Let $M$ be a $k$-element projection of $\PG(m+k-1,q)$ so that $\si(M) \in \cM \cap \cP_q^{qd}(k)$. Let $\wh M$ be a rank-$(m+k)$ matroid and $K$ be a $k$-element independent set of $\wh M$ such that $\wh M \del K \cong \PG(m+k-1,q)$ and $\wh M \con K = M$. Let $F$ be a rank-$m$ flat of $\wh M$ that is skew to $K$ (so $M|F = \wh M |F \cong \PG(m-1,q)$), let $G \cong \PG(n+k-1,q)$ be a matroid with $\wh{M}|F$ as a restriction such that $E(G) \cap E(\wh M) = F$, and let $N = \wh M \oplus_{F} \wh G$. Now $\si(N \con K) \cong \si(M \oplus_F G) \in \cM$, since $m \ge b$.
		
		Let $J$ be a maximal independent set of $\wh M \del K$ that is skew to $F$ in $\wh M$, such that $M \con J \in \cP_q^{qd}(k)$. By skewness to $F$ we have $|J| \le k$; if $|J| < k$ then by maximality, every element of $x$ of $\wh M$ is in $\cl_{\wh M}(J \cup F)$, or satisfies $M \con x \notin \cP_q^{qd}(k)$. $\wh M$ has at most $\tfrac{q^{m+k-1}-1}{q-1}$ elements of the first type, and at most $q^{58c^4}$  of the second type by Lemma~\ref{projectiondensity}, but
		\[|\wh M| \ge \tfrac{q^{m+k}-1}{q-1} = \tfrac{q^{m+k-1}-1}{q-1} + q^{m+k-1} \ge \tfrac{q^{m+k-1}-1}{q-1} + q^{58c^4}, \] 
		a contradiction. So $|J| = k$. Now $M \con J \in \cP_q^{qd}(k)$ and $\wh M \con J$ is a matroid in which every element outside $K$ is parallel to an element of $F$; it follows that $(\wh M \con J)|(F \cup K)$ is a simplification of $\wh M \con J$, and $K$ is a rank-$k$ independent set of $\wh M \con J$ spanning no element of $F$. Moreover, $N \con J = G \oplus_m (M \con J)$; by modularity, the set $K$ spans no element of $G$ in $N \con J$, so no two elements of $G \del F$ are parallel in $ N \con (J \cup K)$. Thus
		\begin{align*}
			\elem(\wh N \con (J \cup K)) &= |G \del F| + \elem(\wh M \con (J \cup K))\\
			&= \tfrac{q^{n+k}-q^m}{q-1} + \left(\tfrac{q^{(m-k)+k}-1}{q-1} - qd\right)\\
			&= \tfrac{q^{n+k}-1}{q-1} - qd.
		\end{align*}
		Since $\si(N \con K) \in \cM$ and $r(N \con K) = n+k$, the matroid $\si(N \con (J \cup K))$ is a rank-$n$ matroid in $\cM$, and the claim follows. 
	\end{proof}
	
	Now, if $M \in \cM$ satisfies $r(M) \ge n_0$ and $\elem(M) \ge \frac{q^{r(M)+k}-1}{q-1}-qd$, then either $M \in \cP_q^{qd}(k)$, or $M$ has a minor of rank at least $m$ in $\cP_q^{d'}(k')$ for some $(k',d') \succ (k,d)$. Since $m = q^{58c^4} \ge q^{58(k')^4}$, the former possibility would imply by Lemma~\ref{projectiondensity} that $M$ has a rank-$m$ minor in $\cP_q^{d'}(k')$, contradicting the maximality of $(k,d)$. Therefore $M \in \cP_q^{qd}(k)$ for all such $M$; this implies the lemma. 
\end{proof}

The next lemma shows that, when we exclude some truncation of a projective geometry as a minor, the class of matroids without a given minor is closed under the modular sum operation of Lemma~\ref{computability}.

\begin{lemma}\label{sumexcludeminor}
	Let $q$ be a prime power, let $n,r,t,k \ge 0$ be integers with $n \ge r \ge k(t+1) + r(N)$, and let $N$ be a simple matroid. If $M$ is a rank-$r$ matroid with no $T(\PG(t,q))$-minor and no $N$-minor such that $M \in \cP_q(k)$, and $G \cong \PG(n-1,q)$ satisfies $G|Y = M|Y \cong \PG(r-1,q)$, where $Y = E(G) \cap E(M)$, then $G \oplus_m M$ has no $N$-minor. 
\end{lemma}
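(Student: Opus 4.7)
My plan is to proceed by induction on $n$. The base case $n = r$ is immediate: $|E(G)| = (q^n - 1)/(q - 1) = |Y|$ forces $E(G) = Y \subseteq E(M)$, hence $G \oplus_m M = M$, so no new $N$-minors can appear. For the inductive step, assume $n > r$ and, for contradiction, that $N = P/C \setminus D$ is a minor of $P := G \oplus_m M$. Set $A := E(G) - Y$ and $F := \cl_G((C \cap A) \cup Y)$, a flat of $G$ of rank $n_* \in [r, n]$, and split the analysis according to $F$.

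When $n_* = n$, extending a basis of $Y$ to a basis of $G$ inside $(C \cap A) \cup Y$ yields $A_1 \subseteq C \cap A$ of size $n - r$ that is skew to $Y$ in $G$. Since $Y$ is a modular flat of $G$, contraction along $A_1$ commutes with the modular sum: $P/A_1 = (G/A_1) \oplus_m M$. The matroid $G/A_1$ simplifies to $G|Y = M|Y$, with every element of $(E(G) - A_1) \setminus Y$ parallel to a unique element of $Y$, so $P/A_1$ simplifies to $M$, and the further minor $N$ is then a minor of $M$ up to parallels and loops---contradicting the hypothesis. When $n_* < n$, set $A' := E(G) - F \neq \varnothing$, and note $C \cap A' = \varnothing$. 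If $A' \subseteq D$, then $P \setminus A' = (G|F) \oplus_m M$ is a modular sum with the smaller projective geometry $G|F \cong \PG(n_* - 1, q)$, and the inductive hypothesis (applied with $n_* < n$, still at least $r$) forbids an $N$-minor of $(G|F) \oplus_m M$---again a contradiction. Otherwise $B := A' \cap E(N) \neq \varnothing$, and I claim this forces $M$ to contain a $T(\PG(t,q))$-minor.

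For the last subcase, I would first note that $\cl_P(C) \cap E(G) \subseteq F$, so every element of $B$ survives as a non-loop of $P/C$; then pass to the sub-modular-sum $P' := (G|\cl_G(B \cup Y)) \oplus_m M$ of rank $r + s$, where $s := r_G(B \cup Y) - r \ge 1$. Inside $P'$ the set $B$ contributes an independent rank-$s$ projective-geometry extension of $M|Y$ that survives in the minor. Lifting this extension through the representation $M = \wh M / K$ given by $M \in \cP_q(k)$, with $\wh M \setminus K \cong \PG(r + k - 1, q)$, and invoking the inequality $r \ge k(t+1) + r(N)$, I would locate inside $\wh M$ a rank-$(t+1)$ projective-geometry flat that is skew both to $K$ and to a rank-$r(N)$ part of $M$ supporting the $N$-minor; contracting a suitably chosen element of $K$ inside this flat then produces the desired $T(\PG(t,q))$-minor of $M$. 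This step is the main obstacle: the inequality $r \ge k(t+1) + r(N)$ is precisely calibrated so that, after reserving rank $r(N)$ for the $N$-minor and rank $k$ for the contraction coordinates $K$, there remain exactly $kt$ units of rank for a $\PG(t,q)$-flat; but verifying that the resulting truncation descends to a minor of $M$ (rather than only of $\wh M$), and that the various skewness conditions can be arranged simultaneously, requires a careful rank-counting argument through the modular-sum structure of $P'$ and the projection map $\wh M \to M$.
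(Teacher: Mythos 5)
The key difficulty is your third subcase ($B \neq \varnothing$), and it remains unresolved — you say so yourself, but the sketch you give also points in the wrong direction. A rank-$(t+1)$ projective-geometry flat of $\wh M$ that is \emph{skew to $K$} survives the contraction of $K$ unchanged: it contributes a $\PG(t,q)$-restriction of $M$, not a $T(\PG(t,q))$-restriction. Truncations in a matroid $M \in \cP_q(k)$ arise precisely from flats that are \emph{not} skew to $K$, i.e.\ flats that span one of the elements being contracted. So finding a PG-flat skew to $K$ (and to the $N$-minor's support) gets you nothing; the skewness requirements you write down are at odds with producing the truncation you seek. Likewise, the arithmetic "$k(t+1)+r(N)$ reserves $kt$ units for a $\PG(t,q)$-flat" does not line up with any argument of this shape.

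The paper's proof is structurally different and does not try to derive a $T(\PG(t,q))$-minor from a hypothetical $N$-minor of $P$ at all; rather it uses the no-$T(\PG(t,q))$-minor hypothesis up front. For each $x \in K$, the unique minimal flat $F_x$ of the ambient geometry $\wh M \del K$ spanning $x$ produces, upon contracting $x$, a principal truncation $T(\PG(r_{\wh M}(F_x)-1,q))$, and hence a $T(\PG(r_M(F_x)-1,q))$-restriction of $M$. The absence of $T(\PG(t,q))$-minors thus forces $r_M(F_x) \le t$, hence $r_{\wh M}(F_x) \le t+k$. Taking $F = \cl(\bigcup_x F_x)$ yields a bounded-rank flat with $K \subseteq \cl_{\wh M}(F)$, which gives a clean decomposition $\wh M = (\wh M \del K) \oplus_m (\wh M | (K \cup F))$. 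One then enlarges $\wh M \del K$ inside a bigger projective geometry $\wh G$, forms $M' = \wh G \oplus_m (\wh M|(K \cup F))$, and recovers $G \oplus_m M$ as a minor of $M' \con K$. A single rank count on any purported $N$-minor $N = (M' \con C \del D)\con K$ — namely $|C| \ge n - k - r(N)$ while $|C \cap F| \le r_{M'}(F)$ — gives a large sub-independent-set $C' \subseteq C$ skew to $F$, whence $\si(M' \con C') \cong \si(\wh M)$ and the $N$-minor localizes to $\wh M \con K = M$, a contradiction. Your induction on $n$ and the case-split on $\cl_G((C\cap A)\cup Y)$ are not present in the paper, and while your first two subcases are handled correctly, the heart of the lemma lies in the third, which needs the decomposition-via-$F_x$ idea rather than a search for new truncations.
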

\begin{proof}
	Let $K$ be a rank-$k$ independent flat of a matroid $\wh M$ so that $M = \wh M \con K$ and $\wh M \del K \cong \PG(r+k-1,q)$. For each $x \in K$, let $F_x$ be the unique minimal flat of $\wh M \del K$ spanning $x$. We have $(\wh M \con x)|F_x \cong T(\PG(r_{\wh M}(F_x)-1,q))$, and so $M|F_x$ has a $T(\PG(r_M(F_x)-1,q))$-restriction, implying that $r_M(F_x) \le t$ for each $x \in K$; therefore $r_{\wh M}(F_x) \le t+k$. If $F$ is the flat of $\wh M \del K$ spanned by $\cup_{x \in K}F_x$, then we thus have $r_{\wh M}(F) \le k(t+k)$ and $K \subseteq \cl_{\wh M}(F)$; note that $r_{\wh M}(F) \le r(\wh M) - r(N)$ by hypothesis.
	
	It follows from the construction of $F$ that 
	\[\wh M = (\wh M \del K) \oplus_m (\wh M|(K \cup F)) .\]
	Let $\wh G \cong \PG(n+k-1,q)$ be a matroid so that $\wh M \del K$ is a restriction of $\wh G$ to a rank-$(r+k)$ flat, and $G$ is a restriction of $\wh G$ to a rank-$n$ flat that intersects $E(\wh M)$ in a rank-$r$ flat that is skew to $K$ in $\wh M$. By choice of $\wh G$, we have 
	\[G \oplus_m Y \cong ((\wh G \oplus_m (\wh M|(K \cup F)) ) \con K)|(E(M) \cup E(G)). \]
	It therefore suffices to show that, if $M' = \wh G \oplus_m \wh M|(K \cup F)  $, then $M' \con K$ has no $N$-minor. Suppose that $N \cong (M' \con C \del D) \con K$, where $C$ is independent and $D$ is co-independent in $M'$. Now $r(N) = r(M') - |C| - r_{M'}(K)$ and so $|C| \ge n - k - r(N)$. Moreover, $|C \cap F| \le r_{M'}(F) \le kt$, so $|C-F| \ge n-k-r(N)-kt \ge n-r$, so $C$ contains an 
$(n-r)$-element independent set $C'$ of $M'$ that is skew to $F$ in $M'$. However $\si(M' \con C') \cong \si(\wh M)$, and $M = \wh M \con K$ has no $N$-minor, contradicting the fact that $(M' \con C') \con K$ has an $N$-minor.\end{proof}

	The next result, proved in ([\ref{sqf}], Theorem 3.4) will easily imply that matroids representable over given field are also closed under the modular sum operation. 
	
	\begin{theorem}\label{subfieldrepresentation}
		Let $\bF$ be a field with a $\GF(q)$-subfield. If $M$ is an $\bF$-representable matroid with a $\PG(n-1,q)$-restriction for some $n$, then $M$ has an $\bF$-representation so that every column in this restriction has entries only in $\GF(q)$.  
	\end{theorem}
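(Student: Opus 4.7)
My plan is to start from an arbitrary $\bF$-representation $A$ of $M$ (viewed as a matrix whose columns are indexed by $E(M)$) and perform a sequence of invertible row operations and column rescalings to drive every column indexed by the $\PG(n-1,q)$-restriction into the $\GF(q)$-subfield $\bF_0\subseteq\bF$. Neither operation changes the matroid represented, so the resulting matrix is still an $\bF$-representation of $M$.

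First I would choose a basis $B=\{b_1,\dotsc,b_n\}$ of the $\PG(n-1,q)$-restriction $G$ and perform row operations so that the columns for $B$ become the standard basis $e_1,\dotsc,e_n$. Then, for each $i\in\{2,\dotsc,n\}$, the line $\cl_G(\{b_1,b_i\})$ carries exactly $q+1$ points; I would pick one distinguished non-basis point $p_i$ on this line and rescale the $i$th row (equivalently, replace $e_i$ by $\lambda_i e_i$ for a suitable $\lambda_i\in\bF^\times$) so that $p_i$ becomes $e_1+e_i$. At this stage the only remaining degree of freedom is a global scalar on the $e_1$-row, which will be irrelevant.

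The central step is to show, plane by plane, that every column in $E(G)$ now lies in $\bF_0^n$. For a fixed pair $i,j\in\{2,\dotsc,n\}$, let $\Pi$ be the rank-$3$ flat $\cl_G(\{b_1,b_i,b_j\})\cong\PG(2,q)$. Define
\[
S_i \;=\; \{c\in\bF : e_1+ce_i \text{ is (a scalar multiple of) a column of the $\bF$-representation indexed by }E(G)\},
\]
so that $|S_i|=q-1$ and $1\in S_i$ by the normalisation above. The key sub-lemma is that $S_i\cup\{0\}$ is a subfield of $\bF$. Closure under addition and multiplicative inversion is extracted from triangle and collinearity relations in $\Pi$: a typical such relation says that in $\PG(2,q)$ the point $e_1+(a+b)e_i$ is collinear with specific points of the form $e_1+ae_i$, $e_1+be_i$, $e_j$ and auxiliary intersections, and these collinearities in $G$ translate into $\bF$-linear dependencies among the corresponding columns, which in turn force $a+b\in S_i\cup\{0\}$ (and similarly $ab\in S_i$). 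Since $S_i\cup\{0\}$ is then a $q$-element subfield of $\bF$, it must coincide with $\bF_0$. A similar but easier argument handles the lines $\cl_G(\{b_i,b_j\})$ (using the already-normalised third coordinate $b_1$ to witness the triangles), and a diagonal-intersection argument in $\Pi$ then shows that every one of the $q^2+q+1$ points of $\Pi$ has coordinates in $\bF_0$.

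Having normalised every rank-$3$ flat of $G$ through $b_1$, I would bootstrap to the full geometry: any point $x\in E(G)$ lies in a line with two points already shown to have coordinates in $\bF_0^n$, and the matroid collinearity (together with the uniqueness of the third point on a projective line given two representatives in $\bF_0^n$) forces the column of $x$ into $\bF_0^n$ as well. This handles the entire restriction. The main obstacle is the subfield step: one must be careful to exhibit explicit $\PG(2,q)$-configurations whose $\bF$-realisations encode the equations $a+b=c$ and $ab=c$ among coefficients, and to verify that no unexpected $\bF$-algebraic coincidences could enlarge $S_i$ beyond $q-1$ elements. Once that is done, uniqueness of the $\GF(q)$-subfield of $\bF$ (up to our fixed embedding) delivers the theorem.
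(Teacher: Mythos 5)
The paper does not include its own proof of this statement; it is quoted as Theorem 3.4 of reference [8] (Nelson, \emph{Growth rate functions of dense classes of representable matroids}). So there is no internal proof to compare against, and I can only assess your argument on its own merits.

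Your proposal is the classical coordinatization argument, and its skeleton is sound. Fixing a projective frame $e_1,\dotsc,e_n$, $e_1+e_i$ $(i\ge 2)$ by row operations and column rescalings is correct (after these normalisations the residual row freedom is a global scalar only). The central step is as you say: show that the set $S_i\cup\{0\}$ of $\bF$-scalars appearing on the line $\cl_G(\{b_1,b_i\})$ is closed under addition and multiplication. This is exactly the von Staudt / Pappian-plane coordinatization, and it does go through: the additive and multiplicative constructions use only intersections of lines of $G$, and since $G$ is combinatorially $\PG(2,q)$ every intersection of two of its lines is again a point of $G$. Closure, together with finiteness and characteristic $p$, forces $S_i\cup\{0\}$ to be the unique order-$q$ subfield of $\bF$. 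The bootstrap to $n>3$ via projecting a general point onto coordinate hyperplanes and using the uniqueness of the third point on a line through two $\GF(q)$-rational points is also fine, once you have all the ``unit'' points $e_i+e_j$ (which you get as intersections of already-determined lines). You have correctly identified where the real work is, even though you only gesture at the explicit incidence configurations encoding $a+b$ and $ab$; writing these out is routine but is the bulk of a full proof.

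One genuine gap is worth flagging: the argument as written establishes the stronger claim that \emph{every} $\bF$-representation of $M$ can be brought to the desired form by row operations and column scalings. That stronger claim fails outright when $n\le 2$, because $\PG(1,q)$ is just $U_{2,q+1}$, which for $q\ge 3$ admits $\bF$-representations with cross-ratios outside $\GF(q)$ that cannot be moved into $\GF(q)$ by projective transformations of the line; your argument has no access to auxiliary planes in that case. The theorem as stated must therefore either implicitly assume $n\ge 3$ or be proved differently in the degenerate cases $n\le 2$ (where some care is needed because one gets to choose the representation of $M$, not merely normalise a given one). For the way the paper uses the result, $n$ is always large, so this does not affect the application, but a careful writeup should either restrict to $n\ge 3$ or supply the separate low-rank argument.
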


We now restate and prove Theorem~\ref{compute}.

	\begin{theorem}
		Let $\cF$ be a finite set of finite fields and $\cO$ be a finite set of simple matroids. Let $\cM$ be the class of matroids in $\Ex(\cO)$ that are representable over all fields in $\cF$. If $\cM$ is base-$q$ exponentially dense and does not contain all truncations of $\GF(q)$-representable matroids, then there are computable nonnegative integers $k,d$ and $n_0$ such that $h_{\cM}(n) = \frac{q^{n+k}-1}{q-1} - qd$ for all $n \ge n_0$. 
	\end{theorem}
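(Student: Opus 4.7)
The plan is to verify effectively the three hypotheses of Lemma~\ref{computability} given the data $\cF, \cO$, and then invoke that lemma; the main obstacle will be checking the modular sum closure condition (the third hypothesis), which is where both the truncation assumption and Theorem~\ref{subfieldrepresentation} come into play.

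The first step is to compute $\ell \geq 2$ so that $U_{2,\ell+2} \notin \cM$, and $s \geq 2$ so that $\PG(s-1,q') \notin \cM$ for every $q' \in \{q+1,\dotsc,\ell\}$. Both integers exist by base-$q$ exponential density and the Growth Rate Theorem. They can be computed by naive search: for each candidate matroid, deciding membership in $\cM$ amounts to checking representability over each (finite) field in $\cF$, which is decidable by matrix enumeration, and checking for minors in the finite set $\cO$, which is decidable. Next, using the truncation hypothesis, we compute the least $t \geq 0$ with $T(\PG(t,q)) \notin \cM$. Such $t$ exists: if $T(\PG(t,q)) \in \cM$ for every $t$, then since every $\GF(q)$-representable matroid $M'$ is a restriction of some $\PG(r-1,q)$ and truncation commutes with restriction, $T(M')$ would be a restriction of $T(\PG(r-1,q)) \in \cM$, so $T(M') \in \cM$ for every $\GF(q)$-representable $M'$, contradicting the hypothesis. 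Since $\cM$-membership is decidable, this least $t$ is computable.

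The hard part is verifying the modular sum condition. We only need to handle $k \leq c_{\cM}$, and $c_{\cM}$ is bounded computably by Lemma~\ref{controlclass} in terms of $q,\ell,s$; set $b = c_{\cM}(t+1) + \max_{N \in \cO} r(N)$. Let $M \in \cM \cap \cP_q(k)$ with $r(M) \geq b$, let $G \cong \PG(n-1,q)$, and suppose $E(G) \cap E(M) = F$ with $G|F = M|F \cong \PG(r(M)-1,q)$. We must show $G \oplus_m M \in \cM$. For each $N \in \cO$, the matroid $M$ has no $N$-minor (it lies in $\cM = \Ex(\cO)$) and no $T(\PG(t,q))$-minor (because $T(\PG(t,q)) \notin \cM$ and $\cM$ is minor-closed); since $r(M) \geq b \geq k(t+1) + r(N)$, Lemma~\ref{sumexcludeminor} gives that $G \oplus_m M$ has no $N$-minor, so $G \oplus_m M \in \Ex(\cO)$. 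For representability, observe that every $\bF \in \cF$ must contain $\GF(q)$ as a subfield, because $\PG(r-1,q) \in \cM$ is $\bF$-representable for every $r$. By Theorem~\ref{subfieldrepresentation}, $M$ admits an $\bF$-representation in which the columns indexed by $F$ have entries only in $\GF(q)$; gluing this with a standard $\GF(q)$-representation of $G$ that agrees with it on $F$ produces an $\bF$-representation of $G \oplus_m M$. All three hypotheses of Lemma~\ref{computability} are thereby verified with computable parameters, and the conclusion provides the desired $k, d, n_0$.
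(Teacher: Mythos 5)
Your proposal is correct and follows essentially the same route as the paper: compute $\ell$, $s$, and $t$, bound $c_{\cM}$ via Lemma~\ref{controlclass}, verify the modular-sum closure hypothesis of Lemma~\ref{computability} by combining Lemma~\ref{sumexcludeminor} (for the $\Ex(\cO)$ part) with Theorem~\ref{subfieldrepresentation} (for $\bF$-representability), and then invoke Lemma~\ref{computability}. Two small remarks. First, where the paper exhibits explicit choices of $\ell$, $s$, and $t$ (for instance, when $\cF \neq \varnothing$ the paper observes that $T(\PG(f,q))$ contains a $U_{f,f+2}$-restriction for $f = |\bF|$, which is not $\bF$-representable), you instead argue existence abstractly and compute by exhaustive search against a decidable membership oracle; both are valid since membership in $\cM$ is decidable by matrix enumeration over the finite fields in $\cF$ together with minor-checking against $\cO$. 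Second, your phrase ``truncation commutes with restriction'' is only correct for \emph{spanning} restrictions; the truncation of a non-spanning restriction of $\PG(r-1,q)$ is generally not the corresponding restriction of $T(\PG(r-1,q))$. Since every $\GF(q)$-representable matroid of rank $r_0$ embeds as a spanning restriction of $\PG(r_0-1,q)$, the argument you intend does go through once one takes the ambient rank to equal $r(M')$, but you should say ``spanning'' explicitly to make the step precise.
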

	\begin{proof}
		Since $\cM$ is base-$q$ exponentially dense, there is some matroid $U_{2,\ell+2}$ that is either in $\cO$, or not representable over a field in $\cF$; clearly $\ell$ is computable given $\cF$ and $\cO$. Now $q$ is the largest prime power so that $\GF(q)$ is a subfield of all fields in $\cF$ and  $\cO$ contains no $\GF(q)$-representable matroid. If $\cF \ne \varnothing$, then for each $\bF \in \cF$, the matroid $T(\PG(|\bF|,|\bF|))$ has a $U_{|\bF|,|\bF|+2}$-restriction so is not in $M$. If $\cF = \varnothing$ then $\cO$ must contain some minor of a truncation of a projective geometry over $\GF(q)$ and must therefore contain a spanning restriction of such a truncation. Therefore $T(\PG(t,q)) \notin \cM$, where $t$ is either the minimum size of a field in $\cF$, or the maximum rank of a matroid in $\cO$ if $\cF$ is empty. Finally, since $\cM$ is base-$q$ exponentially dense, it is easy to check that we have $\PG(s-1,q') \notin \cM$ for every prime power $q' \in \{q+1,\dotsc,\ell\}$, where $s$ is the maximum rank of a matroid in $\cO$. 
		
		By Lemma~\ref{controlclass}, we can compute the constant $c_{\cM}$. To show that $n_0$, $k$ and $d$ are computable for $\cM$, it suffices to show that there exists $b \ge 0$ so that $\cM$ is closed under the modular sum operation in Lemma~\ref{computability}. Since each $\bF \in \cF$ has a $\GF(q)$-subfield, it is clear by Lemma~\ref{subfieldrepresentation} that we can adjoin some $\bF$-representation of a matroid $M \in \cM$ to a $\GF(q)$-representation of a projective geometry $G$ to obtain an $\bF$-representation of their modular sum, so the sum operation preserves $\bF$-representability. Since $k \le c_{\cM}$, the fact that $\cM$ is itself closed under the sum operation for $b = \max\{r(N) + c_{\cM}(t+1)\colon N \in \cO\}$,
follows from Lemma~\ref{sumexcludeminor}. Since it is easy to decide membership of a given matroid in $\cM$, Lemma~\ref{computability} now gives the theorem. 
		\end{proof}
		
	Excluding a truncation as a minor is fundamental to the proof of Lemma~\ref{sumexcludeminor} and therefore to Theorem~\ref{compute}. Conjecture~\ref{undecidable} claims that without this exclusion, it is impossible to compute the growth rate function in general for a class defined by excluded minors. Our motivation for this conjecture is the drastic difference between the complexity of describing a $k$-element projection with and without excluding some truncation as a minor, which we now outline. 
	
	The proof of Lemma~\ref{sumexcludeminor} uses the fact that, for $n \ge k(t+k)$, every $k$-element extension of $\PG(n+k-1,q)$ with no $T(\PG(t,q))$-minor is in fact the modular sum of $\PG(n+k-1,q)$ and some $k$-element extension of $\PG(k(t+k)-1,q)$. Each such extension has at most $q^{k(t+k)}$ elements, and thus the number of nonisomorphic $k$-element extensions of $\PG(n+k-1,q)$ with no $T(\PG(t,q))$-minor (and therefore the number of rank-$n$ matroids in $\cP_q(k)$ with no $T(\PG(t,q))$-minor) is at most $2^{2^{q^{k(t+k)}}}$, a bound independent of $n$.
	
	 On the other hand, let $G \cong \PG(n+1,q)$, let $\cF$ be the set of rank-$n$ flats of $G$, and let $\cF' \subseteq \cF$. It is routine to show that there is a unique two-element extension $M = M_{\cF'}$ of $G$ by elements $x_1$ and $x_2$ such that
	 \begin{itemize}
	 \item  $M \del x_i$ is a free extension of $G$ for each $i \in \{1,2\}$,
	 \item  $\{x_1,x_2\}$ is skew to every flat of rank less than $n$ in $G$, and is spanned by no hyperplane of $G$, and
	 \item  the set of rank-$n$ flats of $G$ skew to $\{x_1,x_2\}$ is exactly $\cF'$.
	 \end{itemize}
	 Since different sizes of $\cF'$ correspond to nonisomorphic matroids, this gives at least $|\cF| = \tfrac{(q^{n+2}-1)(q^{n+1}-1)}{(q^2-1)(q-1)} > q^{2n}$ nonisomorphic two-element extensions of $G$. (Actually the number of nonisomorphic $M$ is the number of orbits of the action of $\mathrm{Aut}(G)$ on $2^{\cF}$, which seems likely to be doubly exponential in $n$.) These extensions will correspond to nonisomorphic rank-$n$ matroids in $\cP_q(2)$, and their abundance markedly contrasts the constant number we get when excluding a truncation.
	 
	 The complexity of even these two-element extensions leads us to believe that one can perhaps encode undecidable problems in the form of minor-testing on two-element projections of projective geometries; this motivates Conjecture~\ref{undecidable}.  
\section*{References}
\newcounter{refs}
\begin{list}{[\arabic{refs}]}
{\usecounter{refs}\setlength{\leftmargin}{10mm}\setlength{\itemsep}{0mm}}

\item\label{bry}
T. Brylawski, 
Modular constructions for combinatorial geometries,
Trans. Amer. Math. Soc. 203 (1975), 1--44.

\item\label{gkpaper}
J. Geelen, K. Kabell,
Projective geometries in dense matroids, 
J. Combin. Theory Ser. B 99 (2009), 1--8.

\item\label{gkw}
J. Geelen, J.P.S. Kung, G. Whittle,
Growth rates of minor-closed classes of matroids,
J. Combin. Theory Ser. B 99 (2009), 420--427.

\item\label{oldgrfpaper}
J. Geelen, P. Nelson, 
On minor-closed classes of matroids with exponential growth rate, 
Adv. Appl. Math. 50 (2013), 142--154. 

\item\label{dhjpaper}
J. Geelen, P. Nelson, 
A density Hales-Jewett theorem for matroids, 
J. Combin. Theory Ser. B, to appear. 

\item\label{kungroundness}
J.P.S. Kung, 
Numerically regular hereditary classes of combinatorial geometries,
Geom. Dedicata 21 (1986), no. 1, 85--10.

\item\label{polymath}
D.H.J. Polymath,
A new proof of the density Hales-Jewett theorem,
arXiv:0910.3926v2 [math.CO], (2010) 1-34.

\item\label{sqf}
P. Nelson,
Growth rate functions of dense classes of representable matroids, 
J. Combin. Theory Ser. B 103 (2013), 75--92.

\item \label{oxley}
J. G. Oxley, 
Matroid Theory, Second Edition,
Oxford University Press, New York (2011).
\end{list}		
\end{document}